\RequirePackage{xcolor}
\documentclass[journal,twoside,web]{ieeecolor}
\usepackage{generic}
\usepackage{cite}
\usepackage{amsmath,amssymb,amsfonts}
\usepackage{algorithmic}
\usepackage{graphicx}
\usepackage[hidelinks]{hyperref}
\usepackage{textcomp}
\usepackage{systeme}
\usepackage{amsmath}
\usepackage{amssymb}
\usepackage{dsfont}
\usepackage{algorithm}
\usepackage{booktabs}
\usepackage[caption=false,font=footnotesize]{subfig}
\usepackage{tikz}
\usetikzlibrary{arrows.meta,calc,positioning}

\newcommand{\GenericHyperbolicVertex}[1]{%
\begin{scope}[shift={#1}]
  \draw[red,very thick,->] (0,0.7) -- (4,0.7);
  \draw[blue,very thick,<-,] (0,-0.7) -- (4,-0.7);

  \draw[blue,>=stealth,thick]
  (-0.45,0) arc (-180:-135:1.0);
  \draw[blue,>=stealth,->,thick]
  (-0.45,0) arc (-180:-225:1.0);
  \node[blue,left] at (-0.45,0) {$K_{(i,+1),(i,+1)}$};

  \draw[red,>=stealth,thick]
  (4.45,0) arc (0:45:1.0);
  \draw[red,>=stealth,->,thick]
  (4.45,0) arc (0:-45:1.0);
  \node[red,right] at (4.45,0) {$K_{(i,-1),(i,-1)}$};

  \draw[dashed,<-] (1.4,-0.7) -- (1.4,0.7);
  \draw[dashed,->] (2.6,-0.7) -- (2.6,0.7);
  \node[left] at (1.4,0) {$\Sigma_i^-$};
  \node[right] at (2.6,0) {$\Sigma_i^+$};

  \node[red,above] at (2,0.7) {$w_i^+(t,x)$};
  \node[blue,below] at (2,-0.7) {$w_i^-(t,x)$};

  \node[above left]  at (0,0.7) {$r_i^{+1}$};
  \node[below left]  at (0,-0.7) {$y_i^{+1}$};

  \node[above right] at (4,0.7) {$y_i^{-1}$};
  \node[below right] at (4,-0.7) {$r_i^{-1}$};
\end{scope}
}

\newcommand{\PortVertex}[4]{%
\begin{scope}[shift={({#1},{#2})},scale=#4]
  \draw[thick,fill=white] (0,0) circle (0.45);
  \node at (0,0) {$#3$};

  \filldraw[black,fill=white,thick] (-0.55,0) circle (0.10);
  \filldraw[black,fill=white,thick] ( 0.55,0) circle (0.10);

  \node[above] at (-0.80,0.10) {$p_{#3}^{+1}$};
  \node[above] at ( 0.90,0.10) {$p_{#3}^{-1}$};

\end{scope}
}

\usepackage{mathtools}

\newtheorem{thm}{Theorem}[section] \newtheorem{assum}{Assumption}[section] \newtheorem{prop}{Proposition}[section] \newtheorem{defn}{Definition}[section]  \newtheorem{rem}{Remark}[section]   \newtheorem{corollary}{Corollary}[section] \newtheorem{lem}{Lemma}[section]

\newcommand{\ind}{\mathds{1}}
\DeclareMathAlphabet{\mathbbb}{U}{bbold}{m}{n}
\newcommand{\qhat}{\widehat q}
\newcommand{\phat}{\widehat p}
\newcommand{\rank}{\operatorname{rank}}
\renewcommand{\leq}{\leqslant}
\renewcommand{\le}{\leqslant}
\renewcommand{\geq}{\geqslant}

\newcommand{\mycomment}[1]{}
\newcommand{\Ntarget}{N_0}

\newcommand{\Nhat}{\widehat{N}}

\newcommand{\Mhat}{\widehat{M}}
\newcommand{\col}{\operatorname{col}}
\newcommand{\diag}{\operatorname{diag}}
\newcommand{\Bgraph}{B_g}

\newcommand{\R}{\mathbb{R}}
\newcommand{\N}{\mathbb{N}}

\newcommand{\C}{\mathbb{C}}

\newcommand{\Lap}[1]{\widehat{#1}}
\newcommand{\dmax}{\Bar{\tau}}
\newcommand{\infpp}{\delta_0}
\newcommand{\dynavar}{Y}
\newcommand{\dynaopvar}{\mathfrak A}
\newcommand{\dynaopstatevar}{\mathfrak B}
\newcommand{\feedbackdynaop}{\mathcal P}
\newcommand{\spaceY}{\mathcal Y}
\newcommand{\normyinit}{\mathcal Y}
\newcommand{\gains}{\mathcal G}
\newcommand{\graph}{\Gamma}
\newcommand{\gainspace}{\mathfrak{G}}

\newcommand{\statespace}{\mathcal{X}}

\renewcommand{\star}{*}

\DeclareMathOperator{\myRe}{Re}
\DeclareMathOperator{\myIm}{Im}
\renewcommand{\Re}{\myRe}
\renewcommand{\Im}{\myIm}
\def\BibTeX{{\rm B\kern-.05em{\sc i\kern-.025em b}\kern-.08em
    T\kern-.1667em\lower.7ex\hbox{E}\kern-.125emX}}

\begin{document}
\title{Stabilization of 1D Linear Hyperbolic Balance Laws by Integral Difference Control and Application to Networks Stabilization}
\author{Adam Braun, Jean Auriol, Lucas Brivadis
\thanks{This project has received funding from the Agence Nationale de la Recherche (ANR) via grant PANOPLY ANR-23-CE48-0001-01.}
\thanks{All authors are with Universit\'e Paris-Saclay, CNRS, CentraleSup\'elec, Laboratoire des signaux et syst\`emes, 91190, Gif-sur-Yvette, France. {\tt\small firstname.lastname@centralesupelec.fr}
}
}
\maketitle
\thispagestyle{empty}
\begin{abstract}
This paper develops a unified method for the exponential stabilization of first-order linear hyperbolic balance laws under general actuation, including both underactuated boundary and in-domain control. The proposed framework brings together a wide range of underactuated configurations within a single formulation and substantially extends existing results restricted to particular actuation settings.
Using cutting and folding transformations, the proposed approach is further applied to networks of hyperbolic balance laws, including configurations with cycles. The control design is developed under a stabilizability condition and a robustness assumption. It is based on an invertible backstepping transformation, which partially decouples the system, followed by a reformulation of the stabilization problem at the level of an Integral Difference Equation (IDE). The gains of the resulting dynamic feedback law are constructed at the IDE level by combining a stable rank-reduction procedure, which reduces the problem to a single-input design, with the solution of an interpolation equation arising from a Corona problem. Numerical simulations are presented for a relevant cycle network that cannot be addressed by existing methods in the literature.
\end{abstract}

\section{Introduction}
\label{sec:introduction}
Hyperbolic Partial Differential Equations (PDEs) constitute a fundamental class of models for systems in which information, mass, momentum, or energy propagates at finite speed. Owing to this intrinsic transport structure, they arise naturally in the description of many large-scale engineering systems, such as open-channel hydraulic networks~\cite[Chapter 8]{bastin_coron}, traffic flow models~\cite{YuBook2023}, gas transportation networks, and drilling processes~\cite{Aamo2016,auriol2020closed}. In many of these applications, actuation is exerted through the boundaries of the spatial domain. This is the case, for instance, in ramp-metering strategies for traffic regulation~\cite{YuBook2023,zhang2024mean} and in the suppression of torsional vibrations in drilling systems through boundary actuation~\cite{Aarsnes2018b,auriol2022comparing}. 
{A large part of the literature considers configurations where full actuation is available at one boundary, as in~\cite{bastin_coron}.}
{However, several relevant configurations~\cite{Jean_Adam_Ouidir_Mathieu} involve underactuated boundary control and control actions distributed inside the domain.} Examples include tubular chemical reactors, and in particular plug-flow reactors, where the jacket temperature acts as a spatially distributed manipulated variable~\cite{orlov2002discontinuous,pitarch2016distributed}. Distributed actuation also appears in fluid-filled tanks modeled by Saint-Venant equations, where the force applied to the tank enters the dynamics as a distributed scalar input~\cite{dubois1999motion}. More recently, similar control structures have been identified in mixed traffic flow models involving both manually driven and adaptive cruise control equipped vehicles, where the time-gap parameter of connected vehicles can be interpreted as a distributed input along the road segment~\cite{bekiaris2020pde}.

Among constructive methods, backstepping has become one of the most prominent tools for the stabilization of hyperbolic PDEs~\cite{krstic2008boundary}. The central idea is to transform the original system, through an appropriate Volterra integral change of variables, into a target system with prescribed stability properties~\cite{krstic2008boundary,redaud2024domain}. This framework has led to exponential and finite-time stabilization results for many classes of coupled hyperbolic systems~\cite{auriol2016minimum,coron2017finite,coron2013local}. It has also been extended to robust control~\cite{auriol2020robust}, finite-time regulation~\cite{deutscher17}, and adaptive control~\cite{Anfinsen_book2019}. More recent contributions have considered interconnected structures, including PDE-ODE cascades and networks of coupled PDEs~\cite{auriol2024output,deutscher2019output,di2018stabilization,irscheid2021observer,redaud2024output}. Recent developments in backstepping have also introduced approaches based on Fredholm transformations~\cite{deutscher2019fredholm,CoronLu2015Fredholm,redaud2022stabilizing,gagnonbackstepfred,GagnonHayatMarxXiangZhang2026}. These transformations offer greater flexibility than classical Volterra transformations, but their invertibility is not automatic. To overcome this difficulty, much of the existing literature assumes that the differential operator arising in the abstract formulation of the PDE is diagonalizable in a Riesz basis, which provides a sufficient framework for proving the invertibility of the Fredholm transformation. 
A detailed account of recent developments in PDE backstepping can be found in the survey~\cite{vazquez2024backstepping}.

{We establish a unified exponential stabilization methodology for first-order linear hyperbolic balance laws that covers the full range of partial actuation configurations, whether located at the boundary, within the spatial domain, or both.}
This flexibility makes it possible to treat a variety of underactuated configurations and thereby extends several existing results devoted to specific cases of interest. These include, among others, the classical setting of full actuation at one boundary, extensively studied through backstepping methods~\cite{auriol2016minimum,coron2017finite}; underactuated boundary control problems~\cite{Jean_Adam_Ouidir_Mathieu}; two-sided actuation frameworks as considered in~\cite{auriol2018two}; and in-domain control problems, recently investigated in the scalar case in~\cite{AuriolSIAM}.

Furthermore, by relying on cutting and folding transformations in the spirit of~\cite{folding_vazquez, auriol2022folding}, we show that broad classes of networks (including \emph{cycles}) of first-order hyperbolic balance laws can be transformed into the general system considered in this work. This provides a direct extension of previous contributions on the stabilization of hyperbolic PDE networks~\cite{auriol2024output, auriol_hdr, braun_stab_3_2inputs, braun2025stabilizationchainhyperbolicpdes}.
This is of particular interest given the relevance of PDE networks in the modeling of many real-world systems~\cite{HAYAT2026103942, ammari2025graphgeometriccontrolcondition, Nicaise, Jacek}. Most existing works on this topic are limited to tree-like structures (see e.g.~\cite{AMMARI201939,baudouin2015inverse}) and assume that no cycles are present in the underlying graph. The method developed in this paper partially overcomes this limitation by imposing instead a stabilizability condition on the network.

Under a stabilizability condition and a necessary robustness assumption, we establish exponential stability of the closed-loop system by constructing a dynamic state-feedback control law. The methodology proceeds as follows. First, inspired by~\cite{auriol2019explicit}, the original system is mapped, through an invertible backstepping transformation, into a partially decoupled target system. Then, inspired by~\cite{auriol_hdr}, we establish that exponential stability of this target PDE system is equivalent to exponential stability of an associated Integral Difference Equation (IDE). Building on recent developments in~\cite{braun_corona}, the stabilizing gains are constructed at the IDE level. More precisely, a stable rank-reduction procedure is applied to reduce the problem to a single-input control problem while preserving stabilizability properties; see also~\cite{quadrat2004general} for related algebraic rank conditions over rings of delay operators. The gains are then obtained \{following the methodology introduced in~\cite{braun_corona} in the case of scalar input, by solving an interpolation equation arising from a Corona problem~\cite{Carleson1962}.

The article is organized as follows. Section~\ref{section:problem_formulation} introduces the class of systems considered in this work, discusses how this formulation generalizes previous contributions, and states the stabilization objective and the main assumptions. Section~\ref{section:control_design} presents the backstepping target system and the proposed dynamic state-feedback control law, before stating the main result. Section~\ref{section:IDE} reformulates the stabilization problem as an IDE, while Section~\ref{section:design_command_law} describes the construction of the stabilizing gains at the IDE level. Section~\ref{section:network} applies the proposed framework to networks of linear hyperbolic balance laws by means of cutting and folding transformations, relying in particular on an associated signed graph~\cite{harary1953notion_graph}. Section~\ref{section:numerics} is devoted to numerical simulations of an underactuated cycle network controlled through one boundary input and one in-domain input.  The technical proofs are collected in the appendices. Appendix~\ref{appendix:notations} summarizes the notation used throughout the paper.
\smallskip

\noindent
\textbf{Notation.}
$\mathbb{R}_{>0}^n$ denotes $(0,+\infty)^n$.
$\operatorname{Id}_d$ denotes the $d\times d$ identity matrix.
$L^2$ denotes the space of square-integrable functions.
$H^1$ denotes the first-order Sobolev space.
$C((0,\infty),B)$ denotes the space of continuous functions from $(0,\infty)$ to a Banach space $B$.
When properly defined, $\Lap N$ denotes the Laplace transform of $N$.
$\mathbb{C}_\omega$ denotes the half-plane $\{s\in\mathbb{C}:\operatorname{Re}(s)>-\omega\}$, where $\omega>0$. The adjoint of an operator or a matrix $A$ is denoted by $A^\star$. For any $\omega>0$ the weighted space $L^2_\omega((0,\infty),\R)$ is the space of real-valued measurable functions such that
$
\|f\|_{L^2_\omega} := \left(\int_0^\infty e^{2\omega t}|f(t)|^2 dt\right)^{1/2}<\infty.$ {Let $V$ be a vector space. We denote by $V'$ its algebraic dual, and by $\left\langle \cdot, \cdot\right\rangle_{V', V}$ the associated duality bracket.}
All important variable notations are summarized in Appendix~\ref{appendix:notations}.
\section{Problem Formulation}
\label{section:problem_formulation}
\subsection{System considered}
We consider general systems of linear hyperbolic balance laws defined for  all $t>0$ and all $x\in [0,1] $ by
\begin{equation} \label{eq:hyperbolic_couple}
\left\{
\begin{aligned}
&\partial_tw(t, x)+\Lambda \partial_xw(t, x)=\Sigma(x) w(t,x) + h(x)U(t), \\
&w^+(t, 0)=Q w^-(t, 0) + B_0U(t),\\
&w^-(t, 1)=R w^+(t, 1)+B_1U(t),
\end{aligned}
\right.
\end{equation}
where $w=\col({w^+}, {w^-})\in\mathbb{R}^{n+m}$ denotes the state, with $n, m \in \N^*$. The components $w^+ \in\mathbb{R}^{n}$ and $w^- \in\mathbb{R}^{m}$ represent the rightward and leftward propagating states, respectively. The diagonal velocity matrix is
\[
\Lambda=\operatorname{diag}(\Lambda^+, -\Lambda^-), 
\]
with 
\[
\Lambda^+ = \operatorname{diag}(\lambda_1,\dots,\lambda_{n}), \quad \Lambda^- = \operatorname{diag}(\mu_1,\dots,\mu_m),
\]
and ordered entries $-\mu_m<\cdots<-\mu_1<0<\lambda_1<\cdots<\lambda_{n}$. 
The case of isotachic blocks (i.e., multiple states sharing the same transport speed) is not treated here, but the techniques of~\cite{hu2016control,vazquez2024backstepping} can be used to rewrite such systems in the present framework.
The continuous matrix-valued function $\Sigma$ captures in-domain couplings and is partitioned as
\begin{equation}
    \Sigma(x)=\begin{pmatrix}
        \Sigma^{++}(x)&  \Sigma^{+-}(x)\\[1mm]
        \Sigma^{-+}(x) &  \Sigma^{--}(x)
    \end{pmatrix},
\end{equation}
with $\Sigma^{++}(x) \in \mathbb{R}^{n\times n}$, $\Sigma^{+-} (x)\in \mathbb{R}^{n\times m}$, $\Sigma^{-+} (x)\in \mathbb{R}^{m\times n}$, and $\Sigma^{--} (x)\in \mathbb{R}^{m\times m}$.
Following~\cite{hu2016control}, we assume without loss of generality that $\Sigma_{ii}=0$ for all $1\leq i\leq n+m$.
The matrices $Q\in\mathbb{R}^{n\times m}$ and $R\in\mathbb{R}^{m\times n}$ correspond to constant boundary couplings. The control input $U(t)\in \mathbb{R}^{d}$ acts on the plant~\eqref{eq:hyperbolic_couple} through the boundary via $B_0 \in \mathbb{R}^{n\times d}$ and $B_1 \in \mathbb{R}^{m\times d}$, and through the domain via the continuous matrix-valued function $h:[0,1]\to\mathbb{R}^{(n+m)\times d}$, partitioned as
\begin{equation}
    h(x)=\begin{pmatrix}
        h^+(x)\\[1mm]
        h^-(x) 
    \end{pmatrix},
\end{equation}
with $h^+(x) \in \R^{n \times d}$ and $h^-(x) \in \R^{m \times d}$. 

The formulation~\eqref{eq:hyperbolic_couple} is deliberately general, extending the classical boundary-control setting for heterodirectional hyperbolic systems in several directions. It encompasses, as special cases, multiple configurations studied in the literature:
\begin{itemize}
  \item \emph{Full actuation at one boundary.}
    Setting $h=0$, $B_0=0$, $d=m$, and $B_1=\operatorname{Id}_m$ yields a boundary stabilization problem with full actuation of all incoming characteristics at the right boundary, broadly studied  via backstepping~\cite{auriol2016minimum,coron2017finite}.

  \item \emph{Underactuated boundary control.} Setting $h=0$ and $B_0=0$ with $d<m$ corresponds to an underactuated boundary configuration~\cite{Jean_Adam_Ouidir_Mathieu}.

  \item \emph{Two-sided actuation.}
    Setting $h=0$, $d=n+m$, $B_0 = \bigl[\operatorname{Id}_n \;\big|\; 0_{n\times m}\bigr]$,
    and $B_1 = \bigl[0_{m\times n} \;\big|\; \operatorname{Id}_m\bigr]$
    leads to the two-sided actuation framework treated in~\cite{auriol2018two}.

  \item \emph{In-domain actuation.} Setting $B_0=0$ and $B_1=0$ yields a purely in-domain control problem, recently studied in the scalar case in~\cite{AuriolSIAM}.
\end{itemize}
Beyond these special cases, the formulation~\eqref{eq:hyperbolic_couple} accommodates underactuated mixed configurations, in which a single finite-dimensional input acts simultaneously through the left boundary, the right boundary, and the distributed domain, possibly in a partial or redundant manner. Such configurations lie beyond the reach of classical backstepping designs, which typically require one-sided or fully actuated boundary conditions. This level of generality is motivated by the network application developed in Section~\ref{section:network}, where cutting and folding transformations naturally yield boundary and distributed input structures that fall outside the classical fully actuated framework. The present approach therefore applies to a broad class of hyperbolic PDE networks and extends the scope of existing works~\cite{auriol_hdr,braun_stab_3_2inputs,braun2025stabilizationchainhyperbolicpdes}.

Defining the state space $\statespace:=L^2((0,1),\R^{n+m})$,
system~\eqref{eq:hyperbolic_couple} can be formulated in the following abstract form
\begin{equation}
\label{eq:edp_coupled_operator_formuled}
\frac{d}{dt}w(t) = \mathcal A w(t) + \mathcal B U(t),
\end{equation}
where the operator~$\mathcal{A}:D(\mathcal{A}) \subset \statespace \to \statespace$ is defined by
\[
\mathcal A :w \longmapsto -\Lambda w_x+\Sigma(x)w,
\]
with domain
\begin{align*}
D(\mathcal A):=\{
&w=(w^+, w^-)\in H^1\big((0,1);\mathbb R^{n+m}\big)\;:\;\\
&
w^+(0)=Qw^-(0),~ w^-(1)=Rw^+(1)
\}.
\end{align*}
Its adjoint $\mathcal{A}^\star:D(\mathcal{A}^\star) \subset \statespace \to \statespace $ is given by 
\[
\mathcal A^\star: z \longmapsto \Lambda z_x+\Sigma(x)^\star z,\quad \forall z\in D(\mathcal A^\star)
\]
with domain
\begin{align*}
&D(\mathcal A^\star):=
\{
z=(z^+, z^-)\in H^1\big((0,1);\mathbb R^{n+m}\big)\;:\;\\
&\Lambda^- z^-(0)=Q^\star \Lambda^+ z^+(0),~
\Lambda^+ z^+(1)=R^\star \Lambda^- z^-(1)
\}.
\end{align*}
 {The operator $\mathcal{B}: \mathbb{R}^p \to D(\mathcal A^\star)'$ is defined through its adjoint}  $\mathcal{B}^\star: D(\mathcal{A}^\star) \to  \mathbb{R}^p$:
\begin{align*}
&\mathcal B^\star: z \mapsto
B_0^\star\Lambda^+ z^+(0)
+
B_1^\star\Lambda^- z^-(1)
\\
&+
\int_0^1 \left(h^+(x)^\star z^+(x)+h^-(x)^\star z^-(x)\right)\,dx, \quad \forall z\in D(\mathcal A^\star).
\end{align*}

{The well-posedness of the closed-loop system associated to~\eqref{eq:hyperbolic_couple} will be shown once the control law $U(t)$ is properly introduced.}

\subsection{Stabilization Objective}
{The objective is to design a control law that exponentially stabilizes system~\eqref{eq:hyperbolic_couple}. Since attempts based on static feedback did not prove successful, we instead propose a dynamic state-feedback control law.
More precisely, we aim to find  a Banach space $\spaceY$ with norm $\|\cdot\|_{\normyinit}$, a variable $\dynavar$, operators $\dynaopstatevar, \dynaopvar, \feedbackdynaop$  {defined on dense subsets of $\spaceY$} or $\statespace$ with
\begin{equation}
    \label{eq:feedback_abstract_form}
    \dot \dynavar(t) = \dynaopvar \dynavar(t)+ \dynaopstatevar w(t),  \quad U(t) = \feedbackdynaop\dynavar(t),
\end{equation}
and such that the closed-loop~\eqref{eq:hyperbolic_couple} and \eqref{eq:feedback_abstract_form} is exponentially stable in the following sense.
\begin{defn}[$L^2$-exponential stability] \label{Def_stability}
Provided it exists, the zero solution of~\eqref{eq:hyperbolic_couple} and \eqref{eq:feedback_abstract_form} is \emph{exponentially stable} if there exist $\kappa_0\geq 1,\omega>0$ such that, for every initial condition $(w_0,Y_0)\in \statespace\times \spaceY $, for all $t\geq 0$,
\[\|w(t,\cdot)\|_{L^2} + \|Y(t,\cdot)\|_{\spaceY} \leq \kappa_0\,\mathrm{e}^{-\omega t}\big(\|w_0\|_{L^2} + \|Y_0\|_{\spaceY} \big).
\]
\end{defn}
}

\subsection{Assumptions}
To achieve our stabilization objective, we will rely on the following assumptions.
{The first assumption imposes an open-loop stability condition needed for delay-robust exponential stabilization, as established in~\cite[Theorem~8]{auriol2019explicit}.
\begin{assum} \label{Assum_OL_stab}
    The open-loop system~\eqref{eq:hyperbolic_couple} with $\Sigma=0$ and $U\equiv 0$ is exponentially
    stable in the sense that there exist $\kappa_0\geq1$ and $\omega>0$ such that, for every initial condition $w_0$ in $\statespace$,
\[
\|w(t,\cdot)\|_{L^2} \leq \kappa_0\,\mathrm{e}^{-\omega t}\|w_0\|_{L^2}, \qquad t\geq 0.
\]
\end{assum}
Equivalently, Assumption~\ref{Assum_OL_stab} rules out asymptotic sequences of eigenvalues with non-negative real parts for the operator~$\mathcal A$ {(with $\Sigma =0$)}~\cite{halebook,auriol2019explicit}. This condition is necessary for delay-robust exponential stabilization: if the open-loop transfer function has infinitely many poles in the closed right half-plane, then no positive delay-robustness margin can be achieved in closed loop~\cite{logemann1996conditions}.} 
In addition, we {assume}
the following stabilizability assumption {(Fattorini–Hautus condition, see e.g.~\cite{fattorini1966some})}.
\begin{assum} \label{assum:stabilizability_edp}
    There exists \(\omega>0\) such that
\begin{equation}
\label{eq:pde-stab-ass}
\ker(s-\mathcal A^\star)\cap \ker \mathcal B^\star=\{0\},
~\forall s\in\C_\omega.
\end{equation}

\end{assum}
\section{Control Design and Main Result}
\label{section:control_design}
{In this section, we design a dynamic stabilizing feedback law of the form~\eqref{eq:feedback_abstract_form}. The structure of the control law is obtained through an invertible change of variables, namely a backstepping transformation. We conclude this section by stating the main result of the article.}

\subsection{Backstepping Change of Variables}
The control law will be expressed as a dynamic feedback of $\gamma (t)= \mathcal T w(t)$ where $\mathcal T$ is an invertible change of variable and $\gamma $ is a solution to the following partially decoupled system, defined for all $t>0$, and all $x\in [0,1]$, as
\begin{equation} \label{eq:hyperbolic_target_2}
\left\{
\begin{aligned}
&\gamma_t(t, x)+\Lambda\gamma_x(t, x)=
  \begin{pmatrix}
    G(x)\beta(t,0)\\ H(x)\beta(t, 1)
\end{pmatrix} + h_\gamma(x) U(t),\\[1mm]
&\alpha(t, 0)=Q\beta(t, 0) + B_0U(t),\\
 &\beta(t, 1)=R\alpha(t, 1)+B_1U(t)\\
 &+\int_0^1 F_\alpha(y)\,\alpha(t,y)\,dy +\int_0^1 F_\beta(y)\,\beta(t,y)\,dy,
\end{aligned}
\right.
\end{equation}
where $\gamma(t) :=(\alpha(t), \beta(t)) = \mathcal Tw(t)$, with $\alpha(t,x)\in \R^n, \beta(t,x) \in \R^m$, $G, H, F_\alpha, F_\beta, h_\gamma$ are piecewise continuous functions. 
\begin{lem}
    \label{lem:backsteppin_transform}
    There exist $G, H, F_\alpha, F_\beta, h_\gamma$ piecewise continuous functions and a bounded transformation $\mathcal T: \statespace\to \statespace$ (called the backstepping transform), that is invertible with bounded inverse, and that maps any solution $w$ of~\eqref{eq:hyperbolic_couple} to a solution $\gamma := \mathcal{T}w$ of~\eqref{eq:hyperbolic_target_2}.
    {In particular, there exist $C_1>0, C_2>0$ such that for all $t\geq 0$,
$$C_1\|\gamma(t,\cdot)\|_{L^2}\leq \|w(t,\cdot)\|_{L^2} \leq C_2\|\gamma(t,\cdot)\|_{L^2}.$$}
\end{lem}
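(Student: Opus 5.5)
I will build $\mathcal T$ as a composition of two Volterra-type transformations, following the construction of~\cite{auriol2019explicit} (and~\cite{hu2016control,coron2017finite} for the kernel equations). The first step removes the in-domain coupling $\Sigma$. I take
\[
\gamma(t,x)=w(t,x)-\int_0^x K(x,y)\,w(t,y)\,dy ,
\]
with $K$ defined on the triangle $\mathcal T_-=\{0\le y\le x\le 1\}$ and partitioned into blocks $K^{\pm\pm}$. Differentiating in $t$, substituting $\partial_t w=-\Lambda\partial_x w+\Sigma w+hU$, and integrating by parts gives the usual kernel equations
\[
\Lambda K_x+K_y\Lambda+K\Sigma=0 ,
\]
together with the diagonal condition $\Lambda K(x,x)-K(x,x)\Lambda=\Sigma(x)$ and a boundary condition at $y=0$ that involves $Q$. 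These equations cannot remove every coupling term. The boundary trace at $y=0$ leaves residual terms of the form $K(x,0)\Lambda\,\col(Q\beta(0),\beta(0))$, which can be written as $G(x)\beta(t,0)$. Some of the lower-triangular couplings among the $\beta$ components cannot be cancelled with only the prescribed boundary data. Following~\cite{coron2017finite,hu2016control}, I would fix the free kernel values on the $y=0$ boundary so that the remainder reduces to $G(x)\beta(t,0)$. The actuation term becomes
\[
h_\gamma(x)=h(x)-\int_0^x K(x,y)h(y)\,dy ,
\]
plus the boundary contribution $K(x,0)\Lambda^+B_0U$ coming from $w^+(0)$, which I also absorb into $h_\gamma$ through $\col(B_0,0)$. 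Existence of a piecewise continuous solution $K$ follows from the method of characteristics and successive approximations, exactly as in~\cite{hu2016control}. The ordering of the speeds is what makes the characteristic problem well posed, and piecewise continuity appears because of the discontinuities induced by the diagonal conditions.

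The second step produces the $H(x)\beta(t,1)$ term and the integral terms in the boundary condition at $x=1$. Some couplings are left over near $x=1$, namely the upper-triangular part that a single triangle cannot remove. For these I use a Fredholm/Volterra transformation on the upper triangle $y\ge x$, as in~\cite{auriol2019explicit}. That transformation pushes the remaining terms onto $\beta(t,1)$ and turns the right boundary condition into $\beta(1)=R\alpha(1)+B_1U+\int_0^1 F_\alpha\alpha+\int_0^1F_\beta\beta$. Here $F_\alpha,F_\beta$ are read off from the kernel traces at $x=1$, and $H$ collects the residual traces. At $x=0$, the condition $\alpha(0)=Q\beta(0)+B_0U$ is preserved because the Volterra integral over $[0,0]$ vanishes.

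For invertibility, each step is either a Volterra operator $\mathrm{Id}-\mathcal K$ with bounded kernel, or a transformation of the form identity plus a Volterra operator acting in the opposite direction. For these operators the Neumann series converges: the iterated kernels satisfy $\|K^{(j)}\|\le \|K\|_\infty^j/j!$. Hence each step has a bounded inverse of the same form on $L^2$, and so does the composition $\mathcal T$. The constants $C_1=\|\mathcal T\|^{-1}$ and $C_2=\|\mathcal T^{-1}\|$ then give the norm equivalence. The transformation maps solutions to solutions first for classical solutions with compatible data, since the computation above is then rigorous, and extends to all $L^2$ solutions by density and continuity of $\mathcal T$.

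I expect the main obstacle to be the kernel equations themselves: choosing the free boundary data for the $m\times m$ block so that the characteristic problem is well posed, and proving existence of a piecewise continuous solution. I would not redo this. I would invoke~\cite{hu2016control,coron2017finite,auriol2019explicit} for existence, and only check that the extra terms $hU$, $B_0U$ and $B_1U$ pass linearly into $h_\gamma$ and into the boundary conditions without affecting the kernel equations.
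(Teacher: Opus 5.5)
Your first transformation matches the paper's: a Volterra kernel $K$ on $\{y\le x\}$, residual terms $G^\pm(x)\beta(t,0)$, and $h$ modified by $K$ and $K(x,0)\Lambda^+B_0$. One point is backwards. The condition $(K^{--}(x,0)\Lambda^-)_{ij}=(K^{-+}(x,0)\Lambda^+Q)_{ij}$ for $i\ge j$ cancels the diagonal and lower-triangular entries. What survives in the $\beta$-equation is therefore a \emph{strictly upper-triangular} coupling $G^-(x)\beta(t,0)$ to the outgoing trace at $x=0$, not a coupling ``near $x=1$''. This structure is what the second step relies on. Also, the nonlocal term in the condition at $x=1$ already appears at this first step, through the inverse kernel $L(1,\cdot)$.

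The genuine gap is your second transformation. A Volterra operator on $\{y\ge x\}$, $\beta\mapsto\int_x^1 J(x,y)\beta(y)\,dy$, yields only traces at $y=x$ and $y=1$ after integration by parts. It therefore cannot produce a $\beta(t,0)$ term to cancel $G^-(x)\beta(t,0)$. Moreover, at $x=0$ its integral runs over all of $[0,1]$. Your claim that $\alpha(0)=Q\beta(0)+B_0U$ survives ``because the integral over $[0,0]$ vanishes'' is thus false for this operator. It would add nonlocal terms both to the left boundary condition and, via $G^+(x)\beta(0)$, to the $\alpha$-equation.

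The paper instead uses the Fredholm transformation of Coron--Hu--Olive on the whole square, acting only on $\beta$: $\chi_-=\gamma_- - \int_0^1 J(x,y)\gamma_-(y)\,dy$. Here $J$ is strictly upper-triangular and satisfies $\Lambda^-J_x+J_y\Lambda^-=0$, $J(x,0)=G^-(x)(\Lambda^-)^{-1}$ and $J(0,y)=0$. Each ingredient has a role:
\begin{itemize}
\item the trace at $y=0$ cancels $G^-(x)\beta(0)$;
\item the trace at $y=1$ generates $H(x)=J(x,1)\Lambda^-+\int_0^1J(x,y)H(y)\,dy$;
\item $J(0,\cdot)=0$ is what preserves the left boundary condition.
\end{itemize}

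For this operator your invertibility argument does not apply either. There is no factorial Neumann bound for a Fredholm kernel, and $\mathrm{Id}-\mathcal J$ need not be invertible in general. Here invertibility comes from nilpotency. Composing strictly upper-triangular kernels gives $\mathcal J^m=0$, so $(\mathrm{Id}-\mathcal J)^{-1}=\sum_{k=0}^{m-1}\mathcal J^k$, again with a strictly upper-triangular kernel $\bar J$. The same argument solves the integral equations for $H$ and $h_\beta$.
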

The proof of Lemma~\ref{lem:backsteppin_transform} is given in Appendix~\ref{section:backstep_transform}.
As for the original system~\eqref{eq:edp_coupled_operator_formuled}, the target
system~\eqref{eq:hyperbolic_target_2} admits the following abstract formulation
\begin{equation} \label{eq:operator_form_targer_2_gamma}
\frac{d}{dt}\gamma(t) = \mathcal{A}_1\,\gamma(t) + \mathcal{B}_1\,U(t),
\end{equation}
with $\mathcal{A}_1:D(\mathcal{A}_1)\subset\mathcal{X}\to\mathcal{X}$,
\begin{equation} \label{eq:A-def}
\mathcal{A}_1:\binom{\alpha}{\beta}
\mapsto
\binom{-\Lambda^+\alpha_x + G(\cdot)\beta(0)}{\Lambda^-\beta_x + H(\cdot)\beta(1)},
\end{equation}
with domain
\begin{align*}
D(\mathcal{A}_1) = \{
(\alpha,\beta)\in H^1((0,1),\R^{n+m}):
\alpha(0)=Q\beta(0),\\
\beta(1)=R\alpha(1)+\displaystyle\int_0^1\bigl(F_\alpha(\nu)\alpha(\nu)+F_\beta(\nu)\beta(\nu)\bigr)\,d\nu
\}.
\end{align*}
For $(\phi,\psi)\in H^1((0,1);\mathbb{R}^n)\times H^1((0,1);\mathbb{R}^m)$, define 
\begin{equation} \label{eq:zeta-def}
\zeta(\phi, \psi) := \Lambda^-\psi(1) + \int_0^1 H(x)^\star\psi(x)\,dx \in \mathbb{R}^m.
\end{equation}
The adjoint $\mathcal{A}_1^\star:D(\mathcal{A}_1^\star)\subset\mathcal{X}\to\mathcal{X}$
is given by
\begin{equation} \label{eq:Astar-def}
\mathcal{A}_1^\star\binom{\phi}{\psi}
= \binom{\Lambda^+\phi_x + F_\alpha(\cdot)^\star\zeta(\phi, \psi)}{-\Lambda^-\psi_x + F_\beta(\cdot)^\star\zeta(\phi, \psi)},
\end{equation}
with domain
\begin{align*} 
D(\mathcal{A}_1^\star) = \{
&(\phi,\psi)\in H^1((0,1),\R^{n+m}):
\\&Q^\star\Lambda^+\phi(0) - \Lambda^-\psi(0) + \displaystyle\int_0^1 G(x)^\star\phi(x)\,dx = 0,\\
&\Lambda^+\phi(1) = R^\star\zeta(\phi, \psi)\}.
\end{align*}
The adjoint control operator $\mathcal{B}_1^\star:D(\mathcal{A}_1^\star)\to\mathbb{R}^d$ is
\begin{equation} \label{eq:B1star-def}\begin{aligned} 
\mathcal{B}_1^\star:&(\phi,\psi)
\mapsto B_0^\star\Lambda^+\phi(0) + B_1^\star\zeta(\phi, \psi)
\\
&+ \int_0^1\bigl(h_\alpha(x)^\star\phi(x) + h_\beta(x)^\star\psi(x)\bigr)\,dx,
\end{aligned}
\end{equation}
with $\zeta(\phi, \psi)$ given by~\eqref{eq:zeta-def}.
Naturally,
the stabilizability property given in Assumption~\ref{assum:stabilizability_edp} is preserved under the backstepping transformation $\mathcal T$.
\begin{prop} \label{prop:stabilizability_target2}
Let $\omega>0$, Assumption~\ref{assum:stabilizability_edp}  holds if and only if
\begin{equation} \label{eq:pde-stab-ass_target2}
\ker(s-\mathcal{A}_1^\star)\cap\ker\mathcal{B}_1^\star = \{0\},~\forall\,s\in\C_\omega.
\end{equation}
\end{prop}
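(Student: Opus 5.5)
The plan is to exploit the intertwining relations supplied by Lemma~\ref{lem:backsteppin_transform}. The backstepping transform maps solutions of~\eqref{eq:hyperbolic_couple} to solutions of~\eqref{eq:hyperbolic_target_2}. Since $\mathcal T$ is bounded with bounded inverse, at the operator level this should give
\[
\mathcal T D(\mathcal A)=D(\mathcal A_1),\qquad \mathcal A_1\mathcal T=\mathcal T\mathcal A \ \text{ on } D(\mathcal A),
\]
and, at the level of the control operators (in the extrapolated sense), $\mathcal B_1=\tilde{\mathcal T}\mathcal B$. Here $\tilde{\mathcal T}$ is the extension of $\mathcal T$ to $D(\mathcal A^\star)'\to D(\mathcal A_1^\star)'$.

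The first step is to justify these relations. I would use the mild/weak formulation: for $w_0\in D(\mathcal A)$ and $U\equiv0$, the semigroup $e^{t\mathcal A}$ is conjugated to $e^{t\mathcal A_1}$ by $\mathcal T$. Differentiating at $t=0$ then identifies the generators and the domains. To obtain the input relation, I would compare the responses to smooth inputs with zero initial data; this identifies $\mathcal B_1$ with $\mathcal T\mathcal B$ as functionals on $D(\mathcal A_1^\star)$. Taking adjoints, and using that $\mathcal T^\star$ is bounded with bounded inverse, yields
\[
\mathcal T^\star D(\mathcal A_1^\star)=D(\mathcal A^\star),\qquad \mathcal A^\star\mathcal T^\star=\mathcal T^\star\mathcal A_1^\star,\qquad \mathcal B^\star\mathcal T^\star=\mathcal B_1^\star \ \text{ on } D(\mathcal A_1^\star).
\]

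The conclusion is then purely algebraic. Let $s\in\C_\omega$ and $z\in\ker(s-\mathcal A_1^\star)\cap\ker\mathcal B_1^\star$. Then $\mathcal T^\star z\in D(\mathcal A^\star)$, and
\[
(s-\mathcal A^\star)\mathcal T^\star z=\mathcal T^\star(s-\mathcal A_1^\star)z=0,\qquad \mathcal B^\star\mathcal T^\star z=\mathcal B_1^\star z=0.
\]
Hence, under Assumption~\ref{assum:stabilizability_edp}, $\mathcal T^\star z=0$, and injectivity of $\mathcal T^\star$ gives $z=0$. The converse direction is symmetric, using $(\mathcal T^\star)^{-1}=(\mathcal T^{-1})^\star$, which satisfies the analogous intertwining relations. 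Since the same $\omega$ is used on both sides, the equivalence holds for each fixed $\omega$.

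The main obstacle is the rigorous identification of the control operators, namely $\mathcal B^\star\mathcal T^\star=\mathcal B_1^\star$. $\mathcal B$ is unbounded (boundary inputs), and $\mathcal T$ contains Volterra/Fredholm kernels that are only piecewise continuous. I would first try to verify this identity directly. The plan is to compute $\mathcal T^\star z$ explicitly from the kernel form of $\mathcal T$ given in Appendix~\ref{section:backstep_transform}, evaluate its boundary traces and the integral against $h$, and check, via integration by parts and the kernel equations, that it reproduces~\eqref{eq:B1star-def}, including $h_\gamma=\mathcal T h$. If this computation becomes too heavy, I would fall back on the duality argument: pair $\frac{d}{dt}\langle \gamma(t),z\rangle$ along classical solutions with arbitrary inputs $U$, compare it with $\frac{d}{dt}\langle w(t),\mathcal T^\star z\rangle$, and read off both the intertwining of the generators and of the control operators.
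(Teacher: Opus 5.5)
Your proposal is correct and takes the same route as the paper: you move kernel elements between the two adjoint eigenvalue problems through the adjoint of the boundedly invertible backstepping transform, using the intertwining of the generators and of the control operators. Your choice of $\mathcal T^\star$, which maps $D(\mathcal A_1^\star)$ onto $D(\mathcal A^\star)$ as $\mathcal A_1\mathcal T=\mathcal T\mathcal A$ requires, is the right one, whereas the paper's sketch writes $(\mathcal T^{-1})^\star$; you also justify explicitly the identity $\mathcal B^\star\mathcal T^\star=\mathcal B_1^\star$ on $D(\mathcal A_1^\star)$, which the paper uses without proof.
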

\begin{proof}
Since the backstepping transformation $\mathcal{T}$ in
Lemma~\ref{lem:backsteppin_transform} is boundedly invertible, the
stabilizability of~\eqref{eq:hyperbolic_couple} under
Assumption~\ref{assum:stabilizability_edp} transfers directly
to~\eqref{eq:operator_form_targer_2_gamma}. Specifically, {because ${\mathcal{T}^{-1}}^\star D(\mathcal A_1^\star) \subset D(\mathcal A^\star)$}, any $(\phi, \psi)\in D(\mathcal A_1^\star)$ such that
$(s-\mathcal{A}_1^\star)(\phi,\psi)=0$ with $\mathcal{B}_1^\star(\phi,\psi)=0$ is
mapped via ${\mathcal{T}^{-1}}^\star$ to a non-trivial kernel element of
$(s-\mathcal{A}^\star)\cap\ker\mathcal{B}^\star$, which is empty
by~\eqref{eq:pde-stab-ass}. Hence $(\phi,\psi)=0$. The other implication is proved in a similar manner.
\end{proof}
\subsection{Control Law Structure}
\label{section:command_law_structure}
The control law is expressed in terms of $\gamma(t) = (\alpha(t), \beta(t) )=\mathcal T w(t)$. Consider the dynamical system defined for all $t>0$ and all $x\in [0,1]$ as
\begin{equation}
\left\{
\begin{aligned}
&\partial_t\chi_i(t,x)+\frac{1}{S_i}\partial_x\chi_i(t,x)
=0,
\quad i=1,\ldots,m,
\\
&\chi_i(t,0)
=\beta_i(t,1), 
\\
&\partial_t\psi(t,x)+\frac{1}{S_{m+1}}\partial_x\psi(t,x)
=0,
\\
&\psi(t,0)
=U_1(t),
\\[1mm]
&\partial_t\phi_j(t,x)+\frac{1}{T_j}\partial_x\phi_j(t,x)
=0,
\quad j=2,\ldots,d,
\\
&\phi_j(t,0)
=\beta(t,1),
\\
&U_1(t)
=
\sum_{i=1}^m
S_i\int_0^1 g_i(S_i x)\chi_i(t,x)\,dx
\\
&+
S_{m+1}\int_0^1 f(S_{m+1}x)\psi(t,x)\,dx,
\\
&U_j(t)
=
\sum_{k=1}^{j-1}v_{jk}U_k(t)
+
T_j u_j^\star\int_0^1\phi_j(t,x)\,dx,~
j=2,\ldots,d.
\end{aligned}
\right.
\label{eq:dynamical_feedback}
\end{equation}

where $\chi, \psi, \phi$ are intermediate PDE states.
The controller gains are
$$\gains :=(S, g, f , u, v, T)\in  \gainspace,$$
with 
\begin{equation} \begin{aligned}\label{eq:gainspace}\gainspace& :=  \R_{>0}^{m+1}\times\prod_{i=1}^m L^2\bigl((0,S_i),\mathbb{R}\bigr)\times L^2\bigl((0,S_{m+1}),\mathbb{R}\bigr)\\
&\times \R^{m\times (d-1)} \times \R^{d \times d}\times \R_{>0}^{d-1}.\end{aligned}\end{equation}
where \(v\in\mathbb R^{d\times d}\) is strictly lower triangular
Using the method of characteristics, the control operator in~\eqref{eq:dynamical_feedback} is equivalent to the following autoregressive ({meaning the control value at time $t$ depends on its own past values, as in~\cite{ammari2024prescribing}) control law, defined for all $j\in \{2,\cdots, d\}$ as
\begin{equation} \label{eq:control_law_auto_regressiv} \left\{
    \begin{aligned}
        U_j(t) &= \sum_{k=1}^{j-1} v_{jk} U_k(t) + u_j^\star \int_0^{T_j} \beta(t-\nu, 1)\,d\nu, \\
        U_1(t) &= \sum_{i=1}^m \int_0^{S_i} g_i(\eta) \beta_i(t-\eta,1)\,d\eta \\
        &~+ \int_0^{S_{m+1}} f(\eta) U_1(t-\eta)\,d\eta.
    \end{aligned}
    \right.
\end{equation}
Notice that the control law~\eqref{eq:dynamical_feedback} is of the form~\eqref{eq:feedback_abstract_form} with
\begin{align*}
\dynavar(t)
:=
\col (\chi, \psi, \phi) \in \spaceY
\end{align*}

The corresponding state space is $
\spaceY =L^2((0,1), \R^{m+1+(d-1)m}) =L^2((0,1), \R^{dm+1}).
$
Moreover, 
\begin{equation*}
    \dynaopvar Y:= -\Lambda_{ST} \partial_x Y , \quad \forall~ Y \in D(\dynaopvar),
\end{equation*}
with $\Lambda_{ST}:= \diag\left(
\Lambda_S, S_{m+1}^{-1},  
T_2^{-1}\operatorname{Id}_m,\ldots,
T_d^{-1} \operatorname{Id}_m
\right),$ where $\Lambda_S:= \diag (S_1 ^{-1}, \cdots, S_m^{-1})$,
with domain
\begin{align*}
D(\dynaopvar)
&=
\{
\dynavar\in
H^1((0,1), \R^{dm+1})
:
\\
&\psi(0)=\mathcal L_1 Y, ~\chi(0) = 0,~ \phi(0) = 0 
\},
\end{align*}
where
$
\mathcal L_1 Y
:=
\sum_{i=1}^m
S_i\int_0^1 g_i(S_i x)\chi_i(x)\,dx
+
S_{m+1}\int_0^1 f(S_{m+1}x)\psi(x)\,dx.
$
Furthermore, the operator $\dynaopstatevar$  is written as $\dynaopstatevar = \mathcal Q\mathcal T, $ where $\mathcal Q$ is defined through its adjoint. More precisely, for every \[ Z = \operatorname{col} (z_\chi,z_\psi,z_{\phi}) \in  D(\dynaopvar^\star)\] with 
$D(\dynaopvar^\star):=
\left\{
Z
\in H^1(0,1;\mathbb R^{dm+1})
:
Z(1)=0
\right\}, $\(\mathcal Q\) is
defined through
the duality identity \begin{align*} \left\langle \mathcal Q\gamma,Z \right\rangle_{D(\dynaopvar^\star)', D(\dynaopvar^\star)} &= \left\langle \gamma,\mathcal Q^\star Z \right\rangle \\
&= \beta(1)^\star \left( \Lambda_S z_\chi(0) + \sum_{i=2}^d T_i^{-1}z_{\phi_i}(0) \right).\end{align*}
For \(\dynavar =\col(\chi,\psi,\phi)\in\spaceY\), define
\[
\mathcal M Y
:=
\operatorname{col}
\left(
\mathcal M_1Y,
\left(
T_i u_i^\star \int_0^1 \phi_i(x)\,dx
\right)_{i=2}^d
\right),
\]
where
$
\mathcal M_1Y
:=
\sum_{\ell=1}^m
S_\ell \int_0^1 g_\ell(S_\ell x)\chi_\ell(x)\,dx
+
S_{m+1}\int_0^1 f(S_{m+1}x)\psi(x)\,dx.
$
Then the dynamic feedback operator is
\[
    \feedbackdynaop
    :=
    (\operatorname{Id}_d-v)^{-1}\mathcal M,
\]
where $\operatorname{Id}_d-v$ is invertible because $v$ is strictly lower triangular.
so that $
    U(t)=\feedbackdynaop Y(t).$
\subsection{Main Result}
The following theorem is the main result of the article.
\begin{thm}
    \label{main_result} Under Assumptions~\ref{Assum_OL_stab} and \ref{assum:stabilizability_edp}, 
there exist $\dynavar$, $\dynaopstatevar, \dynaopvar, \feedbackdynaop$ (the ones given in Section~\ref{section:command_law_structure}) such that the closed-loop~\eqref{eq:hyperbolic_couple}-\eqref{eq:feedback_abstract_form} is well-posed in $L^2$ and \emph{exponentially stable} in the sense of Definition~\ref{Def_stability}.
{Moreover, if $w_0\in D(\mathcal A)$ and $\dynavar_0\in H^1((0,1), \R^{dm+1})$ is such that for all $j\in \{2,\cdots, d\}$,\begin{equation} \label{eq:compatibility_Y}\chi_0(0) =\phi_{0,j}(0)= \beta_0(1), \quad \psi_0(0) = \mathcal L_1 \dynavar_0,\end{equation}
with $\gamma_0 = (\alpha_0, \beta_0) := \mathcal T w_0$,} then $(w,\dynavar)\in C([0,\infty), H^1([0,1], \R^{n+m}))\times C([0,\infty), H^1([0,1], \R^{dm+1})).$
\end{thm}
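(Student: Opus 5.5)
By Lemma~\ref{lem:backsteppin_transform}, $\mathcal T$ is an isomorphism of $\statespace$, and the feedback~\eqref{eq:dynamical_feedback} is written in terms of $\gamma=\mathcal T w$. It therefore suffices to prove well-posedness and exponential stability of the closed loop formed by the target system~\eqref{eq:hyperbolic_target_2} and~\eqref{eq:dynamical_feedback} in $\statespace\times\spaceY$. The estimate then transfers to $(w,\dynavar)$ with constants multiplied by $C_1^{-1}$ and $C_2$.

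\textbf{Well-posedness.} We view the closed loop as a hyperbolic system with nonlocal, Volterra-type boundary couplings. The state is $(\gamma,\chi,\psi,\phi)$ and all transport speeds are nonzero. The inputs $U$ appear through the bounded integral functional $\feedbackdynaop$ and the distributed term $h_\gamma U$. For such systems, classical fixed-point arguments along characteristics give a unique $L^2$ solution. Alternatively, one can check that the closed-loop generator is a bounded perturbation of a boundary-coupled transport generator. If the initial data lie in $D(\mathcal A)\times H^1$ and satisfy the compatibility conditions~\eqref{eq:compatibility_Y}, the same argument applied to the time derivative gives $H^1$ regularity. Transferring back through $\mathcal T$, which preserves $H^1$ by its Volterra structure, gives the stated regularity.

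\textbf{Reduction to an IDE.} Integrating~\eqref{eq:hyperbolic_target_2} along characteristics expresses $\alpha(t,\cdot)$ and $\beta(t,\cdot)$ in terms of past values of $z(t):=\beta(t,1)$ and $U$. The terms $G\beta(t,0)$ and $H\beta(t,1)$ depend only on boundary traces, and $\beta(t,0)$ is a delayed transport of $\beta(\cdot,1)$ plus integrals of $z$. Substituting into the boundary condition at $x=1$ yields an integral difference equation
\[
z(t)=\sum_k A_k z(t-\tau_k)+\int_0^{\bar\tau} N(\nu)z(t-\nu)\,d\nu+\sum_k B_k U(t-\tau_k')+\int_0^{\bar\tau} M(\nu)U(t-\nu)\,d\nu .
\]
Coupling this with the autoregressive law~\eqref{eq:control_law_auto_regressiv} gives a closed IDE in $(z,U)$. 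Exponential stability of the closed-loop PDE is then equivalent to exponential stability of this IDE, since every state is a finite-horizon transport of $(z,U)$ and conversely. In Laplace terms, the condition is that the characteristic matrix $\Delta(s)=\operatorname{Id}-\widehat{K}(s)$ has no zeros in $\C_\omega$. Assumption~\ref{Assum_OL_stab} keeps the principal (difference) part stable, so the integral terms are compact perturbations in the sense of Hale and only finitely many roots can lie in $\C_\omega$.

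\textbf{Gain design (main obstacle).} Using Proposition~\ref{prop:stabilizability_target2}, the Hautus condition translates into a rank condition on the open-loop IDE: the pair $(\widehat{\mathcal F}(s),\widehat{\mathcal G}(s))$ of entire functions has full row rank for all $s\in\C_\omega$. The steps are then as follows.
\begin{itemize}
\item Choose $v$, $T_j$ and $u_j$ so that the inputs $U_2,\dots,U_d$ are expressed through $U_1$ and finite-window averages of $z$. This reduces the problem to a single input, while keeping the rank condition. Because the exceptional $s$ are isolated, generic choices should work.
\item With a single input, solve a Bezout/Corona identity $a(s)\widehat{\mathcal F}(s)+b(s)\widehat{\mathcal G}(s)=\text{stable target}$. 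Here $a,b$ are Laplace transforms of $L^2$ functions with compact support $[0,S_i]$; these give $g_i$ and $f$.
\end{itemize}
The delicate point is ensuring that the Corona data are uniformly coprime on $\C_\omega$ and not merely pointwise. This is where Assumption~\ref{Assum_OL_stab} is needed: it bounds $|\widehat{\mathcal F}|$ below for large $|s|$. I would first try to prove this lower bound, then invoke the interpolation result of~\cite{braun_corona} on the finitely many remaining zeros. Finally, the exponential decay of $(z,U)$ propagates to all PDE states, which gives the estimate of Definition~\ref{Def_stability}.
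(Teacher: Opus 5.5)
Your outline follows the paper's route closely: reduction to the target system via $\mathcal T$, equivalence with the closed-loop IDE, Hautus condition $\Rightarrow$ rank condition~\eqref{eq:ide-rank-ass}, elimination of $U_2,\dots,U_d$, then a Corona/interpolation design of $g,f$. The genuine gap is the single-input reduction, which you justify by an argument that does not work. That rank-drop points are isolated for each fixed $(T,u,v)$ says nothing about genericity of the parameters, because the bad set is a union over all $s$ in a non-compact half-plane. For example, $s\mapsto\rho(s)-v$ with $\rho$ non-constant and holomorphic has isolated zeros for every $v$, yet every $v$ in the open set $\rho(\C_{\omega_2})$ is bad.

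What makes the choice work in Theorem~\ref{thm:rank_reduction} is a decoupling. For $s\notin Z_q$ (the zeros of $\det\widehat q$), a left kernel vector of $M_{u,v,T}(s)$ can be normalized by $w^\star\widehat p_d(s)=1$ thanks to~\eqref{eq:ide-rank-ass}. It is then forced to be $w^\star=K_T(s)u^\star\widehat q(s)^{-1}$. Hence the candidate $s$ are the zeros of the $v$-independent function $\psi_u(s)=K_T(s)u^\star\widehat q(s)^{-1}\widehat p_d(s)-1$. There are finitely many, because $\widehat q^{-1}$ is bounded off a compact set (Assumption~\ref{Assum_OL_stab} plus Riemann--Lebesgue on the kernel part of $\det\widehat q$) and $K_T(s)\to0$. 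Each such $s$ produces at most one bad $v$. The finitely many $s\in Z_q$ are handled separately. $T$ is chosen so that $K_T$ does not vanish on $Z_q$; there, $w^\star\widehat q(s)=K_T(s)u^\star$ then forces $u^\star$ into the row space of the singular matrix $\widehat q(s)$, i.e.\ $u$ into a proper hyperplane, which $u$ then avoids. Without this three-stage choice ($T$, then $u$, then $v$), \emph{generic choices should work} is not established.

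There are two smaller points. First, the Corona/interpolation identity is not solved exactly with $g,f$ supported in $[0,S_i]$. \cite{braun_corona} gives a minimal-norm solution in $L^2_{\omega_2}(0,\infty)$, and the paper obtains finite-memory gains by truncating it. The exponentially small tail is absorbed by the strict margin $|\det\Delta_0(s)|>\delta_0$ of Corollary~\ref{coro:pp_ide_stable}, so only the approximate inequality~\eqref{eq:Corona_approx} is imposed. Assumption~\ref{Assum_OL_stab} is thus used twice: for uniform coprimeness at infinity (as you note) and for this robustness margin. Second, your alternative well-posedness argument is incorrect as stated. In target coordinates, $G(x)\beta(t,0)$ and $H(x)\beta(t,1)$ are trace terms, not bounded perturbations, and the nonlocal boundary terms change the domain. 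Keep the characteristics/fixed-point route, or the Lumer--Phillips adaptation the paper invokes.
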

The rest of the article is dedicated to proving Theorem~\ref{main_result}.
The methodology is the following.
\begin{enumerate}
    \item System~\eqref{eq:hyperbolic_couple} is mapped into~\eqref{eq:hyperbolic_target_2} using an invertible backstepping transform.
    \item We prove that the exponential stability in closed-loop of~\eqref{eq:hyperbolic_target_2} and~\eqref{eq:dynamical_feedback} is equivalent to the exponential stability of an IDE.
    \item The stabilizing gains~$\gains\in\gainspace$ are determined at the IDE level by applying a stable rank-reduction procedure, which reduces the problem to a single-input control problem while preserving stabilizability properties, and by subsequently solving an interpolation equation arising from a Corona problem.
\end{enumerate}
\begin{rem} \label{rem:spatially_lambda}
In the case of spatially varying transport speeds $x \mapsto \Lambda(x)$ in~\eqref{eq:hyperbolic_couple} (see, e.g.,~\cite{spatially_varying}), the method proposed in this paper remains applicable. This requires only minor adjustments to the method of characteristics used to derive the IDE, together with corresponding modifications of the backstepping kernel equations.
\end{rem}
\section{IDE Reformulation}
\label{section:IDE}
In this section, we show that the closed-loop system~\eqref{eq:hyperbolic_target_2} with the control law \eqref{eq:dynamical_feedback} can be rewritten as an IDE with equivalent stability properties.
\subsection{Stabilization problem on the IDE}
{Let $w_0\in D(\mathcal A)$ and $Y_0\in H^1((0,1),\mathbb R^{dm+1})$ satisfy the compatibility condition~\eqref{eq:compatibility_Y}. In what follows, we assume the $H^1$-well-posedness of the closed-loop system~\eqref{eq:hyperbolic_target_2}--\eqref{eq:dynamical_feedback}. In particular, its solution satisfies
$
\gamma\in C([0,\infty),H^1([0,1],\mathbb R^{n+m})).
$
This well-posedness is addressed at the beginning of Section~\ref{section:proof_main_result}.}
Set $X(t) = \beta(t,1)$ for all $t \geq 0$. We have the following lemma.
\begin{lem} \label{lem_derivation_IDE_closed_loop}
There exist parameters $l > 0$, $A_k \in \R^{m\times m}$, $\widetilde{B}_i \in \R^{m\times d}$, $\tau_k > 0$, $\theta_i > 0$ (where $k\in \{1,\cdots, l\}$ and $i\in\{1,\cdots,n\}$), $N\in L^2((0, \dmax), \R^{m\times m})$, $M\in L^2((0, \dmax), \R^{m\times d})$, and $\dmax > 0$, such that, for all $t \geq \dmax$, $(X, U)$ satisfies
\begin{equation} \label{eq:ide-input-target}\begin{aligned}
&X(t) = \sum_{k=1}^l A_k X(t-\tau_k) + \int_0^{\tau_*} N(\nu) X(t-\nu)\,d\nu \\
&+\sum_{i=1}^{n} \widetilde{B}_i U(t-\theta_i) + \int_0^{\tau_*} M(\nu) U(t-\nu)\,d\nu + B_1 U(t),
\end{aligned}
\end{equation}
for all $j\in \{2,\cdots, d\}$,
\begin{equation} \label{eq:Ui_rank_reduction}
U_j(t) = \sum_{k=1}^{j-1} v_{jk} U_k(t) + u_j^\star \int_0^{T_j} X(t-\nu)\,d\nu,
\end{equation}
and
\begin{equation} \label{eq:U1_form}
U_1(t) = \sum_{i=1}^m \int_0^{S_i} g_i(\eta) X_i(t-\eta)\,d\eta + \int_0^{S_{m+1}} f(\eta) U_1(t-\eta)\,d\eta,
\end{equation}
where $\dmax>0$ is the largest transport time, i.e.,
\begin{equation} \label{eq:retard_max}
\dmax = \max\{\tau_1,\ldots,\tau_l,\,\theta_1,\ldots,\theta_n,\,S_1,\ldots,S_{m+1},\,T_2,\ldots,T_{d}\}.
\end{equation}
\end{lem}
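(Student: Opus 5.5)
Equations~\eqref{eq:Ui_rank_reduction} and~\eqref{eq:U1_form} need no real work: they are the autoregressive form~\eqref{eq:control_law_auto_regressiv} of the controller~\eqref{eq:dynamical_feedback}, obtained by the method of characteristics on the transport equations for $\chi_i$, $\psi$, $\phi_j$. For instance, for $t\geq S_i$ one has $\chi_i(t,x)=\beta_i(t-S_ix,1)=X_i(t-S_ix)$, and the substitution $\eta=S_ix$ turns $S_i\int_0^1 g_i(S_ix)\chi_i(t,x)\,dx$ into $\int_0^{S_i}g_i(\eta)X_i(t-\eta)\,d\eta$. The $\psi$ and $\phi_j$ terms are treated the same way, which gives both relations for all $t\geq\dmax$. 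The substance of the lemma is the IDE~\eqref{eq:ide-input-target} for $X(t)=\beta(t,1)$, and I derive it from the target system~\eqref{eq:hyperbolic_target_2} alone.

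I integrate~\eqref{eq:hyperbolic_target_2} along characteristics. For the $\alpha_i$ component,
\[
\alpha_i(t,x)=\alpha_i\bigl(t-\tfrac{x}{\lambda_i},0\bigr)+\int_0^{x/\lambda_i}\Bigl(G_i(x-\lambda_i s)X_0(t-s)+h_{\gamma,i}(x-\lambda_i s)U(t-s)\Bigr)\,ds,
\]
where $X_0(t):=\beta(t,0)$. The characteristics of $\beta_j$ run from $x=1$ to $x=0$, so
\[
\beta_j(t,x)=X_j\bigl(t-\tfrac{1-x}{\mu_j}\bigr)+\int_0^{(1-x)/\mu_j}\Bigl(H_j(x+\mu_j s)X(t-s)+h_{\gamma,\beta_j}(x+\mu_j s)U(t-s)\Bigr)\,ds .
\]
Setting $x=0$ expresses $X_0(t)$ as a combination of point delays $X_j(t-1/\mu_j)$, distributed delays of $X$ over $[0,1/\mu_j]$, and distributed delays of $U$. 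The boundary condition $\alpha(t,0)=QX_0(t)+B_0U(t)$ then gives $\alpha(t,0)$ in terms of past values of $X$ and $U$ and of the current $U(t)$.

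Next I substitute into the right boundary condition. Since $X(t)=R\alpha(t,1)+B_1U(t)+\int_0^1(F_\alpha\alpha+F_\beta\beta)(t,y)\,dy$, each of $\alpha(t,1)$ and $\alpha(t,y)$ is replaced by the formula above. This produces point delays $X(t-1/\lambda_i-1/\mu_j)$, which become the $A_k$, $\tau_k$ terms, and point delays $U(t-1/\lambda_i)$ from $B_0U$ transported along $\alpha_i$, which become the $\widetilde B_i U(t-\theta_i)$ terms with $\theta_i=1/\lambda_i$ and $\widetilde B_i$ built from the $i$-th row of $RB_0$. All remaining terms are distributed delays. The integrals $\int_0^1 F_\beta(y)\beta(t,y)\,dy$ and $\int_0^1 F_\alpha(y)\alpha(t,y)\,dy$ become, after changes of variables $\nu=(1-y)/\mu_j$ or $\nu=y/\lambda_i$, integrals $\int_0^{\tau_*}N(\nu)X(t-\nu)\,d\nu$ and $\int_0^{\tau_*}M(\nu)U(t-\nu)\,d\nu$. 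The iterated integrals (for example $G$ composed with transport) are handled by Fubini. The only undelayed term is $B_1U(t)$, because the $\alpha$-delays are strictly positive and the in-domain terms contribute only integrals.

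The main obstacle is checking that the kernels lie in $L^2$ and that no hidden instantaneous term appears. Because $G,H,F,h_\gamma$ are only piecewise continuous, each kernel is a finite sum of bounded piecewise continuous functions on bounded intervals, hence in $L^2((0,\dmax))$ once extended by zero. I also need all characteristics to have reached the boundary, which holds once $t\geq\max(1/\lambda_1+1/\mu_1,\,1/\lambda_1,\,1/\mu_1)$. Finally I take $\dmax$ as in~\eqref{eq:retard_max}, enlarging the list of $\tau_k$ to include every transport time so that all kernels are supported in $[0,\dmax]$. The $H^1$ regularity assumed at the start of the section justifies these pointwise trace evaluations.
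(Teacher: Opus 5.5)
Your proposal is correct and follows essentially the paper's proof: you solve $\beta$ along its characteristics, evaluate at $x=0$, feed $\alpha(t,0)=Q\beta(t,0)+B_0U(t)$ into the $\alpha$ characteristics, and substitute into the right boundary condition to separate point delays ($\tau_k$, $\theta_i=1/\lambda_i$) from $L^2$ distributed kernels. The paper does the same transport solves in the Laplace variable with zero initial data and then inverts, whereas your time-domain characteristics make the validity threshold $t\geq\tau_*$ explicit without that convention.

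One small slip: the input point-delay matrix is $\widetilde B_i=R e_i e_i^\star B_0$, i.e.\ the $i$-th column of $R$ times the $i$-th row of $B_0$ ($e_i$ the canonical basis of $\R^n$), not the $i$-th row of $RB_0$.
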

This result is proved in Appendix~\ref{Appendix_IDE}. The delays $\tau_k$ and $\theta_i$ depend on the transport speeds encoded in $\Lambda$ in~\eqref{eq:hyperbolic_couple}; the matrices $A_k$ and $\widetilde{B}_i$ depend on the reflection matrices $Q$ and $R$; $N$ depends on the in-domain coupling matrix $\Sigma$; and $M$ depends on the function $h$.
The initial condition of the closed-loop IDE~\eqref{eq:ide-input-target}-\eqref{eq:U1_form} is a history function $$(X_{\dmax}, U_{\dmax}) \in (L^2((0,\dmax);\R^{m}))\times (L^2((0,\dmax);\R^{d})),$$ and the state $(X_t, U_t) \in (L^2((0,\dmax);\R^{m}))\times (L^2((0,\dmax);\R^{d}))$ is defined for almost all $\eta \in [0,\dmax]$ by $(X_t(\eta), U_t(\eta)) = (X(t-\eta), U(t-\eta))$. For background on IDEs, we refer to~\cite{braun2026spectralexponentialstabilitycriterion,halebook}. We recall the following definition.
\begin{defn}[$L^2$-exponential stability (IDE)] \label{def:stability_ide}
The zero solution of the closed-loop IDE~\eqref{eq:ide-input-target}-\eqref{eq:U1_form} is exponentially stable if there exist $\kappa_0 \geq 1$ and $\omega > 0$ such that, for all $(X_{\dmax}, U_{\dmax}) \in (L^2((0,\dmax);\R^{m}))\times (L^2((0,\dmax);\R^{d}))$ and all $t \geq \dmax$,
$$\|X_t\|_{L^2} + \|U_t\|_{L^2} \leq \kappa_0 e^{-\omega t}\bigl(\|X_{\dmax}\|_{L^2} + \|U_{\dmax}\|_{L^2}\bigr).$$
\end{defn}
The IDE~\eqref{eq:ide-input-target}-\eqref{eq:U1_form} is of interest because its stability is equivalent to that of the PDE~\eqref{eq:hyperbolic_target_2} and \eqref{eq:dynamical_feedback}, as stated by the following theorem. 

\begin{thm}{\cite[Theorem 6.1.3, Chap.6]{auriol_hdr}} \label{thm:equiv_stab_ide_edp}
Let $\gains \in \gainspace$. The closed-loop system ~\eqref{eq:hyperbolic_target_2} with the control law~\eqref{eq:dynamical_feedback} is exponentially stable if and only if the IDE closed-loop~\eqref{eq:ide-input-target}-\eqref{eq:U1_form} is exponentially stable.
\end{thm}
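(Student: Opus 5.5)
We prove the two implications separately. Both rest on the method of characteristics, which expresses the full PDE state of \eqref{eq:hyperbolic_target_2}--\eqref{eq:dynamical_feedback} on a time window of length $\dmax$ in terms of the boundary traces $X(t)=\beta(t,1)$ and the input $U$. We work first with $H^1$ data satisfying \eqref{eq:compatibility_Y}, where all traces and characteristic manipulations are legitimate. We then extend to $L^2$ initial data by density, using that every estimate obtained is linear and bounded in $L^2$.

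\emph{PDE stability implies IDE stability.} This direction is the easier one. First, $X$ and $U$ on $[t-\dmax,t]$ can be bounded in $L^2$ by the PDE state on a slightly earlier window. Indeed, $\beta(\cdot,1)$ on a time interval is obtained by integrating $\beta$ along characteristics from $x=0$ to $x=1$, and the source terms $H(x)\beta(t,1)$ and $h_\gamma U$ are handled by a Gronwall argument. The controller states satisfy $\chi_i(t,x)=\beta_i(t-S_ix,1)$, and similarly for $\psi$ and $\phi_j$, so their $L^2$ norms are exactly weighted $L^2$ norms of the histories of $X$ and $U$. Conversely, any history $(X_{\dmax},U_{\dmax})$ can be realized by a PDE initial condition of comparable norm: the controller states encode the histories directly, and a PDE profile is chosen so that the IDE \eqref{eq:ide-input-target} starts from that history. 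Combining the two bounds transfers the exponential decay.

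\emph{IDE stability implies PDE stability.} This is the main step. Fix $t\ge 2\dmax$. Solve the transport equations of \eqref{eq:hyperbolic_target_2} along characteristics:
\begin{itemize}
\item $\beta(t,x)$ depends on $\beta(\cdot,1)$ at times in $[t-\dmax,t]$ together with integral terms involving $H$ and $h_\gamma U$;
\item $\alpha(t,x)$ depends on $\alpha(\cdot,0)=Q\beta(\cdot,0)+B_0U$ and on $G\beta(\cdot,0)$, and $\beta(\cdot,0)$ is itself expressed through $X$ and $U$.
\end{itemize}
Hence $\|\gamma(t)\|_{L^2}\le C(\|X_t\|_{L^2(0,2\dmax)}+\|U_t\|_{L^2(0,2\dmax)})$. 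The controller states are likewise controlled by $\|X_t\|_{L^2}+\|U_t\|_{L^2}$. The initial history of the IDE at time $\dmax$ is bounded by $\|\gamma_0\|_{L^2}+\|Y_0\|_{\spaceY}$ through a finite-time energy estimate on $[0,\dmax]$, which follows from well-posedness.

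The obstacle is the integral term $\int_0^1(F_\alpha\alpha+F_\beta\beta)$ in the boundary condition. It must already be absorbed into the kernel $N$ and $M$, and this is exactly what Lemma~\ref{lem_derivation_IDE_closed_loop} provides. We also need every characteristic representation to involve only bounded operators from $L^2$ histories. I would check this carefully for the nonlocal terms $G(x)\beta(t,0)$ and $H(x)\beta(t,1)$: point evaluations in time integrate to $L^2$-bounded quantities after integrating in $x$. Finally, combining these estimates with Theorem~\ref{main_result}'s norm equivalence from Lemma~\ref{lem:backsteppin_transform} yields the exponential estimate of Definition~\ref{Def_stability} with the IDE decay rate.
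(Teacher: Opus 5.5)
Your outline is correct and matches the characteristics-based argument behind the cited result; the paper defers its own proof to supplementary material, so the comparison is with that standard argument. The explicit formulas from the proof of Lemma~\ref{lem_derivation_IDE_closed_loop} bound $(\gamma,Y)$ by $(X_t,U_t)$, and conversely an arbitrary history is realized by taking as closed-loop state, at the end of the history window, the values those same formulas prescribe; uniqueness of the closed-loop solution then makes its future coincide with the IDE solution.

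Read the realization step in this time-shifted sense only. The closed loop cannot, in general, generate an arbitrary $L^2$ history of $X$ or of $U_j$, $j\geq 2$, on $[0,\dmax]$ itself. Also, the statement concerns the target system~\eqref{eq:hyperbolic_target_2}, so you need no appeal to Lemma~\ref{lem:backsteppin_transform} or to Theorem~\ref{main_result}.
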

\begin{proof}
  For the sake of completeness, the proof of Theorem~\ref{thm:equiv_stab_ide_edp} is given in the complementary material files available to the reviewers.   
\end{proof}
 We therefore seek gains $\gains \in \gainspace$ such that the IDE closed-loop~\eqref{eq:ide-input-target}-\eqref{eq:U1_form} is exponentially stable in the sense of Definition~\ref{def:stability_ide}.
\subsection{Stability Properties of the IDE}
Before designing the controller gains that guarantee the exponential stability of the IDE system~\eqref{eq:ide-input-target}-\eqref{eq:U1_form}, we first establish several stability-related properties of the IDE~\eqref{eq:ide-input-target}-\eqref{eq:U1_form}. In particular, the proposed stability analysis will be done in the Laplace domain using  the characteristic equation~\cite{halebook} associated with the closed-loop dynamics. Therefore, the contribution of the initial conditions can be disregarded as they do not affect the location of the characteristic roots governing stability. Formally taking the Laplace transform of~\eqref{eq:ide-input-target}, we obtain for all $\omega>0$ and all $s\in \C_\omega$

\begin{equation}
    \label{eq:ide_laplace}
    \qhat(s) X(s) = \phat_1(s)U_1(s)+\ldots+\phat_d(s) U_d(s).
\end{equation}
where, 
\begin{equation}
\label{eq:Qs-def-proof}
\qhat(s):=\Delta_0(s)-\Nhat(s) \in \C^{m\times m}
\end{equation}
with 
\begin{equation} \label{def:pp}\Delta_0(s):=\operatorname{Id}_m-\sum_{k=1}^l A_k e^{-\tau_k s}.\end{equation}
and
\begin{equation}
\begin{aligned}
\label{eq:Ps-def-proof}
\phat(s)&=[\phat_1(s),\cdots, \phat_d(s)]\\
:&=B_1+\sum_{i=1}^n \widetilde B_i e^{-s\theta_i}
+\Mhat(s) \in \C^{m\times d}
\end{aligned}
\end{equation}
We have the following consequence of Assumption~\ref{Assum_OL_stab}.
\begin{corollary} \label{coro:pp_ide_stable}
    Under Assumption~\ref{Assum_OL_stab}, there exists $\omega_1>0$ and $\infpp>0$ such that, for all $s\in \C_{\omega_1}$,
    \begin{equation}
        \label{inf:pp_positif}
       | \det \Delta_0(s)|>\infpp.
    \end{equation}
\end{corollary}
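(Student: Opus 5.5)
Under Assumption~\ref{Assum_OL_stab}, the corollary follows by showing that the open-loop system ($\Sigma=0$, $U\equiv 0$) reduces, along characteristics, to a pure difference equation whose characteristic matrix is exactly $\Delta_0(s)$. From there we use the classical spectral characterization of exponential stability for difference equations (see~\cite{halebook}).

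\textbf{Reduction to a difference equation.} With $\Sigma=0$ and $U=0$, system~\eqref{eq:hyperbolic_couple} consists of decoupled transport equations. The method of characteristics gives, for $t$ large, $w^+(t,1)=w^+(t-1/\lambda_i,0)$ componentwise, $w^+(t,0)=Qw^-(t,0)$, $w^-_j(t,0)=w^-_j(t-1/\mu_j,1)$ and $w^-(t,1)=Rw^+(t,1)$. Eliminating everything except $X(t)=w^-(t,1)$, we get
\[
X(t)=\sum_{k}A_k X(t-\tau_k).
\]
This is the same computation that produces the principal part of~\eqref{eq:ide-input-target} in Lemma~\ref{lem_derivation_IDE_closed_loop}. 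There the $A_k$ come from products of entries of $R$ and $Q$, and the delays are $\tau_k=1/\lambda_i+1/\mu_j$. So the matrices $A_k$ and delays $\tau_k$ appearing in $\Delta_0$ are precisely those of the open-loop difference equation. Moreover, the $L^2$ norm of the PDE state on $[0,1]$ is equivalent to the $L^2$ norm of the history $X_t$ on a window of length $\max\tau_k$: each characteristic traces back to a boundary value of $X$. Exponential stability of the open-loop PDE (Assumption~\ref{Assum_OL_stab}) is therefore equivalent to exponential stability of this difference equation in $L^2$.

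\textbf{Spectral step.} For difference equations $X(t)=\sum A_kX(t-\tau_k)$, the Hale--Henry theory~\cite[Chap.~9]{halebook} gives that $L^2$-exponential stability with rate $\omega$ is equivalent to
\[
\sup\{\Re s:\det\Delta_0(s)=0\}<0 .
\]
It is moreover equivalent to the stronger statement that there exist $\omega_1>0$ and $\infpp>0$ with $|\det\Delta_0(s)|\ge\infpp$ on $\C_{\omega_1}$. One direction is the easy one. If $\det\Delta_0$ were not bounded away from zero on every half-plane $\C_{\omega_1}$, we would get a sequence $s_k$ with $\Re s_k>-1/k$ or so, and $\det\Delta_0(s_k)\to0$. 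We then split into two cases. If $\Im s_k$ stays bounded, compactness and analyticity give a root with $\Re s\ge 0$, contradicting stability. If $|\Im s_k|\to\infty$, we use that $\det\Delta_0$ is an almost periodic exponential polynomial in $\Im s$, uniformly on vertical strips. By Henry's argument (or the Bohr/Rouché argument in~\cite{halebook}), near-zeros with large imaginary part force genuine zeros nearby, with real parts accumulating at $\ge -\epsilon$ for every $\epsilon$. That yields an asymptotic chain of roots with real part $\ge 0$ (after possibly shrinking), which is again excluded by Assumption~\ref{Assum_OL_stab}, as recalled after its statement. Finally, for $\Re s\to+\infty$ we have $\det\Delta_0(s)\to1$, so only a bounded-real-part strip needs checking.

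\textbf{Main obstacle.} The delicate step is the uniform lower bound on vertical strips as $|\Im s|\to\infty$: the absence of zeros does not by itself preclude $\det\Delta_0$ from decaying along a sequence. I would handle this by invoking directly the known result that, for exponential polynomials, $\inf_{\Re s\ge -\omega_1}|\det\Delta_0(s)|>0$ holds iff all zeros satisfy $\Re s\le -\omega_1-\epsilon$ for some $\epsilon>0$. This is the Bohr almost-periodicity property, cf.~\cite{halebook,auriol2019explicit}. Choosing $\omega_1$ strictly smaller than the spectral abscissa gap supplied by Assumption~\ref{Assum_OL_stab} then gives~\eqref{inf:pp_positif}.
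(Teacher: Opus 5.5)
Your proposal follows the paper's route. You reduce the open-loop system ($\Sigma=0$, $U\equiv0$) to the principal difference equation $X(t)=\sum_k A_kX(t-\tau_k)$ and infer from Assumption~\ref{Assum_OL_stab} a zero-free half-plane for $\det\Delta_0$ (the paper via \cite[Theorem~6]{braun2026spectralexponentialstabilitycriterion}, you via Hale--Henry); you then get the uniform bound on a slightly smaller half-plane from the almost-periodicity/Hurwitz property of exponential polynomials, which the paper imports as \cite[Proposition~A.5]{felipe}.

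One small correction: $\|w(t,\cdot)\|_{L^2}$ and $\|X_t\|_{L^2}$ need not be equivalent at a fixed time. In general only $\|w(t,\cdot)\|_{L^2}\lesssim\|X_t\|_{L^2}$ holds, because the window of length $\tau_*$ may contain values of $X$ no longer stored in $w(t,\cdot)$. The stability transfer should therefore be argued as an equivalence of exponential stability (as the paper does by adapting Theorem~\ref{thm:equiv_stab_ide_edp}), for instance via bounds over time windows, not as a pointwise-in-time norm equivalence.
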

\begin{proof}
   First, observe that applying the same computations as in the proof of Lemma~\ref{lem_derivation_IDE_closed_loop} to the open-loop system~\eqref{eq:hyperbolic_couple}, with $\Sigma=0$ and $U\equiv 0$, yields the principal part of the IDE~\eqref{eq:ide-input-target}, namely
    \begin{equation}
        \label{eq:ide_pp}
        X(t) =
\sum_{k=1}^l A_kX(t-\tau_k),
    \end{equation}
where $A_k$ and $\tau_k$ are defined in~\eqref{eq:Ak-input}.
By adapting the proof of Theorem~\ref{thm:equiv_stab_ide_edp} by choosing the control law $U\equiv 0$ and the in-domain coupling terms $\Sigma \equiv 0$ in~\eqref{eq:hyperbolic_couple}, Assumption~\ref{Assum_OL_stab} implies that the principal part of the IDE~\eqref{eq:ide-input-target} is exponentially stable in the sense of Definition~\ref{def:stability_ide}. By~\cite[Theorem 6]{braun2026spectralexponentialstabilitycriterion}, this is equivalent to the existence of $\omega_0>0$ such that $\det \Delta_0(s) \neq 0$ for all $s\in \C_{\omega_0}$.
Finally, using~\cite[Proposition A.5]{felipe}, there exist $\omega_1< \omega_0$ and $\infpp>0$ such that~\eqref{inf:pp_positif} holds.
\end{proof}
Next, we show that Assumption~\ref{assum:stabilizability_edp} implies a stabilizability condition on the IDE~\eqref{eq:ide-input-target}, namely a maximal rank condition. This condition is believed to be  a strong stabilizability condition, consistent with the case without distributed delays (see~\cite[Theorem 3.1]{hale_feedback_2} and the discussion after equation (3.3)
in the same reference).
\begin{prop} \label{prop:full_rank_ide}
Under Assumption~\ref{assum:stabilizability_edp} and for the same $\omega >0$ as in Assumption~\ref{assum:stabilizability_edp},
\begin{equation}
\label{eq:ide-rank-ass}
\operatorname{rank}[\qhat(s),-\phat(s)]=m,
\qquad \forall s\in\mathbb C_{\omega},
\end{equation}
\end{prop}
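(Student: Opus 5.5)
The argument is by contraposition: from a rank defect of $[\qhat(s),-\phat(s)]$ we build a nonzero element of $\ker(s-\mathcal A_1^\star)\cap\ker\mathcal B_1^\star$. This contradicts~\eqref{eq:pde-stab-ass_target2}, which holds by Proposition~\ref{prop:stabilizability_target2}. Fix $s\in\C_\omega$ and suppose the rank is less than $m$. Then there is a nonzero row vector $\xi\in\C^{1\times m}$ with $\xi\qhat(s)=0$ and $\xi\phat(s)=0$. The goal is to construct $(\phi,\psi)\in D(\mathcal A_1^\star)$, nonzero, solving $\mathcal A_1^\star(\phi,\psi)=\bar s(\phi,\psi)$ (up to conjugation conventions), with $\mathcal B_1^\star(\phi,\psi)=0$.

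\textbf{Step 1 (solve the adjoint eigenvalue ODE).} The first key observation is that the adjoint eigenproblem~\eqref{eq:Astar-def} consists of decoupled transport ODEs forced only by the finite-dimensional quantity $\zeta:=\zeta(\phi,\psi)\in\C^m$. Treat $\zeta$ as an unknown parameter. Solving $\Lambda^+\phi_x+F_\alpha^\star\zeta=\bar s\phi$ from the boundary value $\Lambda^+\phi(1)=R^\star\zeta$ gives $\phi$ explicitly as exponentials plus integrals of $F_\alpha^\star$, linear in $\zeta$. Likewise $-\Lambda^-\psi_x+F_\beta^\star\zeta=\bar s\psi$ is solved from the initial value $\psi(0)$. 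This value is fixed by the remaining domain condition $\Lambda^-\psi(0)=Q^\star\Lambda^+\phi(0)+\int_0^1G^\star\phi$, so $\psi$ is also linear in $\zeta$.

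\textbf{Step 2 (close the loop on $\zeta$).} Substituting these formulas into the definition~\eqref{eq:zeta-def} of $\zeta$ produces a linear fixed-point relation $\mathcal K(s)\zeta=0$ for an $m\times m$ matrix $\mathcal K(s)$. Similarly, $\mathcal B_1^\star(\phi,\psi)=\mathcal P(s)\zeta$ for a $d\times m$ matrix $\mathcal P(s)$. The heart of the proof is to identify these matrices with the IDE symbols: $\mathcal K(s)=\qhat(\bar s)^\star$ and $\mathcal P(s)=\phat(\bar s)^\star$, up to invertible factors and conjugation. The characteristic computation in Appendix~\ref{Appendix_IDE} (proof of Lemma~\ref{lem_derivation_IDE_closed_loop}) builds $A_k,\tau_k,N$ from $Q,R,\Lambda,G,H,F$ and builds $\widetilde B_i,M,B_1$ from $B_0,B_1,h_\gamma$ by exactly the same integrations along characteristics. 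So $\mathcal K$ and $\mathcal P$ should be the adjoints of the Laplace-domain transfer matrices, and $\zeta^\star=\xi$ (suitably conjugated) gives the kernel vector.

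An alternative route avoids the adjoint bookkeeping. Take $\widehat X=\xi^\star$ and check directly that the resolvent-type construction yields the eigenvector. Equivalently, use a duality pairing: for $(\phi,\psi)$ built from $\zeta$ and any solution of the target system, the pairing identity shows $\xi\qhat(s)X=\xi\phat(s)U$ is what annihilates.

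\textbf{Step 3 (nontriviality).} We must check $(\phi,\psi)\neq0$ when $\zeta\neq0$. If $(\phi,\psi)=0$, then~\eqref{eq:zeta-def} gives $\zeta=\Lambda^-\psi(1)+\int H^\star\psi=0$, a contradiction. This step is easy.

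The main obstacle is Step 2: matching precisely the adjoint-side matrices with $\qhat$ and $\phat$, including the contributions of $F_\alpha,F_\beta,G,H,h_\gamma$ that generate the distributed kernels $N,M$. I would handle it with the duality pairing rather than a term-by-term comparison. For a smooth solution $\gamma$ of~\eqref{eq:hyperbolic_target_2} and a candidate eigenvector $z=(\phi,\psi)$, compute $\frac{d}{dt}\langle\gamma(t),z\rangle=\bar s\langle\gamma,z\rangle+\mathcal B_1^\star z\cdot U$, and take Laplace transforms. This expresses the obstruction in terms of $\widehat X=\widehat{\beta(\cdot,1)}$ and $\widehat U$ and identifies $\xi\qhat(s)$ and $\xi\phat(s)$ with $\mathcal K$ and $\mathcal P$ applied to $\zeta$. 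Working for $\Re s>-\omega$ keeps all exponentials integrable.
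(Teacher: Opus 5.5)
Your proposal is essentially the paper's proof. It uses the same steps: contraposition from a left-kernel vector $\eta$ of $[\qhat(s),-\phat(s)]$; solving the decoupled adjoint transport ODEs from $\Lambda^+\phi(1)=R^\star\eta$ and $\Lambda^-\psi(0)=Q^\star\Lambda^+\phi(0)+\int_0^1G^\star\phi$; identifying $\zeta(\phi,\psi)=\eta-\qhat(s)^\star\eta$ and $\mathcal B_1^\star(\phi,\psi)=\phat(s)^\star\eta$ by redoing the characteristic computations of Lemma~\ref{lem_derivation_IDE_closed_loop} (a step the paper also only asserts); and nontriviality from $\zeta(\phi,\psi)=\eta\neq0$.

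If you use your duality shortcut for that identification, do it at fixed $s$ on the stationary boundary-value problem with $\widehat\beta(1)$ and $\widehat U$ as independent data, not on trajectories of~\eqref{eq:hyperbolic_target_2}. On trajectories the two are coupled, and your formula for $\frac{d}{dt}\langle\gamma,z\rangle$ presupposes the eigen-relation you are trying to prove. With bilinear pairings, integration by parts gives $\langle\zeta,X_{\mathrm{out}}\rangle-\langle\zeta(\phi,\psi),\widehat\beta(1)\rangle=\langle B_1^\star\zeta+B_0^\star\Lambda^+\phi(0)+\int_0^1(h_\alpha^\star\phi+h_\beta^\star\psi),\widehat U\rangle$. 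Here $X_{\mathrm{out}}:=R\widehat\alpha(1)+\int_0^1(F_\alpha\widehat\alpha+F_\beta\widehat\beta)+B_1\widehat U$, which equals $(\operatorname{Id}_m-\qhat(s))\widehat\beta(1)+\phat(s)\widehat U$ by that lemma's computation. Both identifications then follow at once.
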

The proof of Proposition~\ref{prop:full_rank_ide} is postponed to Appendix~\ref{section:preuve_prop_full_rank}.
Let \begin{equation} \label{eq:defomega2}\omega_2 < \min(\omega_1, \omega)\end{equation} with $\omega$ from Proposition~\ref{prop:full_rank_ide} and $\omega_1$ from Corollary~\ref{coro:pp_ide_stable}. We now have all the tools needed to design the controller gains.

\section{Design of the control Law}
\label{section:design_command_law}
\subsection{Reduction to a Single Control Input}
The aim of this subsection is to select the gains
$v \in \mathbb{R}^{d\times d}$ lower triangular, $T_2,\dots,T_d > 0$, and $u \in \R^{m\times d-1}$
such that the controls $U_2,\ldots,U_{d}$ defined in~\eqref{eq:Ui_rank_reduction}
yield an IDE involving only $X$ and $U_1$, while satisfying a stabilizability
rank condition analogous to~\eqref{eq:ide-rank-ass}.
The methodology extends the approach of~\cite{braun_stab_3_2inputs}, where the
stabilization of a scalar IDE with two inputs was reduced to that of a one-input
IDE satisfying a maximal rank condition; see also~\cite{quadrat2004general} for
related algebraic rank conditions over rings of delay operators.
For any $T>0$, define for all $s\in \mathbb C$
\begin{equation}\label{def:K_T}
K_T(s)
=
\int_0^T e^{-ts}\,dt
=
\frac{1-e^{-Ts}}{s},
\qquad K_T(0)=T.
\end{equation}
\begin{thm}\label{thm:rank_reduction}
Suppose that Assumptions~\ref{Assum_OL_stab} and~\ref{assum:stabilizability_edp}
hold. Then, for every $\varepsilon>0$, there exists a countable nowhere dense set
$\mathcal{T}_{\mathrm{bad}}\subset\mathbb{R}_+$ such that, for every
$T\in(0,\varepsilon)\setminus\mathcal{T}_{\mathrm{bad}}$, there exists a set
$\mathcal{F}_{\mathrm{bad}}\subset\mathbb{R}^m$ contained in a finite union of
proper hyperplanes of $\mathbb{R}^m$, such that the following holds.
For every $u\in\mathbb{R}^m\setminus\mathcal{F}_{\mathrm{bad}}$, there exists a
finite set $\mathcal{L}_{\mathrm{bad}}(u)\subset\mathbb{R}^{d-1}$ such that, for
every $v\in\mathbb{R}^{d-1}\setminus\mathcal{L}_{\mathrm{bad}}(u)$,
\[
\rank M_{u,v,T}(s)=m, \qquad \forall\,s\in\mathbb{C}_{\omega_2},
\]
where
$
M_{u,v,T}(s) 
= \Bigl[\,\qhat(s)-K_T(s)\phat_d(s)u^\star,\;
\phat_1(s)-v_1\phat_d(s),\;\ldots,\;
\phat_{d-1}(s)-v_{d-1}\phat_d(s)\Bigr].
$
In particular, $T$ may be chosen outside a countable non-dense set, then $u$
outside a finite union of hyperplanes, and then $v$ outside a finite set
depending on $u$ (see Algorithm~\ref{algo:gains_rank_reduction} for practical guidance).
\end{thm}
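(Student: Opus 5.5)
The plan is to reduce the rank condition on $\mathbb{C}_{\omega_2}$ to finitely many points, and then to remove the parameters $T$, $u$, $v$ in that order by genericity arguments.

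\emph{Step 1: the first block of $M_{u,v,T}$ and its determinant.} By the matrix determinant lemma,
\[
F_{u,T}(s):=\det\bigl(\qhat(s)-K_T(s)\phat_d(s)u^\star\bigr)=\det\qhat(s)-K_T(s)\,u^\star\operatorname{adj}(\qhat(s))\,\phat_d(s).
\]
This expression is analytic in $s$ and affine in $u$. Several facts hold on the closed half-plane $\overline{\mathbb{C}_{\omega_2}}$, which lies inside $\mathbb{C}_{\min(\omega_1,\omega)}$ by \eqref{eq:defomega2}:
\begin{itemize}
\item $\Nhat(s)\to0$ and $\Mhat(s)\to 0$ as $|s|\to\infty$, by a Riemann--Lebesgue argument on the compactly supported $L^2$ kernels.
\item $\phat$ and $\operatorname{adj}\qhat$ are bounded.
\item $|K_T(s)|\le C_T/|s|$.
\end{itemize}
Combining these with Corollary~\ref{coro:pp_ide_stable}, there is $R>0$ such that $|\det\qhat(s)|\ge\infpp/2$ and $|F_{u,T}(s)|\ge\infpp/4$ for $|s|\ge R$, for each fixed $u$. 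Hence $\det\qhat$ has finitely many zeros $s_1,\dots,s_r$ in $\overline{\mathbb{C}_{\omega_2}}$. For each fixed $(u,T)$, the zeros of $F_{u,T}$ there are also finite, because $F_{u,T}$ is analytic and not identically zero. The rank condition therefore only has to be checked at finitely many points: away from the zeros of $F_{u,T}$, the first block alone already has rank $m$.

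\emph{Step 2: points where the first block has corank one.} Let $s_0$ be a zero of $F_{u,T}$ where the first block has rank $m-1$, and let $\eta$ span its left kernel. Suppose $\eta^\star(\phat_i(s_0)-v_i\phat_d(s_0))=0$ for all $i$ and also $\eta^\star\phat_d(s_0)=0$. Then $\eta^\star\phat_i(s_0)=0$ for all $i\le d$, and $\eta^\star\qhat(s_0)x=K_T(s_0)\,\eta^\star\phat_d(s_0)\,u^\star x=0$ for every $x$. So $\eta^\star[\qhat,-\phat](s_0)=0$, which contradicts Proposition~\ref{prop:full_rank_ide}, since $\omega_2<\omega$. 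Consequently:
\begin{itemize}
\item if $\eta^\star\phat_d(s_0)\neq0$, the only bad $v$ is the single vector $v_i=\eta^\star\phat_i(s_0)/\eta^\star\phat_d(s_0)$;
\item if $\eta^\star\phat_d(s_0)=0$, some $\eta^\star\phat_i(s_0)\ne0$ with $i<d$, and no $v$ is bad.
\end{itemize}
Taking the union over the finitely many zeros of $F_{u,T}$ gives the finite set $\mathcal{L}_{\mathrm{bad}}(u)$. Real $v$ only has to avoid the real points among these.

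\emph{Step 3: choice of $T$ and $u$ at the points $s_j$, which is the main obstacle.} At the points $s_j$, $\qhat$ itself may lose rank, possibly by more than one, and then Step~2 does not apply directly.
\begin{itemize}
\item First, I require $K_T(s_j)\neq0$ for all $j$. Since $K_T(s)=0$ iff $s\in\frac{2\pi i}{T}\mathbb{Z}^*$, this excludes only a countable, nowhere dense set $\mathcal{T}_{\mathrm{bad}}$ of $T$, determined by the finitely many $s_j$ on the imaginary axis.
\item When $\operatorname{rank}\qhat(s_j)=m-1$ and $\operatorname{adj}(\qhat(s_j))\phat_d(s_j)\ne0$, the condition $F_{u,T}(s_j)\neq0$ is a nontrivial complex-linear condition in $u$. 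For real $u$ it defines at most a proper hyperplane, and these hyperplanes make up $\mathcal{F}_{\mathrm{bad}}$.
\item In the remaining degenerate cases, namely $\operatorname{rank}\qhat(s_j)\le m-2$, or $\phat_d(s_j)\in\operatorname{range}\qhat(s_j)$, I would argue through maximal minors. The $m\times m$ minors of $M_{u,v,T}(s_j)$ are polynomials in $(u,v)$. If one of them is nonzero at a single complex point, then it is a nonzero polynomial, so it vanishes only on a proper algebraic set of real $(u,v)$.
\end{itemize}

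\emph{Step 4: the existence point for the degenerate case.} To produce that complex point I would use the rank condition of Proposition~\ref{prop:full_rank_ide} at $s_j$:
\begin{itemize}
\item If $\phat_d(s_j)\in\operatorname{span}(\qhat(s_j),\phat_1(s_j),\dots,\phat_{d-1}(s_j))$, then $u=0,v=0$ already gives rank $m$.
\item Otherwise, $\operatorname{span}(\qhat,\phat_{<d})(s_j)$ has codimension one, with left kernel spanned by $\eta$ and $\eta^\star\phat_d(s_j)\ne0$. Take $v=0$ and any $u$ that is not orthogonal to $\ker\qhat(s_j)$. A short computation on the left kernel then shows that the rank is $m$.
\end{itemize}
What still needs care is keeping the ordering of quantifiers ($T$, then $u$, then $v$) and showing that the bad $u$-set is contained in finitely many hyperplanes, rather than merely in an algebraic set. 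For the latter I would try to exploit the affine dependence on $u$ and, where needed, replace the degenerate points by an iterated rank-one argument.
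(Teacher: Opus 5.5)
Your Steps~1--2 are sound and match the paper's treatment of the points off $Z_q$, up to packaging. Away from the zeros $s_j$ of $\det\widehat q$, the first block is a rank-one perturbation of an invertible matrix, so it has corank at most one. Your left-kernel analysis then gives at most one bad $v$ per zero of $F_{u,T}$. The paper does the same with $\psi_u$, and indeed $F_{u,T}=-\det\widehat q\,\psi_u$ there.

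The gap is at the points $s_j$, which you leave open yourself. Your minor-based argument only shows that, at each $s_j$, the bad pairs $(u,v)$ form a proper real algebraic subset of $\mathbb{R}^m\times\mathbb{R}^{d-1}$. That gives neither conclusion of the theorem:
\begin{itemize}
\item the excluded $u$ are not shown to lie in finitely many hyperplanes;
\item for a fixed admissible $u$, the set of bad $v$ is only a proper algebraic subset of $\mathbb{R}^{d-1}$, not a finite set.
\end{itemize}
This is not a formality. If, for example, $\operatorname{rank}\widehat q(s_j)\le m-3$, the first block has corank at least two at $s_j$ for every $u$, so your Step~2 never applies there. The $v$ with $\xi^\star\widehat p_i=v_i\,\xi^\star\widehat p_d$ for some left-null vector $\xi$ of the first block then range, a priori, over a positive-dimensional family.

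The missing idea is already in your Step~4, but you used it too narrowly: it needs neither $v=0$ nor the codimension-one hypothesis. For each $s_j$, fix some $a_j\in\ker\widehat q(s_j)\setminus\{0\}$, and suppose $\xi^\star M_{u,v,T}(s_j)=0$.
\begin{itemize}
\item Multiplying the first block by $a_j$ gives $0=-K_T(s_j)\,\bigl(\xi^\star\widehat p_d(s_j)\bigr)\,(u^\star a_j)$.
\item Since $K_T(s_j)\neq0$ by the choice of $T$, $u^\star a_j\neq 0$ forces $\xi^\star\widehat p_d(s_j)=0$.
\item The remaining columns then give $\xi^\star\widehat q(s_j)=0$ and $\xi^\star\widehat p_i(s_j)=0$ for all $i$, so $\xi=0$ by Proposition~\ref{prop:full_rank_ide}.
\end{itemize}
Hence, for every $v$ and with no case distinction, no rank loss occurs at $s_j$ once $u^\star a_j\neq0$. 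For real $u$, the excluded set lies in the proper hyperplane $\{u^\star\Re a_j=0\}$ or $\{u^\star\Im a_j=0\}$, whichever is nontrivial.

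Taking the union over the finitely many $s_j$ gives $\mathcal F_{\mathrm{bad}}$, and your Step~2 then supplies the finite set $\mathcal L_{\mathrm{bad}}(u)$. This is exactly the paper's Step~2. It makes both the second bullet of your Step~3 and the algebraic-set argument unnecessary.
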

\begin{proof}
    The proof is postponed to Appendix~\ref{section:proof_thm_stable_rank}.
\end{proof}
\begin{algorithm}[H]
\caption{Choosing the gains for $U_d, \dots , U_2$} \label{algo:gains_rank_reduction}
To find the gains in~\eqref{eq:Ui_rank_reduction}, proceed as follows, for
$i=d,d-1,\ldots,2$:
\begin{enumerate}
  \item Choose a small number $T_i \in (0,\varepsilon)$, out of the negligible set $\mathcal{T}_{\mathrm{bad}}$.
  \item Choose $u_i\in\mathbb{R}^m$, out of the negligible set $\mathcal{F}_{\mathrm{bad}}$
  \item Choose $v_i=(v_{ij})\in\mathbb{R}^{i-1}$, out of the finite set
        $\mathcal{L}_{\mathrm{bad}}(u_i)$.
\end{enumerate}
\end{algorithm}
Iterating Theorem~\ref{thm:rank_reduction} $d-1$ times yields a single-input IDE
in which the only remaining free control is $U_1$:
\begin{equation}\label{eq:ide_single_input}
\begin{aligned}
&X(t) = \sum_{k=1}^l A_k X(t-\tau_k)
+ \int_0^{\tau} N_f(\nu)X(t-\nu)\,d\nu \\
&+ \sum_{i=1}^I \bar{B}_i U_1(t-\sigma_i)
+ \int_0^{\tau} M_f(\nu)U_1(t-\nu)\,d\nu
+ \bar{B}_0 U_1(t),
\end{aligned}
\end{equation}
where $N_f$ and $M_f$ are $L^2$ kernels, $\tau>\tau_*$,
$\bar{B}_i\in\mathbb{R}^m$, $\sigma_i>0$, and $I>n$.
Note that $K_T$ in~\eqref{def:K_T} does not alter the principal part of the IDE,
so the matrices $A_k$ and delays $\tau_k$ are unchanged from~\eqref{eq:ide-input-target}
and Corollary~\ref{coro:pp_ide_stable} remains applicable.
Moreover, the maximal rank condition with a single input holds (see Remark~\ref{rem:valid_rank_reduction}):
\begin{equation}\label{eq:max_rank_final}
\rank[\,\qhat_f(s),\,-\phat_f(s)\,] = m, \qquad \forall\,s\in\mathbb{C}_{\omega_2},
\end{equation}
where $\omega_2$ is defined in~\eqref{eq:defomega2}, 
\begin{equation*}
\qhat_f(s) := \Delta_0(s) - \Nhat_f(s) \in \C^{m\times m},
\end{equation*}
\begin{equation*}
\phat_f(s) := \bar{B}_0 + \sum_{i=1}^I \bar{B}_i\,e^{-s\sigma_i} + \Mhat_f(s)\in \C^{m}.
\end{equation*}
{At this stage, we have fixed the components $u,v,T$ of the gains $\gains \in \gainspace$ of the dynamical feedback~\eqref{eq:dynamical_feedback}.}

\subsection{Design of the gains for \texorpdfstring{$U_1$}{U1}: a Corona-based approach}
This subsection is devoted to the construction of the gains $g$ and $f$ such that the closed-loop system formed by~\eqref{eq:U1_form} and~\eqref{eq:ide_single_input} is exponentially stable. This problem was recently addressed in~\cite{braun_corona}; for completeness, we recall here the main steps of the methodology.
\begin{thm}\label{thm:gain_Corona_main} Under Assumptions~\ref{Assum_OL_stab} and~\ref{assum:stabilizability_edp}, there exist controller gains $$(g,f)\in \prod_{i=1}^m L^2\bigl((0,S_i),\mathbb{R}\bigr)\times L^2\bigl((0,S_{m+1}),\R\bigl)$$ such that the IDE closed-loop formed by~\eqref{eq:U1_form} and \eqref{eq:ide_single_input} is exponentially stable in the sense of Definition~\ref{def:stability_ide}.
\end{thm}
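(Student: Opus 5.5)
We will work entirely in the Laplace domain. Taking the Laplace transform of the closed loop \eqref{eq:U1_form}--\eqref{eq:ide_single_input} gives
\[
\qhat_f(s)X(s)=\phat_f(s)U_1(s),\qquad \bigl(1-\Lap f(s)\bigr)U_1(s)=\Lap g(s)^\star X(s),
\]
where $\Lap g(s):=(\Lap g_1(s),\dots,\Lap g_m(s))^\star$ and each $\Lap g_i$, $\Lap f$ is an entire function: it is the Laplace transform of an $L^2$ function with compact support. The characteristic function of the closed loop is the determinant of the block matrix
\[
\begin{pmatrix}\qhat_f(s) & -\phat_f(s)\\ -\Lap g(s)^\star & 1-\Lap f(s)\end{pmatrix}.
\]
By the spectral criterion of~\cite[Theorem 6]{braun2026spectralexponentialstabilitycriterion}, exponential stability holds if this determinant has no zero in some half-plane $\C_{\omega'}$. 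Since the principal part of the closed loop is unchanged, a bound away from zero on that half-plane suffices. The principal part is $\Delta_0$ together with $1$, and the kernels are $L^2$, so Corollary~\ref{coro:pp_ide_stable} and~\cite[Proposition A.5]{felipe} apply.

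The design then reduces to an interpolation (B\'ezout-type) equation. Using the adjugate, the determinant equals
\[
\det\qhat_f(s)\bigl(1-\Lap f(s)\bigr)-\Lap g(s)^\star\operatorname{adj}(\qhat_f(s))\phat_f(s).
\]
Set $a(s):=\det\qhat_f(s)$ and $b(s):=\operatorname{adj}(\qhat_f(s))\phat_f(s)\in\C^m$. We seek entire functions of exponential type $\Lap f$, $\Lap g_i$, each the Laplace transform of an $L^2$ function on a finite interval, such that
\[
a(1-\Lap f)-\Lap g^\star b=1,
\]
or equal to any function bounded away from zero on $\C_{\omega_2}$. The maximal rank condition \eqref{eq:max_rank_final} gives that $a$ and the entries of $b$ have no common zero in $\C_{\omega_2}$. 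At a zero $s_0$ of $a$, the matrix $\qhat_f(s_0)$ is singular and $\phat_f(s_0)\notin\operatorname{Im}\qhat_f(s_0)$. If $\operatorname{rank}\qhat_f(s_0)=m-1$, then $\operatorname{adj}\qhat_f(s_0)$ is rank one and proportional to $x y^\star$ with $y^\star\qhat_f(s_0)=0$, so $y^\star\phat_f(s_0)\neq0$ gives $b(s_0)\neq0$. The case of rank $\le m-2$ needs the finer treatment of~\cite{braun_corona}. There one works with multiplicities and replaces $b$ by a cofactor-type combination, or one first perturbs generically, in the spirit of the rank reduction of Theorem~\ref{thm:rank_reduction}.

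Next we quantify this into a uniform corona condition $|a(s)|+|b(s)|\ge\delta>0$ on $\C_{\omega_2}$. For $\Re s$ large, $a$ is close to $\det\Delta_0$ because $\Nhat_f\to0$, and Corollary~\ref{coro:pp_ide_stable} bounds it below. On the remaining vertical strip, almost periodicity of the exponential-polynomial parts and the Riemann--Lebesgue decay of the $L^2$ kernels allow a compactness argument, carried out as in~\cite{braun_corona}. Carleson's Corona theorem~\cite{Carleson1962} on $H^\infty(\C_{\omega_2})$ then yields $H^\infty$ solutions $(\Lap\phi,\Lap\eta)$ of $a\Lap\phi-\Lap\eta^\star b=1$.

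The remaining step is realizability, which I expect to be the main obstacle. An $H^\infty$ solution is in general not the Laplace transform of a compactly supported $L^2$ kernel, and the required form is $\Lap\phi=1-\Lap f$ with $f$ supported on $(0,S_{m+1})$ and $g_i$ supported on $(0,S_i)$. Following~\cite{braun_corona}, I would first approximate the Corona solutions, uniformly on $\C_{\omega_2}$, by entire functions of exponential type lying in the appropriate class. Then I would exploit the fact that $a$ equals $1$ plus exponential terms plus a small remainder, so the constant term of $\Lap\phi$ can be normalized to $1$. The perturbed characteristic function stays bounded away from zero by Rouch\'e-type robustness. Paley--Wiener then provides the kernels $g_i$ and $f$, with $S_i$ and $S_{m+1}$ chosen large enough, and Theorem~\ref{thm:equiv_stab_ide_edp} together with the spectral criterion concludes.
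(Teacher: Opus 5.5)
\textbf{The gap.} The problem is the interpolation target you chose, and it is what makes your realizability step fail. The equation $a(1-\Lap f)-\Lap g^\star b=1$ has no solution, not even an approximate one in sup norm, with $f$, $g_i$ compactly supported $L^2$ kernels. On any vertical line in $\C_{\omega_2}$, the Riemann--Lebesgue lemma gives $\Lap f,\Lap g,\Nhat_f,\Mhat_f\to 0$ as $|\Im s|\to\infty$, while $b$ stays bounded. So the left-hand side differs from $\det\Delta_0(s)$ by a quantity tending to $0$. But $\det\Delta_0$ is an almost periodic exponential polynomial, and it does not tend to $1$ unless $\det\Delta_0\equiv 1$. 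Hence an $H^\infty$ Corona solution of $a\Lap\phi-\Lap\eta^\star b=1$ cannot in general be uniformly approximated by pairs $(1-\Lap f,\Lap g)$ of the required form. Normalizing a constant term and invoking Rouch\'e does not repair this. As written, your last paragraph does not produce admissible gains.

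\textbf{The missing idea.} What the paper does, following \cite{braun_corona}, is keep the principal part as the target instead of $1$. The characteristic function is $\det\Delta_0$ minus a remainder, so one requires the remainder to vanish.
For $m=1$ this reads $\qhat_f\Lap f+\phat_f\Lap g=-\Nhat_f$. In your adjugate notation it is $a\Lap f+\Lap g^\star b=a-\det\Delta_0$. The right-hand side is the Laplace transform of a compactly supported $L^2$ function (argue as in Lemma~\ref{lem:dev_det_q}), so it lies in $H^2(\C_{\omega_2})$.

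Now take the $H^\infty$ Corona solutions $(\phi_0,\phi)$ of $a\phi_0+\phi^\star b=1$ and multiply them by this $H^2$ datum. This gives $H^2$ solutions. By Paley--Wiener they are Laplace transforms of kernels in $L^2_{\omega_2}$, which can be made real-valued by symmetrizing under $s\mapsto\bar s$. With these kernels the closed-loop characteristic function is exactly $\det\Delta_0$, which is bounded below by $\infpp$ by Corollary~\ref{coro:pp_ide_stable}.

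Realizability then reduces to truncating exponentially weighted tails. For $v\in L^2_{\omega_2}$ and $\omega_3<\omega_2$, the Laplace transform of $v\mathbf{1}_{(S,\infty)}$ is uniformly $O(e^{-(\omega_2-\omega_3)S})$ on $\C_{\omega_3}$. So for $S$ large, the truncated gains satisfy the approximate Corona inequality \eqref{eq:Corona_approx}.

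\textbf{A smaller point.} Your rank $\le m-2$ case cannot occur. Since $\phat_f$ is a single column, \eqref{eq:max_rank_final} forces $\rank\qhat_f(s_0)\ge m-1$. Your rank-one adjugate argument therefore already covers every zero of $a$, and the uniform corona bound follows.
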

\begin{proof}
    The complete proof for $m\in \N^\star$ is given in~\cite[Section~V]{braun_corona}; we restate the main argument here for $m=1$.
    Taking the Laplace transform of~\eqref{eq:U1_form}-\eqref{eq:ide_single_input}
yields
\begin{equation*}
\begin{bmatrix}
\widehat{q}_f(s) & -\widehat{p}_f(s)\\
-\widehat{g}(s) & 1-\widehat{f}(s)
\end{bmatrix}
\begin{bmatrix}\widehat{X}(s)\\ \widehat{U}_1(s)\end{bmatrix}=0,
\end{equation*}
hence the associated characteristic equation is
\begin{align}
0 &= \det\begin{bmatrix}
\widehat{q}_f(s) & -\widehat{p}_f(s)\\
-\widehat{g}(s) & 1-\widehat{f}(s)
\end{bmatrix} \label{eq:ide_characteristic}\\
&= 1 - \sum_{k=1}^l A_k e^{-\tau_k s}
- \widehat{N}_f(s)
- \widehat{q}_f(s)\widehat{f}(s)
- \widehat{p}_f(s)\widehat{g}(s). \nonumber
\end{align}
We use the following spectral characterization.
\begin{lem}[{\cite[Theorem~6]{braun2026spectralexponentialstabilitycriterion}}]
\label{stable_lemma}
The closed-loop~\eqref{eq:U1_form}-\eqref{eq:ide_single_input} is exponentially
stable if and only if there exists $\omega>0$ such that~\eqref{eq:ide_characteristic}
has no solution in $\mathbb{C}_\omega$.
\end{lem}
By Corollary~\ref{coro:pp_ide_stable} and Lemma~\ref{stable_lemma}, it suffices
to find $g$ and $f$ such that
\begin{equation}
\label{eq:Corona_approx}
\bigl|\widehat{q}_f(s)\widehat{f}(s)
+ \widehat{p}_f(s)\widehat{g}(s)
+ \widehat{N}_f(s)\bigr| < \infpp.
\end{equation}
This is an approximate Corona problem~\cite[Chapter~12]{Duren2000Hp};
the standard Corona problem corresponds to~\eqref{eq:Corona_approx} with $\infpp=0$ {and the inequality replaced by an equality.}
By~\cite[Theorem~8]{braun_corona}, there exists a unique minimal-norm solution
\[
(g^\star,f^\star)\in L^2_{\omega_2}\bigl((0,\infty),\mathbb{R}^2\bigr),
\qquad 0<\omega_2<\omega_1,
\]
of~\eqref{eq:Corona_approx} with $\infpp=0$.
Since for all $0<\omega_3<\omega_2$, all
$v\in L^2_{\omega_2}((0,\infty),\mathbb{C})$, and all $S\geq 0$,
\begin{equation*}
\|v\|_{L^2_{\omega_3}(S,\infty)}
\leq e^{-(\omega_2-\omega_3)S}\|v\|_{L^2_{\omega_2}},
\end{equation*}
the tails of $(g^\star,f^\star)$ can be truncated to obtain compactly supported gains.
Specifically, a solution of~\eqref{eq:Corona_approx} is given by
\[
g = g^\star\,\mathbf{1}_{[0,S_1]}, \qquad f = f^\star\,\mathbf{1}_{[0,S_2]},
\]
where $S_1$ and $S_2$ satisfy the lower bound of~\cite[Theorem~8]{braun_corona}.
\end{proof}

\subsection{Proof of the Main Result}
\label{section:proof_main_result}
\begin{proof}
We now prove Theorem~\ref{main_result}. Although the closed-loop formed by~\eqref{eq:hyperbolic_couple} and \eqref{eq:dynamical_feedback} does not fall exactly within the framework considered in~\cite[Appendix A]{bastin_coron}, owing to the presence of additional integral terms, the analytical arguments developed therein can be adapted to the present setting. In particular, the well-posedness analysis may be carried out by following the same semigroup-based approach, relying on the Lumer-Phillips theorem~\cite{lumer1961dissipative}.
By the invertibility of the backstepping transformation~$\mathcal{T}$ established in Lemma~\ref{lem:backsteppin_transform}, the exponential stability of the original closed-loop formed by~\eqref{eq:hyperbolic_couple} and \eqref{eq:dynamical_feedback} is equivalent to exponential stability of the target system~\eqref{eq:hyperbolic_target_2} and~\eqref{eq:dynamical_feedback}. By Theorem~\ref{thm:equiv_stab_ide_edp}, the latter is in turn equivalent to exponential stability of the IDE closed-loop~\eqref{eq:ide-input-target}-\eqref{eq:U1_form}. 
Following Algorithm~\ref{algo:gains_rank_reduction}, we obtain gains $v\in\mathbb{R}^{(d-1)\times(d-1)}$, $u\in\mathbb{R}^{m\times(d-1)}$, $T_i>0$, $S_i>0$, yielding the single-input IDE~\eqref{eq:ide_single_input} together with the maximal rank condition~\eqref{eq:max_rank_final}. Finally, Theorem~\ref{thm:gain_Corona_main} guarantees the existence of $L^2$ gains $g,f$ that render the  closed-loop IDE formed by~\eqref{eq:U1_form} and \eqref{eq:ide_single_input} exponentially stable. For such gains, using Theorem~\ref{thm:equiv_stab_ide_edp}, the original closed-loop formed by~\eqref{eq:hyperbolic_couple} and \eqref{eq:dynamical_feedback} is therefore exponentially stable.
\end{proof}
\section{Application to Network Stabilization}
\label{section:network}
In this section, we show how the proposed method applies to the stabilization of networks of first-order hyperbolic PDEs. The construction relies on a pairwise consistency assumption, which defines a signed graph associated with the network. Cutting and folding transformations then recast the original network as a PDE system of the form~\eqref{eq:hyperbolic_couple}. Further details about signed graphs can be found in~\cite{harary1953notion_graph,algo_balance_graph}. 
\subsection{Port-Networks of PDEs}

We introduce the concept of Port-Networks 
to obtain a unified framework for the stabilization of networks of hyperbolic PDEs~\cite{redaud2024output,auriol_hdr,braun_stab_3_2inputs}. We consider a collection of $n_g \geq 2$ subsystems of first-order hyperbolic PDEs coupled at the boundary.
{Each subsystem takes the form of~\eqref{eq:hyperbolic_couple}, i.e.} for all $t>0$ and all $x\in [0,1]$, 
\begin{equation} \label{eq:system_i_graph}
\partial_t w_i(t,x) + \Lambda_i\,\partial_x w_i(t,x)
= \Sigma_i(x)\,w_i(t,x) + h_i(x)\,U(t),
\end{equation}
where $w_i = \col(w_i^+,w_i^-)\in\mathbb{R}^{n_i+m_i}$ $((n_i,m_i)\in \mathbb{N}\backslash\{0\})$ and all parameters are
defined analogously to~\eqref{eq:hyperbolic_couple}.
{Each subsystem may contain boundary internal couplings of the same form as the boundary couplings $Q$ and $R$ in~\eqref{eq:hyperbolic_couple}. Since the subsystems of the network are interconnected only through their boundaries, we will represent these internal couplings by the diagonal terms of the network interconnection matrix. The boundary interconnections are formalized by assigning to each subsystem two ports, $p_i^{+1}$ and $p_i^{-1}$, corresponding respectively to $x=0$ and $x=1$; see Figure~\ref{fig:generic_vertex_hyperbolic}.}
At port $p_i^{+1}$, the incoming and outgoing variables are respectively
\[
r_i^{+1} := w_i^+(t,0)\in\mathbb{R}^{n_i},
\qquad
y_i^{+1} := w_i^-(t,0)\in\mathbb{R}^{m_i}.
\]
At port $p_i^{-1}$, they are
\[
r_i^{-1} := w_i^-(t,1)\in\mathbb{R}^{m_i},
\qquad
y_i^{-1} := w_i^+(t,1)\in\mathbb{R}^{n_i}.
\]
Define the aggregated incoming and outgoing vectors
\[
r^{\pm 1} := \col(r_1^{\pm 1},\ldots,r_{n_g}^{\pm 1}),
\qquad
y^{\pm 1} := \col(y_1^{\pm 1},\ldots,y_{n_g}^{\pm 1}),
\]
and set $r := \col(r^{+1},r^{-1})$, $y := \col(y^{+1},y^{-1})$.
The boundary interconnection of the network is written as
\begin{equation} \label{eq:boundary_couplings_graph}
r = K\,y + \Bgraph\,U(t), \quad \Bgraph\in\mathbb{R}^{\bigl(\sum_{i=1}^{n_g}(n_i+m_i)\bigr)\times d}
\end{equation}
or equivalently
\[
\begin{pmatrix} r^{+1}\\ r^{-1} \end{pmatrix}
=
\begin{pmatrix} K_{++} & K_{+-}\\ K_{-+} & K_{--} \end{pmatrix}
\begin{pmatrix} y^{+1}\\ y^{-1} \end{pmatrix}
+
\begin{pmatrix} B_{\mathrm{g},0}\\ B_{\mathrm{g},1} \end{pmatrix}U(t).
\]
The matrix $K$ is block-structured as
\[
K_{lk} = \bigl[K_{(i,l),(j,k)}\bigr]_{i,j\in\{1,\ldots,n_g\}},
\qquad (l,k)\in\{+1,-1\}^2,
\]
where $K_{(i,l),(j,k)}$ maps the outgoing signal $y_j^k$ at port $p_j^k$ to
the incoming signal $r_i^l$ at port $p_i^l$.
Cross-port self-couplings are excluded since every subsystem is assumed to be already in the canonical form~\eqref{eq:hyperbolic_couple} for the folding method to apply, i.e. for all $i\in\{1,\ldots,n_g\}$,
\begin{equation} \label{eq:self_coupling_constraint}
K_{(i,+1),(i,-1)} = 0,
\qquad
K_{(i,-1),(i,+1)} = 0.
\end{equation}
\begin{figure}[htb]
\centering
\scalebox{0.65}{
\begin{tikzpicture}[>=Stealth]

  \PortVertex{0}{0}{i}{1}

  \draw[<->,very thick] (1.0,0) -- (2.4,0);
  \node[above] at (1.7,0) {\small equivalent};

  \begin{scope}[shift={(5.0,-0.1)}]
    \GenericHyperbolicVertex{(0,0)}
  \end{scope}

\end{tikzpicture}}
\caption{Equivalence between a port-network vertex \(i\) and a hyperbolic subsystem.}
\label{fig:generic_vertex_hyperbolic}
\end{figure}

\begin{defn}[Port-Network]
A port-network is a collection of $n_g\geq 2$ subsystems~\eqref{eq:system_i_graph}
coupled at the boundary via~\eqref{eq:boundary_couplings_graph} and
subject to~\eqref{eq:self_coupling_constraint}. 
\end{defn}

\subsection{Network Objective and Associated Signed Graph}
The objective of this section is to transform a port-network into a system like~\eqref{eq:hyperbolic_couple}. More precisely, we seek a cutting/folding transformation $\mathcal{F}$ (see Figures~\ref{fig:folding_transformation}-\ref{fig:cutting_transformation}), in the spirit of~\cite{folding_vazquez, auriol2022folding}, such that the transformed port network has boundary conditions of the form~\eqref{eq:boundary_couplings_graph} with a boundary coupling matrix that is block-diagonal by port type, i.e.,
\[
K_{+-} = 0, \qquad K_{-+} = 0.
\]
The transformation $\mathcal{F}$ is required to be linear, bounded, and boundedly invertible on the state space.
{This establishes the equivalence between the original and transformed networks at the level of abstract evolution systems. In particular, the folding and cutting transformations preserve both well-posedness and stabilizability. Therefore, stabilizability assumptions such as Assumption~\ref{Assum_OL_stab}, together with robustness assumptions such as Assumption~\ref{assum:stabilizability_edp}, remain valid for the folded system whenever they hold for the original port network.}
The transformation $\mathcal F$ can be constructed under a pairwise consistency condition on the boundary couplings. This condition allows one to associate a well-defined signed graph with the port network, whose edge labels encode whether two coupled ports must be folded in the same or in the opposite direction.

\begin{defn}[Signatures] \label{def:signature}
Whenever $K_{(i,\ell),(j,k)}\neq 0$, we define an undirected constraint edge between nodes $i$ and $j$.
The \emph{signature} of this edge is
\[
\sigma_{\ell k}(i,j) := \ell \times k \in \{-1,+1\}.
\]
The signature of a path or a cycle is the product of the edge signatures along it.
\end{defn}
Consequently, $\sigma_{\ell k}(i,j)=+1$ means the coupling connects ports {on the same boundary} 
\[
p_i^{+1}\leftrightarrow p_j^{+1}
\quad\text{or}\quad
p_i^{-1}\leftrightarrow p_j^{-1},
\]
whereas $\sigma_{\ell k}(i,j)=-1$ means it connects ports of opposite types,
\[
p_i^{+1}\leftrightarrow p_j^{-1}
\quad\text{or}\quad
p_i^{-1}\leftrightarrow p_j^{+1}.
\]
If several nonzero blocks couple the same pair $(i,j)$, they may \textit{a priori}
induce different signatures.
The following condition excludes such contradictions.
\begin{assum}[Pairwise consistency]\label{assump:pairwise_consistency}
For every pair $(i,j)\in\{1,\ldots,n_g\}^2$, all nonzero boundary couplings
between subsystems $i$ and $j$ induce the same signature.
That is, for all $(\ell,k),(\ell',k')\in\{-1,+1\}^2$, if
$K_{(i,\ell),(j,k)}\neq 0$ and $K_{(i,\ell'),(j,k')}\neq 0$, then
\[
\sigma_{\ell k}(i,j) = \sigma_{\ell' k'}(i,j).
\]
We denote this common value by $\sigma(i,j)\in\{-1,+1\}$.
\end{assum}
{As an example, the port-network represented in Figure~\ref{fig:pairwise_consistency_counterexample_two_vertices} does not satisfy Assumption~\ref{assump:pairwise_consistency}.}
\begin{figure}[htb]
\centering
\scalebox{0.65}{
\begin{tikzpicture}[>=Stealth]

  \PortVertex{0}{0}{1}{1}
  \PortVertex{5}{0}{2}{1}

  \coordinate (p1p) at (-0.70,0);
  \coordinate (p1m) at ( 0.70,0);

  \coordinate (p2p) at ($(5,0)+(-0.70,0)$);
  \coordinate (p2m) at ($(5,0)+( 0.70,0)$);

  \draw[thick,->,red]
  (p2p) to[bend left=35]
  (p1m);

  \node[red] at (2.5,-0.9) {$-1$};

  \draw[thick,->,blue]
  (p1m) to[bend left=45]
  (p2m);

  \node[blue] at (2.5,0.75) {$+1$};

\end{tikzpicture}}
\caption{Two couplings between subsystems \(1\) and \(2\) inducing different signatures.}
\label{fig:pairwise_consistency_counterexample_two_vertices}
\end{figure}
In particular, constraint~\eqref{eq:self_coupling_constraint} implies that
self-couplings always have signature $+1$. Under Assumption~\ref{assump:pairwise_consistency}, one can represent a port-network with undirected edges like in Figure~\ref{fig:port_network_considered}.
\begin{figure}[htb]
\centering
\scalebox{0.7}{
\begin{tikzpicture}[>=Stealth]

  \coordinate (v1) at (0,0);
  \coordinate (v2) at (4,0);
  \coordinate (v3) at (4,3);
  \coordinate (v4) at (0,3);

  \coordinate (v1L) at ($(v1)+(-0.70,0)$);
  \coordinate (v1R) at ($(v1)+( 0.70,0)$);

  \coordinate (v2L) at ($(v2)+(-0.70,0)$);
  \coordinate (v2R) at ($(v2)+( 0.70,0)$);

  \coordinate (v3L) at ($(v3)+(-0.70,0)$);
  \coordinate (v3R) at ($(v3)+( 0.70,0)$);

  \coordinate (v4L) at ($(v4)+(-0.70,0)$);
  \coordinate (v4R) at ($(v4)+( 0.70,0)$);

\draw[thick]
  ($(v1R)+(-0.1,0)$) -- ($(v2L)+(0.1,0)$);

\draw[thick]
  ($(v2R)+(-0.1,0)$) -- ($(v3L)+(0.1,0)$);

\draw[thick]
  ($(v3R)+(-0.1,0)$) -- ($(v4L)+(0.1,0)$);

\draw[thick]
  ($(v4R)+(-0.1,0)$) -- ($(v1L)+(0.1,0)$);

  \PortVertex{0}{0}{1}{1}
  \PortVertex{4}{0}{2}{1}
  \PortVertex{4}{3}{3}{1}
  \PortVertex{0}{3}{4}{1}

  \draw[->,very thick] ($(v1L)+(-0.5,0)$) -- (v1L);
  \node[left] at ($(v1L)+(-0.5,0)$) {$U_1(t)$};
    \draw[->,very thick] ($(v3L)+(0.7,1)$) -- ($(v3L)+(0.7,0.5)$);
  \node[above] at ($(v3L)+(0.7,1)$) {$U_2(t)$};
\end{tikzpicture}}
\caption{Cycle port-network with one boundary input and one in-domain input.}
\label{fig:port_network_considered}
\end{figure}
\begin{rem} \label{rem:signature1}
    A port-network where all signatures are $+1$ is already in the same form as~\eqref{eq:hyperbolic_couple}.
\end{rem}
Under Assumption~\ref{assump:pairwise_consistency}, we can associate a signed graph
with every port-network.
\begin{defn}[Signed graph of a port-network] \label{def:sign_graph}
The signed graph associated with a port-network of the
form~\eqref{eq:system_i_graph}-\eqref{eq:boundary_couplings_graph} is the
triple $\graph=(\mathcal{V},\mathcal{E},\sigma)$ where:
\begin{itemize}
  \item $\mathcal{V} := \{1,\ldots,n_g\}$ is the set of vertices, each representing
        a two-port hyperbolic subsystem of the form~\eqref{eq:system_i_graph}.
  \item $\mathcal{E}$ is the set of edges, where an edge $(i,j)$ is present
        whenever at least one nonzero coupling exists between subsystems $i$ and $j$.
        The graph is simple: Assumption~\ref{assump:pairwise_consistency} ensures
        that at most one edge connects any two vertices.
  \item $\sigma:\mathcal{E}\to\{-1,+1\}$ is the signature function, well defined
        by Assumption~\ref{assump:pairwise_consistency}.
\end{itemize}
See Figure~\ref{fig:associated_sign_graph} for an example.
\end{defn}
\begin{figure}[htb]
\centering
\scalebox{0.7}{
\begin{tikzpicture}[>=Stealth,
  vertex/.style={circle,draw,thick,minimum size=7mm,inner sep=0pt}]

  \node[vertex] (v1) at (0,0) {1};
  \node[vertex] (v2) at (4,0) {2};
  \node[vertex] (v3) at (3.2,3) {3};
  \node[vertex] (v4) at (0.8,3) {4};

  \draw[thick] (v1) -- node[below] {$-1$} (v2);
  \draw[thick] (v2) -- node[right] {$-1$} (v3);
  \draw[thick] (v3) -- node[above] {$-1$} (v4);
  \draw[thick] (v4) -- node[left] {$-1$} (v1);


\end{tikzpicture}}
\caption{Signed graph associated with the cycle network represented in Figure~\ref{fig:port_network_considered}.}
\label{fig:associated_sign_graph}
\end{figure}
The following theorem is the main result of the section.
\begin{thm}\label{thm:graph_main}
Under Assumption~\ref{assump:pairwise_consistency}, every port
network~\eqref{eq:system_i_graph}-\eqref{eq:boundary_couplings_graph} can be
recast as a coupled hyperbolic system of the form~\eqref{eq:hyperbolic_couple}.
More precisely, letting $\mathcal{X}_g := \prod_{i=1}^{n_g}(L^2([0,1],\mathbb{R}))^{n_i+m_i}$
denote the state space of the port network, there exists a bounded linear
isomorphism
\[
\mathcal{F}:\mathcal{X}_g\to\mathcal{X}
\]
which transforms~\eqref{eq:system_i_graph}-\eqref{eq:boundary_couplings_graph}
into a system like~\eqref{eq:hyperbolic_couple}.
\end{thm}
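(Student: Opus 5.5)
The plan is to build $\mathcal F$ as a composition of two elementary, boundedly invertible operations applied subsystem by subsystem. The first is a \emph{reflection} $x\mapsto 1-x$, which exchanges the roles of the ports $p_i^{+1}$ and $p_i^{-1}$. The second is a \emph{cut-and-fold} at $x=1/2$, which places both original ports of a subsystem on the same boundary. The final step is to stack all transformed states into a single vector and check that the interconnection matrix has $K_{+-}=0$ and $K_{-+}=0$, which by Remark~\ref{rem:signature1} is exactly the form~\eqref{eq:hyperbolic_couple}.

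\textbf{Step 1 (switching on the signed graph).} Using Assumption~\ref{assump:pairwise_consistency} and Definition~\ref{def:sign_graph}, choose a spanning forest of $\graph$. Assign orientations $\varepsilon_i\in\{-1,+1\}$ along the forest so that $\varepsilon_i\varepsilon_j\sigma(i,j)=+1$ on every forest edge. For every $i$ with $\varepsilon_i=-1$, replace $w_i(t,x)$ by $\tilde w_i(t,x):=J_i w_i(t,1-x)$, where $J_i$ swaps the blocks so that the new rightward components are the old $w_i^-$. The new velocity matrix is $\diag(\Lambda_i^-,-\Lambda_i^+)$, and $\Sigma_i$, $h_i$ become $J_i\Sigma_i(1-\cdot)J_i^\star$ and $J_ih_i(1-\cdot)$, which are still continuous. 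Incoming and outgoing variables at the ports are simply relabelled $p_i^{\pm1}\leftrightarrow p_i^{\mp1}$, and every coupling touching vertex $i$ has its signature multiplied by $\varepsilon_i$. After this step, every edge has signature $\varepsilon_i\varepsilon_j\sigma(i,j)$. This equals $+1$ except on edges closing a cycle of negative signature, i.e.\ exactly when $\graph$ is unbalanced in the sense of~\cite{harary1953notion_graph}. Self-couplings already have signature $+1$ by~\eqref{eq:self_coupling_constraint}.

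\textbf{Step 2 (cutting frustrated edges).} For each remaining edge of signature $-1$, pick one endpoint $i$ and fold it. Set
\[
a_i(t,z):=w_i\bigl(t,\tfrac{1-z}{2}\bigr),\qquad b_i(t,z):=w_i\bigl(t,\tfrac{1+z}{2}\bigr),\qquad z\in[0,1].
\]
Then $(a_i,b_i)$ satisfies a hyperbolic system with velocities $\diag(-2\Lambda_i,2\Lambda_i)$, sorted into rightward and leftward blocks. Both original ports now sit at $z=1$. The continuity conditions $a_i(t,0)=b_i(t,0)$ become a self-coupling at the single port $z=0$, which has signature $+1$ and is not of the forbidden type~\eqref{eq:self_coupling_constraint}. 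Every network coupling of $i$ now lives at the port $z=1$. By choosing the orientation of the folded vertex (reflecting it if needed), all its edges can be given signature $+1$. Alternatively, one may simply fold every vertex, which removes all frustration at once and shows that the pairwise consistency assumption is only needed to make the edge signs well defined.

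\textbf{Step 3 (assembly and boundedness).} Collect all rightward components into $w^+$ and all leftward ones into $w^-$. Read off $Q$ from $K_{++}$ together with the continuity conditions, $R$ from $K_{--}$, and obtain $B_0$ and $B_1$ from $\Bgraph$ by the same permutation. The map $\mathcal F$ is a product of permutations, reflections and the restriction-rescaling $w\mapsto(a,b)$. The latter satisfies $\|a\|_{L^2}^2+\|b\|_{L^2}^2=2\|w\|_{L^2}^2$ and has an explicit inverse obtained by gluing, so $\mathcal F$ is a bounded isomorphism $\mathcal X_g\to\mathcal X$. It maps $H^1$ states satisfying the boundary conditions of~\eqref{eq:boundary_couplings_graph} onto states in $D(\mathcal A)$, so mild solutions correspond.

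\textbf{Main obstacle.} I expect the main difficulty to be the bookkeeping showing that the transformed interconnection matrix is genuinely block-diagonal by port type. This requires tracking, for each nonzero $K_{(i,\ell),(j,k)}$, on which side the ports $p_i^\ell$ and $p_j^k$ land after reflections and folds, which is precisely where consistency of the signs $\varepsilon_i\varepsilon_j\sigma(i,j)$ is used. A secondary issue is that folding doubles speeds, so two characteristics of different subsystems may end up with equal velocities. This isotachic situation is not excluded by the construction and has to be handled as in~\cite{hu2016control}.
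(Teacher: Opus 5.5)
\textbf{There is a genuine gap in your Step~2; the fold-every-vertex alternative you mention in passing is the correct proof.}

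Once you fold a vertex $i$ at $x=1/2$, it has a single external port, with some label $c\in\{-1,+1\}$ fixed by the orientation you choose. Each coupling between $i$ and a neighbour $j$ then has signature $c\,k_j$, where $k_j$ is the (switched) label of the port of $j$ involved. Reorienting $i$ makes all these edges positive only if all the $k_j$ coincide.

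This fails whenever $i$ carries unfrustrated couplings at both of its original ports. For instance, suppose $i$ has a pendant neighbour attached at each of its two ports. After your switching, the two pendants use ports of opposite labels, so one of the two pendant edges is negative whatever orientation you give the folded $i$. Now take a negative triangle ($p_1^{-1}\leftrightarrow p_2^{+1}$, $p_2^{-1}\leftrightarrow p_3^{+1}$, $p_3^{-1}\leftrightarrow p_1^{+1}$) and attach such a pair of pendants to every vertex. Pendant edges are bridges, so they lie in every spanning forest and are never frustrated. Hence Step~2 fails for every choice of forest and of endpoint.

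Folding a single vertex can even destroy Assumption~\ref{assump:pairwise_consistency}. If $K_{(i,+1),(j,+1)}\neq0$ and $K_{(i,-1),(j,-1)}\neq0$ (both of signature $+1$), then after folding $i$ alone these two couplings have opposite signatures, and the signed graph is no longer defined. You give no argument that iterating the procedure ends with a balanced graph. Step~3, which reads $Q$ off $K_{++}$ and the continuity conditions and $R$ off $K_{--}$, presupposes exactly that.

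\textbf{The fix.} Fold \emph{every} subsystem at $x=1/2$ and make that the proof. Then all original ports sit at $z=1$, so every coupling in $K$ (all four blocks, self-couplings included) has signature $(-1)(-1)=+1$. Step~1 becomes superfluous, and Step~3 must be redone as follows:
\begin{itemize}
\item $R$ is the whole of $K$, up to the permutation sending $(r^{+1},r^{-1})$ and $(y^{+1},y^{-1})$ to the leftward, resp.\ rightward, traces at $z=1$;
\item $B_1$ is the correspondingly permuted $B_g$;
\item $Q$ is the permutation matrix encoding $a_i(t,0)=b_i(t,0)$;
\item $B_0=0$.
\end{itemize}
As you observe, Assumption~\ref{assump:pairwise_consistency} is then not used at all.

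This is essentially the paper's route. Lemma~\ref{lem:cut_odd} cuts every subsystem at $x=1/2$, which gives a balanced graph with marking $+1$ on left halves and $-1$ on right halves. The good folding of Lemma~\ref{lem:good_folding} then reflects every right half (up to a sign per component), which is your midpoint fold up to the mirror $z\mapsto 1-z$. Your Step~1 switching is the paper's Algorithm~\ref{algo:folding}. It is only useful when the original graph is already balanced, as in the paper's four-cycle example, where it avoids cutting and hence avoids doubling the speeds.
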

The proof of Theorem~\ref{thm:graph_main} is given in Appendix~\ref{section:proof_thm_graph_main} and we now introduce the main tools for this proof.
\subsection{Folding Transform}
\label{section:folding}
The folding transforms are formally introduced in the proof of Theorem~\ref{thm:graph_main} in Appendix~\ref{section:proof_thm_graph_main}, for a visual explanation we refer to Figure~\ref{fig:folding_transformation}. For more explanations about folding transforms we refer to~\cite{ folding_vazquez}. 
This section is devoted to the choice of which network subsystems should be folded.
\begin{defn}[Folding choice] \label{def:folding_choice}
A folding choice is a marking (see~\cite{algo_balance_graph}) of the graph $\mathcal{G}$, i.e., a vector
\[
(m(1),\dots,m(n_g)) \in \{-1,+1\}^{n_g}.
\]
The value $m(i)=+1$ means that subsystem $i$ is not folded, whereas $m(i)=-1$
means that subsystem $i$ is folded.
\end{defn}

If subsystem $i$ is folded, define
\[
\widetilde{w}_i^+(t,x) := w_i^-(t,1-x), \qquad
\widetilde{w}_i^-(t,x) := w_i^+(t,1-x).
\]
Folding swaps the two ports of subsystem $i$.
If it is not folded, set $\widetilde{w}_i^\pm := w_i^\pm$.
After the folding transformation associated with marking $m$, the new port
label of $p_i^\ell$ becomes $\ell \times m(i)$, see Figure~\ref{fig:folding_transformation}.

\begin{defn}[Good folding]\label{def:good_folding}
A folding choice $m$ is called \emph{good} if every nonzero boundary coupling
connects two ports with the same final port label, i.e., for all
$(i,j)\in\{1,\ldots,n_g\}^2$ and all $(\ell,k)\in\{-1,+1\}^2$,
\[
K_{(i,\ell),(j,k)}\neq 0
\quad\Longrightarrow\quad
\ell\times m(i) = k\times m(j),
\]
or equivalently, $m(i)\times m(j) = \ell\times k$.
Under Assumption~\ref{assump:pairwise_consistency}, this condition reduces to
\[
m(i)\times m(j) = \sigma(i,j)
\]
for every edge $(i,j)\in\mathcal{E}$.
\end{defn}
{The purpose of the folding procedure is to ensure that all coupling signatures in the transformed network are equal to $+1$; see Remark~\ref{rem:signature1}.}
\begin{defn}[Balanced signed graph]\label{def:balanced_graph}
A signed graph $\mathcal{G}=(\mathcal{V},\mathcal{E},\sigma)$ is
\emph{balanced} if every cycle has positive signature.
That is, for every cycle
\[
i_0 | i_1 | \cdots | i_q = i_0,
\]
one has
\[
\sigma(i_0,i_1)\times\sigma(i_1,i_2)\times\cdots\times\sigma(i_{q-1},i_q) = +1.
\]
Equivalently, every cycle contains an even number of edges of signature $-1$.
\end{defn}
For example, the signed graph represented in Figure~\ref{fig:associated_sign_graph} is balanced.
\begin{lem}[Existence of a good folding choice]\label{lem:good_folding}
Assume the graph $\graph$ is balanced. Then, under Assumption~\ref{assump:pairwise_consistency}, there exist exactly $2^c$ (where $c$ is the number of connected components of the signed graph $\graph$) good folding
choices $m\in\{-1,+1\}^{n_g}$ in the sense of Definition~\ref{def:good_folding}.
\end{lem}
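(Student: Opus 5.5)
The argument is the classical one for balanced signed graphs (Harary): on each connected component, a good folding choice is determined by its value at one vertex, and balance is exactly the condition that propagating this value along edges gives no contradiction. The count $2^c$ then comes from choosing a sign independently on each component.

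\textbf{Step 1: reduction to components.} By Definition~\ref{def:good_folding} and Assumption~\ref{assump:pairwise_consistency}, $m$ is good if and only if $m(i)m(j)=\sigma(i,j)$ for every edge $(i,j)\in\mathcal{E}$. Self-couplings satisfy this automatically, since by~\eqref{eq:self_coupling_constraint} they have signature $+1$ and $m(i)^2=1$. Because every constraint involves two vertices of the same connected component, the set of good choices is the product over the components $\graph_1,\dots,\graph_c$ of the sets of good choices on each $\graph_r$. It therefore suffices to show that each connected balanced signed graph admits exactly two good markings.

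\textbf{Step 2: existence on a component.} Fix a connected component and a root vertex $i_0$ in it, and set $m(i_0):=+1$. For any other vertex $j$ in the component, pick a path $i_0|i_1|\cdots|i_q=j$ and define
\[
m(j):=\prod_{k=0}^{q-1}\sigma(i_k,i_{k+1}),
\]
the signature of the path. This is the main point to check: the definition must not depend on the chosen path. Take two paths $P$ and $P'$ from $i_0$ to $j$. The closed walk $P$ followed by $P'$ reversed has signature $\sigma(P)\sigma(P')$, since $\sigma(P')^{-1}=\sigma(P')$. Balance is stated for cycles, not closed walks, so I would decompose the closed walk into cycles plus edges traversed twice, by induction on its length: a closed walk with a repeated vertex splits into two shorter closed walks. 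Edges traversed twice contribute $\sigma^2=+1$, and each cycle contributes $+1$ by balance, so $\sigma(P)=\sigma(P')$. A cleaner route avoids walks altogether: take a spanning tree of the component and define $m$ along tree paths, which are unique. Then for a non-tree edge $(i,j)$, the tree path from $i$ to $j$ together with that edge forms a cycle, and balance gives exactly $m(i)m(j)=\sigma(i,j)$. For tree edges the relation holds by construction.

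\textbf{Step 3: uniqueness up to global sign.} Let $m$ and $m'$ be two good markings on a connected component. Then $m(i)m'(i)=m(j)m'(j)$ across every edge, because both products equal $\sigma(i,j)^2=1$. By connectedness, $m'=\pm m$ on the whole component. Both signs are good, since the constraint is invariant under flipping all values on a component, and $m\neq -m$. So each component has exactly two good markings, and the product over the $c$ components gives $2^c$.
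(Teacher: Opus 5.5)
Your proof is correct, and it is a self-contained version of what the paper only cites. The paper's one-line proof observes that $m$ is good exactly when $\sigma(i,j)=m(i)m(j)$ on every edge, i.e.\ when $\Gamma$ is the signed graph induced by the marking $m$. It then invokes the classical characterization of balanced signed graphs as those induced by some marking (\cite[Theorem~1]{algo_balance_graph}). Algorithm~\ref{algo:folding} is the same path-propagation construction as your Step~2. That buys brevity and ties the construction to a standard algorithm from the signed-graph literature.

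You reprove the characterization instead. The spanning-tree version makes well-definedness automatic and uses balance only on fundamental cycles. Your closed-walk decomposition is what justifies the paper's unproved remark that Algorithm~\ref{algo:folding} gives a path-independent value, since balance is assumed only on cycles.

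More substantively, your Step~3 is what actually yields the count $2^c$. Two good markings on a connected component differ by a global sign, because $m m'$ is constant across edges. The paper's citation, as written, delivers existence and leaves the exact count implicit.

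Two small precision points. In Step~3, it is the product $m(i)m(j)m'(i)m'(j)$ that equals $\sigma(i,j)^2=1$, which gives $m(i)m'(i)=m(j)m'(j)$. In the spanning-tree argument, say why the tree path from $i$ to $j$ has signature $m(i)m(j)$: concatenate the root paths to $i$ and $j$, and note that their common initial segment is traversed twice and contributes $+1$.
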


\begin{algorithm}[H]\caption{Construction of a Good Folding Choice}
\label{algo:folding}
A good folding vector $m$ can be constructed as follows.
For each connected component of $\mathcal{G}$:
\begin{enumerate}
  \item Choose a reference node $i_0$ and set $m(i_0)=+1$.
  \item For any other node $i$, choose a path
        $i_0 | i_1 | \cdots | i_q = i$ and define
        \[
        m(i) := \sigma(i_0,i_1)\times\sigma(i_1,i_2)\times\cdots\times\sigma(i_{q-1},i_q).
        \]
\end{enumerate}
\end{algorithm}
Hence $m(i)$ encodes the parity of the number of edges of signature $-1$ along
any path from the reference node $i_0$ to $i$; the balancedness of $\mathcal{G}$
guarantees that this value is independent of the chosen path.

\begin{proof}
The result is a direct consequence of~\cite[Theorem~1]{algo_balance_graph},
upon observing that a good folding choice $m$ exists if and only if
$\mathcal{G}$ is the signed graph induced by the vertex marking $m$, i.e.,
the signature of each edge equals the product of the markers of its endpoints.

\end{proof}
For example, the only two good folding choices for the signed graph represented in Figure~\ref{fig:associated_sign_graph} are
$(m(1), m(2),m(3), m(4)) = (+1, -1, +1, -1),$

$(m(1), m(2),m(3), m(4)) = (-1, +1, -1, +1).$
\begin{figure}[H]
    \centering
    \scalebox{0.6}{%
    \begin{tikzpicture}[>=stealth]

\node[font=\large] at (4,1.7) {Before folding};

\draw[->,red,very thick] (0,0) -- (3,0);
\node[red,above] at (1.5,0) {$w_i^+(t,x)$};

\draw[<-,blue,very thick] (0,-1.5) -- (3,-1.5);
\node[blue,below] at (1.5,-1.5) {$w_i^-(t,x)$};

\draw[blue,thick]
    (-0.5,-0.75) arc (-180:-135:1.1);
\draw[blue,->,thick]
    (-0.5,-0.75) arc (-180:-225:1.1);
\node[blue,right] at (-0.5,-0.75) {$Q_{ii}$};

\draw[red,thick]
    (3.5,-0.75) arc (0:45:1.1);
\draw[red,->,thick]
    (3.5,-0.75) arc (0:-45:1.1);
\node[red,left] at (3.5,-0.75) {$R_{ii}$};

\draw[->,red,very thick] (5,0) -- (8,0);
\node[red,above] at (6.5,0) {$w_j^+(t,x)$};

\draw[<-,blue,very thick] (5,-1.5) -- (8,-1.5);
\node[blue,below] at (6.5,-1.5) {$w_j^-(t,x)$};

\draw[blue,thick]
    (4.5,-0.75) arc (-180:-135:1.1);
\draw[blue,->,thick]
    (4.5,-0.75) arc (-180:-225:1.1);
\node[blue,right] at (4.5,-0.75) {$Q_{jj}$};

\draw[red,thick]
    (8.5,-0.75) arc (0:45:1.1);
\draw[red,->,thick]
    (8.5,-0.75) arc (0:-45:1.1);
\node[red,left] at (8.5,-0.75) {$R_{jj}$};

\draw[->,green!50!black!90,very thick]
    (3.5,0) -- (4.5,0);
\node[green!50!black!90,above] at (4,0) {$Q_{ji}$};

\draw[<-,green!50!black!90,very thick]
    (3.5,-1.5) -- (4.5,-1.5);
\node[green!50!black!90,below] at (4,-1.5) {$R_{ij}$};

\draw[<->,very thick] (-0.5,-3) -- (3.5,-3);
\node[above] at (0,-3) {$0$};
\node[above] at (3,-3) {$1$};

\draw[<->,very thick] (4.5,-3) -- (8.5,-3);
\node[above] at (5,-3) {$0$};
\node[above] at (8,-3) {$1$};

\draw[->,very thick] (4,-3.5) -- (4,-4.7);
\node[right] at (4,-4.1) {Fold};

\node[font=\large] at (4,-5.5) {After folding};

\draw[->,blue,very thick] (1,-6.5) -- (7,-6.5);
\node[blue,above] at (4,-6.5) {$w_i^-(t,x)$};

\draw[->,red,very thick] (1,-7.7) -- (7,-7.7);
\node[red,above] at (4,-7.7) {$w_j^+(t,x)$};

\draw[<-,red,very thick] (1,-9.3) -- (7,-9.3);
\node[red,below] at (4,-9.3) {$w_i^+(t,x)$};

\draw[<-,blue,very thick] (1,-10.5) -- (7,-10.5);
\node[blue,below] at (4,-10.5) {$w_j^-(t,x)$};


\draw[red,->,thick]
    (0.95,-9.3)
    .. controls (-0.6,-8.5) and (-0.6,-7.3) ..
    (0.95,-6.5);
\node[red,right] at (0.15,-7.4) {$R_{ii}$};

\draw[blue,->,thick]
    (7.05,-6.5)
    .. controls (8.6,-7.3) and (8.6,-8.5) ..
    (7.05,-9.3);
\node[blue,right] at (7.55,-7.9) {$Q_{ii}$};

\draw[blue,->,thick]
    (0.95,-10.5)
    .. controls (-0.9,-9.7) and (-0.9,-8.5) ..
    (0.95,-7.7);
\node[blue,left] at (0.55,-9.6) {$Q_{jj}$};

\draw[red,->,thick]
    (7.05,-7.7)
    .. controls (8.2,-8.4) and (8.2,-9.8) ..
    (7.05,-10.5);
\node[red,right] at (7.95,-9.1) {$R_{jj}$};

\draw[green!50!black!90,->,very thick]
    (0.95,-9.3)
    .. controls (-0.15,-8.8) and (-0.15,-8.2) ..
    (0.95,-7.7);
\node[green!50!black!90,right] at (0.15,-8.45) {$Q_{ji}$};

\draw[green!50!black!90,->,very thick]
    (0.95,-10.5)
    .. controls (-1.6,-9.4) and (-1.6,-7.6) ..
    (0.95,-6.5);
\node[green!50!black!90,right] at (-1.45,-8.5) {$R_{ij}$};

\draw[<->,very thick] (0.5,-11.8) -- (7.5,-11.8);
\node[above] at (1,-11.8) {$0$};
\node[above] at (7,-11.8) {$1$};

    \end{tikzpicture}%
    }

    \caption{Folding transformation of subsystem \(i\) onto subsystem \(j\).}
    \label{fig:folding_transformation}
\end{figure}

\subsection{Cutting Transform}
The folding methodology presented in Section~\ref{section:folding} applies only if the associated signed-graph is balanced. As explained in Lemma~\ref{lem:cut_odd}, using cutting transformations, it is always possible to obtain a port-network associated with a balanced signed graph.

\begin{lem}[Cutting transform]\label{lem:cut_odd}
Any port-network of the form~\eqref{eq:system_i_graph} satisfying
Assumption~\ref{assump:pairwise_consistency} can be transformed, by a finite
sequence of cutting operations, into a network whose associated signed graph
$\widetilde{\mathcal{G}}$ is balanced.
Each cutting operation is an {isometric isomorphism.} 
\end{lem}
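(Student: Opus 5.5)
We need a cutting operation on a subsystem that changes the signature parity of cycles. The idea is to cut a subsystem $i$ at $x=1/2$: define two new subsystems $w_{i,a}(t,x)=w_i(t,x/2)$ and $w_{i,b}(t,x)=w_i(t,(1+x)/2)$ on $[0,1]$, with speeds $2\Lambda_i$ and coupling matrices $\Sigma_i(x/2)$, $\Sigma_i((1+x)/2)$, and distributed inputs $h_i(x/2)$, $h_i((1+x)/2)$. Continuity of $w_i$ at $x=1/2$ becomes a new boundary coupling between port $p_{i,a}^{-1}$ and port $p_{i,b}^{+1}$: $w_{i,b}^+(t,0)=w_{i,a}^+(t,1)$ and $w_{i,a}^-(t,1)=w_{i,b}^-(t,0)$. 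This is an edge of signature $-1$. The original couplings at $p_i^{+1}$ are inherited by $p_{i,a}^{+1}$ and those at $p_i^{-1}$ by $p_{i,b}^{-1}$, with the same signatures. The map $w_i\mapsto(w_{i,a},w_{i,b})$ is, up to the normalizing factor $\sqrt{1/2}$ (which I include to make it isometric), an isometric isomorphism of $L^2$. Self-coupling constraint~\eqref{eq:self_coupling_constraint} must be checked: the new subsystems have no cross-port self-couplings (the new coupling is between distinct vertices). Pairwise consistency (Assumption~\ref{assump:pairwise_consistency}) also persists, except possibly when $i$ had a self-coupling; since self-couplings have signature $+1$ and stay on a single new vertex, this is fine. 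If $i$ had a neighbor $j$ coupled to both ports of $i$, consistency held before, and after cutting those couplings go to different vertices $a,b$, so it still holds.

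The key point is the effect on cycles. A cycle through $i$ in $\Gamma$ enters and leaves $i$ through ports. If it uses both ports of $i$ (one edge at $p_i^{+1}$, one at $p_i^{-1}$), the lifted cycle in $\widetilde\Gamma$ passes through $a$, the new edge, and $b$, so its signature is multiplied by $-1$. This is not quite automatic, because the signature of an edge depends only on port types, which are preserved. The only change is the inserted $-1$ edge. If the cycle uses the same port twice, it lifts to a cycle on $a$ (or $b$) alone with unchanged signature. New cycles in $\widetilde\Gamma$ all arise this way, because $a$ and $b$ are joined only by the new edge.

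A difficulty I anticipate: with pairwise consistency, a vertex pair carries one signature, but the graph edge hides which ports are used. I would therefore work with the finer port-level multigraph, or simply choose cut points. A cleaner alternative is to cut every subsystem that is an endpoint of some $-1$ edge appropriately. Simplest of all: cut each subsystem once. Then every original edge of signature $-1$ joins ports of opposite type. I would try to show directly that a good folding exists for the cut network, via the vertex marking $m(i,a)=+1$, $m(i,b)=-1$ for every $i$. Check: the new edge $(i,a)$–$(i,b)$ has signature $-1=m(i,a)m(i,b)$. An original coupling between $p_i^{\ell}$ and $p_j^{k}$ becomes an edge between vertex $(i,\ell)$ (with $a$ for $\ell=+1$ and $b$ for $\ell=-1$) and $(j,k)$, with signature $\ell k$. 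The marking product is $m=\ell\cdot k$ since $m(i,a)=+1$ corresponds to $\ell=+1$. Hence every edge satisfies $\sigma=m\,m$, so the cut graph is induced by a marking and is therefore balanced (by~\cite[Theorem~1]{algo_balance_graph}, or directly, since a cycle signature is a telescoping product of squares). This uses a single cutting per subsystem, which is a finite sequence, and it avoids the cycle-tracing argument entirely. The main obstacle is the careful bookkeeping showing that pairwise consistency and~\eqref{eq:self_coupling_constraint} survive and that self-couplings at $p_i^{\pm1}$ map to self-couplings of the correct vertex. Self-couplings have $\ell=k$, so their signature is $+1=m^2$.
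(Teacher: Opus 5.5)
Your proof is correct and uses the paper's construction: every subsystem is cut once at $x=1/2$ by the $1/\sqrt{2}$-normalized map, and midpoint continuity becomes an identity coupling between $p_{i,a}^{-1}$ and $p_{i,b}^{+1}$ of signature $-1$. The two routes differ in how balancedness is proved.

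The paper fixes a cycle $C$ of $\Gamma$ and records the port labels $\ell_r,k_r$ used at each vertex. It writes $\sigma(C)=\prod_r\ell_r k_r=(-1)^{r_C}$ and observes that the lifted cycle crosses exactly $r_C$ new internal edges.

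You instead exhibit the marking $m(i,a)=+1$, $m(i,b)=-1$. You then check, block by block, that $K_{(v,\ell),(w,k)}\neq 0$ in the cut network forces $\ell k=m(v)m(w)$, because the only external port of $(i,a)$ is $+1$ and that of $(i,b)$ is $-1$. Every cycle signature then telescopes to a product of squares.

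Your version is more robust in two ways, both of which the paper's cycle-tracing leaves implicit:
\begin{itemize}
\item It covers all cycles of $\widetilde{\mathcal G}$ at once, including those that are not lifts of cycles of $\Gamma$. For example, if $i$ and $j$ are coupled through both $p_i^{+1}\leftrightarrow p_j^{+1}$ and $p_i^{-1}\leftrightarrow p_j^{-1}$, a single edge of $\Gamma$ becomes a $4$-cycle of $\widetilde{\mathcal G}$.
\item It avoids the ambiguity of $\ell_r,k_r$ when an edge of $\Gamma$ is realized through several port pairs.
\end{itemize}

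It also yields more. Pairwise consistency of the cut network is automatic, so you never need Assumption~\ref{assump:pairwise_consistency} for the original network. Moreover, your marking is directly a good folding choice in the sense of Definition~\ref{def:good_folding}: fold every $b$-half, i.e.\ fold each subsystem at its midpoint. This makes Lemma~\ref{lem:good_folding} unnecessary afterwards.

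What the paper's argument offers in exchange is a transparent picture of the cut correcting exactly the parity defect of each cycle.

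The bookkeeping you flagged is routine. Just note that with the normalization the distributed inputs become $h_i(x/2)/\sqrt{2}$ and $h_i((1+x)/2)/\sqrt{2}$, and $B_g$ is scaled by $1/\sqrt{2}$; none of this changes any signature.
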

The proof of Lemma~\ref{lem:cut_odd} is postponed to Appendix~\ref{appendix:proof_lem_cut}. The formal definition of a cutting transform is also given in the same appendix. We refer to Figure~\ref{fig:cutting_transformation} for a representation of a cutting transform.
\begin{figure}[H]
\centering
\scalebox{0.6}{
\begin{tikzpicture}

\draw (3.0,1.6) node{\large Before cutting};

\draw [>=stealth,->,red,very thick] (0,0) -- (6,0);
\draw [red] (4,0) node[above] {$w_i^+(t,x)$};

\draw [>=stealth,<-,blue,very thick] (0,-1.5) -- (6,-1.5);
\draw [blue] (2,-1.5) node[below] {$w_i^-(t,x)$};

\draw [blue,>=stealth,thick](-0.5,-0.75) arc (-180:-135:1.1);
\draw [blue,>=stealth,->,thick](-0.5,-0.75) arc (-180:-225:1.1);
\draw [blue] (-0.5,-0.75) node[right] {$Q_{ii}$};

\draw [red,>=stealth,thick](6.5,-0.75) arc (0:45:1.1);
\draw [red,>=stealth,->,thick](6.5,-0.75) arc (0:-45:1.1);
\draw [red] (6.5,-0.75) node[left] {$R_{ii}$};

\draw [dashed,very thick] (3,1) -- (3,-2.3);

\draw [>=stealth,<->,very thick] (-0.5,-3) -- (6.5,-3);
\draw (0,-3) node[above] {$0$};
\draw (3,-3) node[above] {$\frac{1}{2}$};
\draw (6,-3) node[above] {$1$};

\draw [>=stealth,->,very thick] (3,-3.8) -- (3,-5.0);
\draw (3.7,-4.4) node {Cut};

\draw (3.0,-6.1) node{\large After cutting};

\draw [>=stealth,->,red,very thick] (0,-7.6) -- (3,-7.6);
\draw [red] (1.5,-7.6) node[above] {$w_{i,1}^+(t,x)$};

\draw [>=stealth,<-,blue,very thick] (0,-9.1) -- (3,-9.1);
\draw [blue] (1.5,-9.1) node[below] {$w_{i,1}^-(t,x)$};

\draw [>=stealth,->,red,very thick] (5,-7.6) -- (8,-7.6);
\draw [red] (6.5,-7.6) node[above] {$w_{i,2}^+(t,x)$};

\draw [>=stealth,<-,blue,very thick] (5,-9.1) -- (8,-9.1);
\draw [blue] (6.5,-9.1) node[below] {$w_{i,2}^-(t,x)$};

\draw [blue,>=stealth,thick](-0.5,-8.35) arc (-180:-135:1.1);
\draw [blue,>=stealth,->,thick](-0.5,-8.35) arc (-180:-225:1.1);
\draw [blue] (-0.5,-8.35) node[right] {$Q_{ii}$};

\draw [red,>=stealth,thick](8.5,-8.35) arc (0:45:1.1);
\draw [red,>=stealth,->,thick](8.5,-8.35) arc (0:-45:1.1);
\draw [red] (8.5,-8.35) node[left] {$R_{ii}$};

\draw [>=stealth,->,green!50!black!90,very thick] (3.5,-7.6) -- (4.5,-7.6);
\draw [color=green!50!black!90] (4,-7.6) node[above] {$1$};

\draw [>=stealth,<-,green!50!black!90,very thick] (3.5,-9.1) -- (4.5,-9.1);
\draw [color=green!50!black!90] (4,-9.1) node[below] {$1$};

\draw [>=stealth,<->,very thick] (-0.5,-10.6) -- (3.5,-10.6);
\draw (0,-10.6) node[above] {$0$};
\draw (3,-10.6) node[above] {$1$};

\draw [>=stealth,<->,very thick] (4.5,-10.6) -- (8.5,-10.6);
\draw (5,-10.6) node[above] {$0$};
\draw (8,-10.6) node[above] {$1$};

\end{tikzpicture}}
\caption{Cutting transformation.}
\label{fig:cutting_transformation}
\end{figure}
\section{Numerical Simulations for a Network}
\label{section:numerics}
{This section illustrates the applicability of the proposed method through numerical simulations on a network example. In this example, we consider a cycle represented by Figure~\ref{fig:port_network_considered} and the associated signed graph in Figure~\ref{fig:associated_sign_graph}. The network is therefore composed of four $1+1$ subsystems.

One control input acts on the first subsystem at $w_1^+(t,0)$, the other acts in the domain of the third subsystem. After applying a folding transform as explained in Section~\ref{section:folding}, one can obtain a system of the same form as~\eqref{eq:hyperbolic_couple} with
$n=m=4$, $d=2$, $B_0=0\in \R^{4\times 2}$, $B_1=e_1 e_1^\star$, where $(e_i)_{i=1}^4$ is the canonical basis of $\R^4$. The different couplings are chosen such that the different matrices of the folded subsystem satisfy
 \[
\Sigma^{++}=0, \quad \Sigma^{--}=0 \in \mathbb{R}^{6\times 6},
\]}
\[
\Sigma^{+-}
=
18\,\operatorname{diag}(-0.04,\,0.00,\,0.15,\,0.15),\]
\[
\Sigma^{-+}
=
18\,\operatorname{diag}(0.18,\,0.15,\,0,\,0.15),
\]
\[
\Lambda^+
=
\operatorname{diag}(0.5,\,1.40,\,2.3,\,2\pi),\]
\[
\Lambda^-
=
\operatorname{diag}(1,\,1.8,\,2.7,\,2\pi+0.4),
\]
\[
Q=
0.12\times\begin{pmatrix}
1&1&0&1\\
1&1&0&0\\
0&0&1&1\\
1&0&1&1
\end{pmatrix},
R=
0.08 \times\begin{pmatrix}
1&0&0&0\\
0&1&1&0\\
0&1&1&0\\
0&0&0&1
\end{pmatrix}.
\] 
The spatial distribution of the in-domain actuation is defined by \[
h(x)=
\begin{cases}
\sin(x)\,e_3 e_2^{\star},
& 0.4\le x\le 0.6,\\
\mathbf 0\in \R^{8 \times 2},
& \text{otherwise}.
\end{cases}
\]
Although the original graph has few vertices, the folded system~\eqref{eq:hyperbolic_couple} has dimension $n+m=8$, making the kernel and gain computations numerically demanding. These computations are performed offline and are largely parallelizable. The backstepping kernels are obtained by successive approximations based on a fixed-point argument~\cite{krstic2008boundary}, while the gains $g$ and $f$ defining $U_1$ in~\eqref{eq:U1_form} are computed by a least-squares procedure inspired by~\cite{braun_corona}. The PDE dynamics are simulated in Python using a finite-volume scheme on a uniform grid with $n_x=50$ points over $[0,1]$.
Figures~\ref{fig:cycle_dynamic} and~\ref{fig:cycle_control} show the open-loop and closed-loop dynamics, together with the corresponding control inputs. The simulations show that the open-loop network is unstable, whereas the proposed controller exponentially stabilizes both the system state and the control dynamics. The smaller control effort associated with $U_2$ reflects its auxiliary role in enforcing the stabilizability condition~\eqref{eq:max_rank_final} after reduction to the single input~$U_1$, which carries the main stabilizing action. Using $U_2$ as a stabilizing input may reduce the control effort further, but would require an extension of the interpolation/corona formulation used in the proof of Theorem~\ref{main_result} and~\cite{braun_corona} to the case $d>1$.
\begin{figure}[H]
    \centering
    \includegraphics[width=\columnwidth]{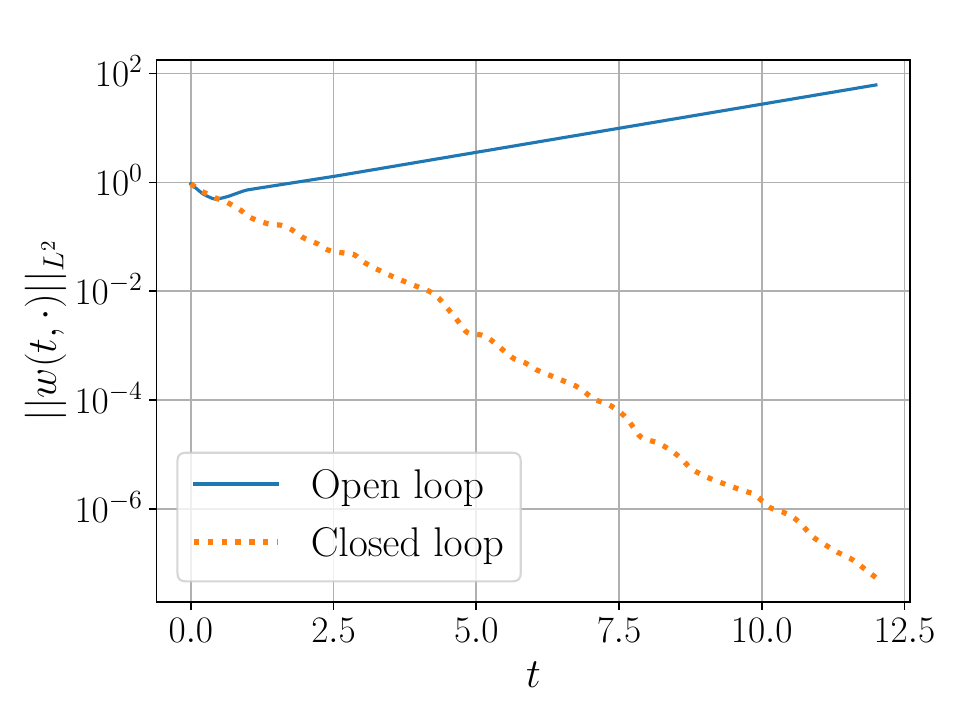}
    \caption{Dynamics of the cycle network.} 
    \label{fig:cycle_dynamic}
\end{figure}
\begin{figure}[H]
    \centering
    \includegraphics[width=\columnwidth]{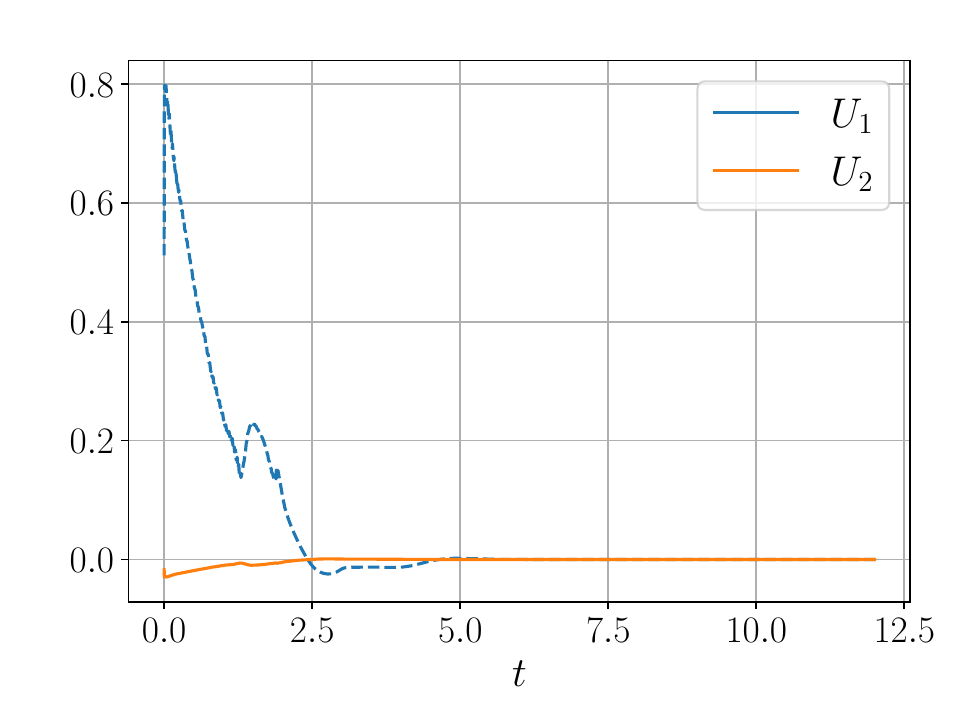}
    \caption{Control inputs} 
    \label{fig:cycle_control}
\end{figure}
\section*{Conclusion}
This paper introduced a new methodology for stabilizing a general class of first-order linear hyperbolic balance laws with partial actuation at the boundary or in the spatial domain. The framework was further applied to networks of such systems, including configurations involving cycles. Numerical simulations on a representative example illustrated the applicability of the proposed approach. Future research directions include the design of state observers and a deeper analysis of the connection between graph-theoretic properties of the network and the stabilizability condition introduced in this work. A natural conjecture is that the presence of a non-actuated cycle in the network prevents the stabilizability condition from being satisfied. A key direction for future research is the design of more efficient numerical methods enabling the treatment of larger networks and enhancing the scalability of the proposed framework.
\appendices
\makeatletter
\renewcommand{\@IEEEthmcounterin}[1]{\Alph{#1}}
\makeatother
\section{Proof of Lemma~\ref{lem:backsteppin_transform}}

\label{section:backstep_transform}
\begin{proof}
    The backstepping transform $\mathcal T$ will be the composition of two backstepping transforms previously introduced in~\cite{auriol2019explicit}.
    \paragraph{First transformation: removing in-domain couplings}
Following~\cite{coron2017finite,hu2016control}, introduce the Volterra backstepping
transformation
\begin{equation} \label{BS_transf}
\chi(t,x) = w(t,x) - \int_0^x K(x,y)\,w(t,y)\,dy, \qquad x\in[0,1],
\end{equation}
where $\chi=(\chi_+^\star,\chi_-^\star)^\star\in\mathbb{R}^{n+m}$ and
\[
K = \begin{pmatrix} K^{++} & K^{+-}\\ K^{-+} & K^{--} \end{pmatrix}
\]
is defined on $\mathcal{D}=\{(x,y)\in[0,1]^2:x\geq y\}$, with $K^{++}(x,y)\in\mathbb{R}^{n\times n}$,
$K^{+-}(x,y)\in\mathbb{R}^{n\times m}$, $K^{-+}(x,y)\in\mathbb{R}^{m\times n}$,
and $K^{--}(x,y)\in\mathbb{R}^{m\times m}$.
The kernel $K$ satisfies
\begin{align}
&\Lambda K_x(x,y) + K_y(x,y)\Lambda = -K(x,y)\Sigma, \label{eq:K_PDE}\\
&K(x,x)\Lambda - \Lambda K(x,x) = -\Sigma,\\&
\bigl(K^{--}(x,0)\Lambda^-\bigr)_{ij} = \bigl(K^{-+}(x,0)\Lambda^+Q\bigr)_{ij},~ i\geq j,
\label{eq:K_-}
\end{align}
with boundary conditions $K_{ij}^{++}(x,0)=0$ for $i\geq j$,
$K_{ij}^{++}(1,y)=0$ for $i<j$, and
$K_{ij}^{--}(1,y)=-\Sigma^{-}_{ij}/(\mu_i-\mu_j)$ for $i<j$.
This system admits a unique bounded piecewise continuously differentiable
solution~\cite{di2018stabilization,hu2016control}.
When spatially varying velocities are considered,
additional terms appear in~\eqref{eq:K_PDE} as in \cite{spatially_varying}; the well-posedness of the resulting
system is retained provided the velocities are continuously differentiable and
the entries of~$\Sigma$ are continuous.
Since~\eqref{BS_transf} is a Volterra transformation, it is boundedly
invertible~\cite{yoshida1960lectures}, with inverse
\begin{equation} \label{eq:inv_backstepping}
w(t,x) = \chi(t,x) + \int_0^x L(x,y)\,\chi(t,y)\,dy,
\end{equation}
where $L$ is piecewise continuously differentiable and satisfies the Volterra equation
\begin{equation} \label{eq:def_L}
L(x,y) = K(x,y) + \int_y^x K(x,\eta)\,L(\eta,y)\,d\eta,
\qquad (x,y)\in\mathcal{D},
\end{equation}
with block structure
\[
L(x,y) = \begin{pmatrix} L^{++}(x,y) & L^{+-}(x,y)\\ L^{-+}(x,y) & L^{--}(x,y) \end{pmatrix}.
\]
The transformation~\eqref{BS_transf} maps~\eqref{eq:hyperbolic_couple} into the
target system
\begin{equation} \label{eq:hyperbolic_target_1}
\left\{
\begin{aligned}
&\chi_t(t,x) + \Lambda\chi_x(t,x)
= \begin{pmatrix}G^{+}(x)\\G^{-}(x)\end{pmatrix}\chi_-(t,0) + h_\chi(x)\,U(t),\\[2mm]
&\chi_+(t,0) = Q\chi_-(t,0) + B_0 U(t),\\
&\chi_-(t,1) = R\chi_+(t,1) + B_1 U(t) + \int_0^1 W(\xi)\,\chi(t,\xi)\,d\xi,
\end{aligned}
\right.
\end{equation}
where, using the boundary term at $y=0$ arising from integration by parts
(with boundary condition $w^+(0)=Qw^-(0)+B_0U$),
\begin{align}
G^{+}(x) &:= K^{+-}(x,0)\Lambda^- - K^{++}(x,0)\Lambda^+Q, \label{eq:def_G_-+}\\
G^{-}(x) &:= K^{--}(x,0)\Lambda^- - K^{-+}(x,0)\Lambda^+Q, \label{eq:def_G_-}
\end{align}
$W(x)=\bigl(W^+(x)\ W^-(x)\bigr)$ with
\begin{align}
W^+(x) &:= -L^{-+}(1,x) + RL^{++}(1,x), \label{eq:def_N_+}\\
W^-(x) &:= -L^{--}(1,x) + RL^{-+}(1,x), \label{eq:def_N_-}
\end{align}
and $h_\chi(x)=\col(h_\chi^+(x),h_\chi^-(x))$ with
\begin{align*}
h_\chi^+(x) &:= h^+(x) - \int_0^x K^{++}(x,y)h^+(y)\,dy
\\
&- \int_0^x K^{+-}(x,y)h^-(y)\,dy - K^{++}(x,0)\Lambda^+B_0,
\\
h_\chi^-(x) &:= h^-(x) - \int_0^x K^{-+}(x,y)h^+(y)\,dy
\\
&- \int_0^x K^{--}(x,y)h^-(y)\,dy - K^{-+}(x,0)\Lambda^+B_0.
\end{align*}
By~\eqref{eq:K_-}, the matrix $G^{-}$ is strictly upper-triangular. The
transformation thus removes all in-domain couplings, shifting them to the
boundary (through $G^\pm$) and leaving a nonlocal residual term~$W$.

\paragraph{Second transformation: eliminating the nonlocal boundary term.}
We apply the integral transformation from~\cite{coron2017finite}, defined for
all $x\in[0,1]$ by
\begin{equation} \label{eq:backstepping_last}
\chi(t,x) = \gamma(t,x)
- \int_0^1\begin{pmatrix}0&0\\0&J(x,y)\end{pmatrix}\gamma(t,y)\,dy,
\end{equation}
where $J(x,y)\in\mathbb{R}^{m\times m}$ is a strictly upper-triangular matrix
whose entries are piecewise continuously differentiable on $[0,1]^2$,
satisfying~\cite{coron2017finite}
\begin{align}
&\Lambda^-J_x(x,y) + J_y(x,y)\Lambda^- = 0, \label{eq:F_PDE}\\
&J(x,0) = G^{-}(x)(\Lambda^-)^{-1},\quad J(0,y)=0. \label{eq:F_BC}
\end{align}
The inverse transformation reads
\begin{equation} \label{eq:backstepping_last_inv}
\gamma(t,x) = \chi(t,x)
- \int_0^1\begin{pmatrix}0&0\\0&\bar{J}(x,y)\end{pmatrix}\chi(t,y)\,dy,
\end{equation}
where $\bar{J}\in\mathbb{R}^{m\times m}$ is strictly upper-triangular with
piecewise continuously differentiable entries.
The transformation~\eqref{eq:backstepping_last} maps~\eqref{eq:hyperbolic_target_1}
into~\eqref{eq:hyperbolic_target_2}, where $H$ is the unique solution of the
Volterra equation
\begin{equation} \label{eq:def_G_bar}
H(x) = J(x,1)\Lambda^- + \int_0^1 J(x,y)\,H(y)\,dy.
\end{equation}
Moreover, $h_\gamma:=\col(h_\alpha,h_\beta)$ with
\begin{equation}
h_\alpha(x) := h_\chi^+(x), \label{eq:halpha}
\end{equation}
and $h_\beta$ is the unique solution of the Volterra equation
\begin{equation}
h_\beta(x) = h_\chi^-(x) + \int_0^1 J(x,y)\,h_\beta(y)\,dy, \label{eq:hbeta}
\end{equation}
and 
\begin{align}
F_\alpha(y) &= W^+(y), \label{eq:Falpha}\\
F_\beta(y) &= J(1,y) + W^-(y) - \int_0^1 W^-(\nu)\,J(\nu,y)\,d\nu. \label{eq:Fbeta}
\end{align}
Composing~\eqref{eq:backstepping_last_inv} and~\eqref{BS_transf}, the state
$\gamma$ is expressed in terms of $w$ by
\begin{equation} \label{eq:total_transformation}
\gamma(t,x) = (\mathcal{T}w(t))(x) := w(t,x) - \int_0^1 S(x,y)\,w(t,y)\,dy,
\end{equation}
where $S$ is a piecewise continuously differentiable kernel defined by
\begin{equation} \begin{aligned} \label{eq_def_H}
S(x,y) &= K(x,y)\,\mathbf{1}_{[0,x]}(y)
+ J_1(x,y)
\\
&- \int_0^1 J_1(x,\nu)\,K(\nu,y)\,\mathbf{1}_{[0,\nu]}(y)\,d\nu,
\end{aligned}
\end{equation}
with $J_1(x,y)=\begin{pmatrix}0&0\\0&\bar{J}(x,y)\end{pmatrix}$.
\end{proof}
\section{IDE related results}
\subsection{Proof of Lemma~\ref{lem_derivation_IDE_closed_loop}} \label{Appendix_IDE}
\begin{proof}
        That $U$ satisfies equations~\eqref{eq:Ui_rank_reduction}-\eqref{eq:U1_form} follows directly from the application of the method of characteristics to the pure transport system~\eqref{eq:dynamical_feedback}. The derivation for $X$ proceeds in a similar manner and is detailed below for completeness. The computations are similar to those performed in~\cite{auriol2019explicit}.
Since we are not interested in the transient behavior of~\eqref{eq:hyperbolic_target_2}, we assume zero initial conditions throughout. Applying the Laplace transform to~\eqref{eq:hyperbolic_target_2} then yields, for all $s \in \C$,
\begin{equation}
\label{eq:laplace-input}
\left\{
\begin{aligned}
&s\widehat\alpha(s,x)+\Lambda^+ \partial_x\widehat\alpha(s,x)
=G(x)\widehat\beta(s,0)+h_\alpha(x)\widehat U(s),\\
&s\widehat\beta(s,x)-\Lambda^-\partial_x\widehat\beta(s,x)
=H(x)\widehat X(s)+h_\beta(x)\widehat U(s),\\
&\widehat\alpha(s,0)=Q\widehat\beta(s,0)+B_0\widehat U(s),\\
&\widehat X(s)=R\widehat\alpha(s,1)\\
&+\int_0^1\Big(F_\alpha(\nu)\widehat\alpha(s,\nu)+F_\beta(\nu)\widehat\beta(s,\nu)\Big)\,d\nu+B_1\widehat U(s).
\end{aligned}
\right.
\end{equation}
\paragraph{Step 1: Solve the equation for \(\beta\).}
For each \(j\in\{1,\dots,m\}\),
\begin{align*}
&s\widehat\beta_j(s,x)-\mu_j\partial_x\widehat\beta_j(s,x)
=
\big(H(x)\widehat X(s)\big)_j+\big(h_\beta(x)\widehat U(s)\big)_j,\\&
\widehat\beta_j(s,1)=\widehat X_j(s).
\end{align*}
Solving yields
\begin{align}
\label{eq:beta-laplace-input-component}
\widehat\beta_j(s,x)
&=
e^{-s(1-x)/\mu_j}\widehat X_j(s)
\\
&+\int_x^1 \frac1{\mu_j}e^{-s(\xi-x)/\mu_j}\big(H(\xi)\widehat X(s)\big)_j\,d\xi
\\
&+\int_x^1 \frac1{\mu_j}e^{-s(\xi-x)/\mu_j}(h_\beta(\xi)\widehat U(s))_j\,d\xi\;. \nonumber
\end{align}
Let us consider the selector matrices
\[
P_j:=l_j l_j^\star\in\R^{m\times m},\qquad j=1,\dots,m,
\]
where $(l_j)$ are the vectors of the canonical basis of $\R^m$.
We have
$$P_j y = y_j l_j, \qquad y\in \R^m.$$
Hence,
\begin{align}
\label{eq:beta-laplace-input}
&\widehat\beta(s,x)
=\sum_{j=1}^m\int_x^1 \frac1{\mu_j}e^{-s(\xi-x)/\mu_j}P_jh_\beta(\xi)\,d\xi \widehat U(s) 
\\&+\big(
\sum_{j=1}^m e^{-s(1-x)/\mu_j}P_j
\\&+
\sum_{j=1}^m\int_x^1 \frac1{\mu_j}e^{-s(\xi-x)/\mu_j}P_jH(\xi)\,d\xi
\big)\widehat X(s). \nonumber
\end{align}
Taking inverse Laplace transforms, we obtain 
\begin{equation}
\begin{aligned}
\label{eq:beta-x-time-input}
\beta(t,x)
&=
\sum_{j=1}^m P_jX\!\left(t-\frac{1-x}{\mu_j}\right)
+\int_0^{\frac1{\mu_1}}N_{\beta,x}(\nu)X(t-\nu)\,d\nu
\\
&+\int_0^{\frac1{\mu_1}}M_{\beta,x}(\nu)U(t-\nu)\,d\nu,
\end{aligned}
\end{equation}
with
\begin{equation}
\label{eq:Nbeta-x-input}
N_{\beta,x}(\nu)
=
\sum_{j=1}^m P_jH(x+\mu_j\nu)\,\mathbf 1_{[0,\,(1-x)/\mu_j]}(\nu),
\end{equation}
and
\begin{equation}
\label{eq:Mbeta-x}
M_{\beta,x}(\nu)
=
\sum_{j=1}^m P_jh_\beta(x+\mu_j\nu)\,\mathbf 1_{[0,\,(1-x)/\mu_j]}(\nu).
\end{equation}
Indeed, for each \(j\), with \(\xi=x+\mu_j\nu\),
$
\int_x^1 \frac1{\mu_j}e^{-s(\xi-x)/\mu_j}P_jh_\beta(\xi)\,d\xi
=
\int_0^{(1-x)/\mu_j} e^{-s\nu}P_jh_\beta(x+\mu_j\nu)\,d\nu,
$

$
    \int_x^1 \frac{1}{\mu_j} e^{-s(\xi-x)/\mu_j} P_j H(\xi)\,d\xi
=
\int_0^{(1-x)/\mu_j} e^{-s\nu} P_j H(x+\mu_j\nu )\,d\nu.
$
\paragraph{Step 2: Solve the equation for $\alpha$}

For each \(i\in\{1,\dots,n\}\), we have
\[
s\widehat\alpha_i(s,x)+\lambda_i\partial_x\widehat\alpha_i(s,x)
=
\big(G(x)\widehat\beta(s,0)\big)_i+\big(h_\alpha(x)\widehat U(s)\big)_i.
\]
with
\[
\widehat\alpha(s,0)=Q\widehat\beta(s,0)+B_0\widehat U(s).
\]
Therefore,
\begin{equation}
\begin{aligned}
\label{eq:alpha-laplace-input-component}
&\widehat\alpha_i(s,x)
=
e^{-sx/\lambda_i}\big(Q\widehat\beta(s,0)+B_0\widehat U(s)\big)_i
\\&+
\int_0^x \frac1{\lambda_i}e^{-s(x-\xi)/\lambda_i}
\big(G(\xi)\widehat\beta(s,0)+h_\alpha(\xi)\widehat U(s)\big)_i\,d\xi.
\end{aligned}
\end{equation}
As before, let \((e_i)\) be the canonical basis of \(\mathbb R^n\) and define
\[
E_i:=e_ie_i^\star\in\mathbb R^{n\times n},\qquad i=1,\dots,n.
\]
We obtain in the time domain
\begin{equation}
\label{eq:alpha-time-raw-input}
\begin{aligned}
\alpha(t,x)
={}&
\sum_{i=1}^n E_iQ\,\beta\!\left(t-\frac{x}{\lambda_i},0\right)
+
\sum_{i=1}^n E_iB_0\,U\!\left(t-\frac{x}{\lambda_i}\right)
\\
&+
\sum_{i=1}^n\int_0^x \frac1{\lambda_i}E_iG(\xi)\,
\beta\!\left(t-\frac{x-\xi}{\lambda_i},0\right)\,d\xi
\\&+
\sum_{i=1}^n\int_0^x \frac1{\lambda_i}E_i h_\alpha(\xi)\,
U\!\left(t-\frac{x-\xi}{\lambda_i}\right)\,d\xi.
\end{aligned}
\end{equation}

Substituting \eqref{eq:beta-x-time-input} (at $x=0$) into \eqref{eq:alpha-time-raw-input} yields a decomposition into point delays and distributed terms, both in \(X\) and in \(U\).
\begin{equation}
\label{eq:alpha-final-input}
\begin{aligned}
&\alpha(t,x)
=
\sum_{i=1}^n\sum_{j=1}^m
E_iQP_j X\!\left(t-\frac{x}{\lambda_i}-\frac1{\mu_j}\right)
\\
&+\int_0^{x/\lambda_{1}+\frac{1}{\mu_{1}}}N_{\alpha,x}(\nu)X(t-\nu)\,d\nu
+\sum_{i=1}^n E_iB_0\,U\!\left(t-\frac{x}{\lambda_i}\right)
\\&+\int_0^{x/\lambda_{1}+\frac{1}{\mu_{1}}}M_{\alpha,x}(\nu)U(t-\nu)\,d\nu.
\end{aligned}
\end{equation}
with
\begin{equation}
\label{eq:Nalpha-x}
\begin{aligned}
&N_{\alpha,x}(\nu)
=
\sum_{i=1}^n
E_i Q\,N_{\beta,0}\!\left(\nu-\frac{x}{\lambda_i}\right)
\ind_{\left[\frac{x}{\lambda_i},\,\frac{x}{\lambda_i}+\frac{1}{\mu_{1}}\right]}(\nu) \quad 
\\
&+
\sum_{i=1}^n\sum_{j=1}^m
E_i G\!\big(x-\lambda_i(\nu-\tfrac{1}{\mu_j})\big)P_j\,
\ind_{\left[\frac{1}{\mu_j},\,\frac{1}{\mu_j}+\frac{x}{\lambda_i}\right]}(\nu)
\\ 
&+
\sum_{i=1}^n
\int_0^x \frac{1}{\lambda_i} E_i G(\xi)\,
N_{\beta,0}\!\left(\nu-\frac{x-\xi}{\lambda_i}\right)
\ind_{\left[\frac{x-\xi}{\lambda_i},\,\frac{x-\xi}{\lambda_i}+\frac{1}{\mu_{1}}\right]}(\nu)\,d\xi 
\end{aligned}
\end{equation}
and,
\begin{equation}
\label{eq:Malpha-x}
\begin{aligned}
&M_{\alpha,x}(\nu)
=
\sum_{i=1}^n
E_iQ\,M_{\beta,0}\!\left(\nu-\frac{x}{\lambda_i}\right)
\mathbf 1_{\left[\frac{x}{\lambda_i},\,\frac{x}{\lambda_i}+\frac{1}{\mu_1}\right]}(\nu)
\\
&+
\sum_{i=1}^n
E_i h_\alpha(x-\lambda_i\nu)\mathbf 1_{[0,\,x/\lambda_i]}(\nu)
\\
&+
\sum_{i=1}^n
\int_0^x \frac1{\lambda_i}E_iG(\xi)\,
M_{\beta,0}\!\left(\nu-\frac{x-\xi}{\lambda_i}\right)
\mathbf 1_{\left[\frac{x-\xi}{\lambda_i},\,\frac{x-\xi}{\lambda_i}+\frac{1}{\mu_1}\right]}(\nu)\,d\xi.
\end{aligned}
\end{equation}
\paragraph{Step 3: Pointwise part in the IDE}
We now set \(x=1\) in \eqref{eq:alpha-final-input}.
\begin{equation}
\label{eq:Ralpha1-input-expanded}
\begin{aligned}
&R\alpha(t,1)
=
\sum_{i=1}^n\sum_{j=1}^m
RE_iQP_j X\!\left(t-\frac1{\lambda_i}-\frac1{\mu_j}\right)
\\&+\int_0^{\tau_*}RN_{\alpha,1}(\nu)X(t-\nu)\,d\nu
\\
&+
\sum_{i=1}^n RE_iB_0\,U\!\left(t-\frac1{\lambda_i}\right)
+\int_0^{\tau_*}RM_{\alpha,1}(\nu)U(t-\nu)\,d\nu,
\end{aligned}
\end{equation}
where
\[
\tau_* := \max_{\substack{1\le i\le n\\1\le j\le m}}\tau_{ij} = \frac{1}{\lambda_1}+\frac{1}{\mu_1} =: \tau_{11},
\]
with 
\begin{equation}
\label{eq:tauij-input}
\tau_{ij}:=\frac1{\lambda_i}+\frac1{\mu_j},
\qquad i=1,\dots,n,\ \ j=1,\dots,m.
\end{equation}
Let \(\{\tau_k\}_{k=1}^l\) be the set of distinct values among the \(\tau_{ij}\), and define
\begin{equation}
\label{eq:Ak-input}
A_k:=\sum_{(i,j):\,\tau_{ij}=\tau_k}RE_iQP_j.
\end{equation}
Then
\begin{equation}
\label{eq:point-delay-sum-input}
\sum_{i=1}^n\sum_{j=1}^m
RE_iQP_j X(t-\tau_{ij})
=
\sum_{k=1}^l A_kX(t-\tau_k).
\end{equation}
We also define the input point-delay matrices
\begin{equation}
\label{eq:Btildei}
\widetilde B_i:=RE_iB_0,\qquad i=1,\dots,n.
\end{equation}
Thus
\begin{equation}
\label{eq:Ralpha-total-input}
\begin{aligned}
&R\alpha(t,1)
=
\sum_{k=1}^l A_kX(t-\tau_k)
+\int_0^{\tau_*}N_R(\nu)X(t-\nu)\,d\nu
\\&+\sum_{i=1}^n \widetilde B_i U\!\left(t-\frac1{\lambda_i}\right)
+\int_0^{\tau_*}M_R(\nu)U(t-\nu)\,d\nu,
\end{aligned}
\end{equation}
where
\begin{equation}
\label{eq:NR-MR}
N_R(\nu):=RN_{\alpha,1}(\nu),
\qquad
M_R(\nu):=RM_{\alpha,1}(\nu).
\end{equation}
Identifying $\theta_i := 1/\lambda_i$ for $i=1,\dots,n$,  equation~\eqref{eq:Ralpha-total-input} yields the desired form.
\paragraph{Step 4: Distributed part for the IDE.}
We have
$
X(t)=R\alpha(t,1)+\int_0^1F_\alpha(x)\alpha(t,x)\,dx+\int_0^1F_\beta(x)\beta(t,x)\,dx+B_1U(t).
$

\paragraph{Contribution of \(\int_0^1F_\beta(x)\beta(t,x)\,dx\).}
Using \eqref{eq:beta-x-time-input}, the pointwise state part gives
\begin{equation}
\label{eq:NFbeta-pt-input}
N_{F_\beta}^{pt}(\nu)
=
\sum_{j=1}^m
\mu_jF_\beta(1-\mu_j\nu)P_j\,\mathbf 1_{[0,\,1/\mu_j]}(\nu),
\end{equation}
since with $x = 1-\mu_j\nu,$

$
\int_0^1F_\beta(x)P_jX\!\left(t-\frac{1-x}{\mu_j}\right)\,dx
=
\int_0^{1/\mu_j}\mu_jF_\beta(1-\mu_j\nu)P_jX(t-\nu)\,d\nu.
$
The distributed state part gives
\begin{equation}
\label{eq:NFbeta-db-input}
N_{F_\beta}^{db}(\nu)
=
\int_0^1F_\beta(x)N_{\beta,x}(\nu)\,dx.
\end{equation}
The input contribution gives
\begin{equation}
\label{eq:MFbeta}
M_{F_\beta}(\nu)
=
\int_0^1F_\beta(x)M_{\beta,x}(\nu)\,dx.
\end{equation}
Therefore,
\begin{equation}
\label{eq:Fbeta-total-input}
\begin{aligned}
\int_0^1F_\beta(x)\beta(t,x)\,dx
&=
\int_0^{\frac{1}{\mu_1}}\Big(N_{F_\beta}^{pt}(\nu)+N_{F_\beta}^{db}(\nu)\Big)X(t-\nu)\,d\nu
\\&+
\int_0^{\frac{1}{\mu_1}}M_{F_\beta}(\nu)U(t-\nu)\,d\nu.
\end{aligned}
\end{equation}

\paragraph{Contribution of \(\int_0^1F_\alpha(x)\alpha(t,x)\,dx\).}
Using \eqref{eq:alpha-final-input}, the pointwise state part gives
\begin{equation}
\label{eq:NFalpha-pt-input}
N_{F_\alpha}^{pt}(\nu)
=
\sum_{i=1}^n\sum_{j=1}^m
\lambda_iF_\alpha\!\big(\lambda_i(\nu-\tfrac1{\mu_j})\big)
E_iQP_j\,
\mathbf 1_{\left[\frac1{\mu_j},\,\frac1{\mu_j}+\frac1{\lambda_i}\right]}(\nu),
\end{equation}
because
\begin{align*}
&\int_0^1F_\alpha(x)E_iQP_jX\!\left(t-\frac{x}{\lambda_i}-\frac1{\mu_j}\right)\,dx
\\&=
\int_{1/\mu_j}^{1/\mu_j+1/\lambda_i}
\lambda_iF_\alpha\!\big(\lambda_i(\nu-\tfrac1{\mu_j})\big)
E_iQP_jX(t-\nu)\,d\nu.
\end{align*}
The distributed state part is
\begin{equation}
\label{eq:NFalpha-db-input}
N_{F_\alpha}^{db}(\nu)
=
\int_0^1F_\alpha(x)N_{\alpha,x}(\nu)\,dx.
\end{equation}
The pointwise input part gives
\begin{equation}
\label{eq:MFalpha-ptU}
M_{F_\alpha}^{pt,U}(\nu)
=
\sum_{i=1}^n
\lambda_iF_\alpha(\lambda_i\nu)E_iB_0\,\mathbf 1_{[0,\,1/\lambda_i]}(\nu),
\end{equation}
since with $x= \lambda_i \nu,$
\begin{align*}
&\int_0^1F_\alpha(x)E_iB_0\,U\!\left(t-\frac{x}{\lambda_i}\right)\,dx
\\&=
\int_0^{1/\lambda_i}\lambda_iF_\alpha(\lambda_i\nu)E_iB_0\,U(t-\nu)\,d\nu.
\end{align*}
The distributed input part is
\begin{equation}
\label{eq:MFalpha-dbU}
M_{F_\alpha}^{db,U}(\nu)
=
\int_0^1F_\alpha(x)M_{\alpha,x}(\nu)\,dx.
\end{equation}
Combining these contributions and extending the kernels by zero on the possibly larger domain of definition $(0, \tau_*)$ when necessary to obtain a common support,
\begin{equation}
\begin{aligned}
\label{eq:Falpha-total-input}
&\int_0^1F_\alpha(x)\alpha(t,x)\,dx
=
\int_0^{\tau_*}\Big(N_{F_\alpha}^{pt}(\nu)+N_{F_\alpha}^{db}(\nu)\Big)X(t-\nu)\,d\nu
\\&+
\int_0^{\tau_*}\Big(M_{F_\alpha}^{pt,U}(\nu)+M_{F_\alpha}^{db,U}(\nu)\Big)U(t-\nu)\,d\nu.
\end{aligned}
\end{equation}
\paragraph{Step 5: Conclusion}
Combining \eqref{eq:Ralpha-total-input}, \eqref{eq:Fbeta-total-input},
and \eqref{eq:Falpha-total-input} yields~\eqref{eq:ide-input-target}, 
\begin{equation*}
\begin{aligned}
X(t)
={}&
\sum_{k=1}^l A_kX(t-\tau_k)
+\int_0^{\tau_*}N(\nu)X(t-\nu)\,d\nu
\\
&+
\sum_{i=1}^n \widetilde B_iU\!\left(t-\theta_i\right)
+\int_0^{\tau_*}M(\nu)U(t-\nu)\,d\nu
+B_1U(t),
\end{aligned}
\end{equation*}
with
\begin{equation}
\label{eq:N-final-input}
N(\nu)
=
N_R(\nu)
+
N_{F_\alpha}^{pt}(\nu)
+
N_{F_\alpha}^{db}(\nu)
+
N_{F_\beta}^{pt}(\nu)
+
N_{F_\beta}^{db}(\nu),
\end{equation}
and
\begin{equation}
\label{eq:MU-final}
M(\nu)
=
M_R(\nu)
+
M_{F_\alpha}^{pt,U}(\nu)
+
M_{F_\alpha}^{db,U}(\nu)
+
M_{F_\beta}(\nu).
\end{equation}
    \end{proof}
    \subsection{Proof of Proposition~\ref{prop:full_rank_ide}}
\label{section:preuve_prop_full_rank}
\begin{proof}
We argue by contraposition. Assume that \eqref{eq:ide-rank-ass} fails. Then there exists \(s\in\mathbb C\) with $\Re s>-\omega$ such that
\[
\operatorname{rank}[\qhat(s),-\phat(s)]<m.
\]
Hence there exists \(
\eta\in\mathbb C^m\setminus\{0\}
\)
such that
\begin{equation}
\label{eq:left-kernel}
\qhat(s)^\star\eta=0,
\qquad
\phat(s)^\star\eta=0.
\end{equation}
We construct $(\phi,\psi)\in D(\mathcal A_1^\star)\setminus\{(0,0)\}$ satisfying
\[
(s-\mathcal A_1^\star)(\phi,\psi)=0,
\qquad
\mathcal B_1^\star(\phi,\psi)=0.
\]
Using Proposition~\ref{prop:stabilizability_target2}, it will contradict~\eqref{eq:pde-stab-ass}.

\medskip

\noindent
\textbf{Step 1: construction of \(\phi\) and \(\psi\).}
Define \(\phi\) as the solution of
\begin{equation}
\label{eq:phi-adj-eq}
s\phi(x)=\Lambda^+\phi_x(x)+F_\alpha(x)^\star\eta,
\qquad
\Lambda^+\phi(1)=R^\star\eta.
\end{equation}
Since \(\Lambda^+\) is diagonal, this problem is solved componentwise and yields
\begin{equation}
\label{eq:phi-explicit}
\Lambda^+\phi(x)
=
e^{-s(1-x){\Lambda^+}^{-1}}R^\star\eta
+
\int_x^1 e^{-s(\xi-x)\Lambda^{+^{-1}}}F_\alpha(\xi)^\star\eta\,d\xi.
\end{equation}
In particular, \(\phi\in H^1((0,1);\mathbb C^n)\). Next define
\begin{equation}
\label{eq:Y0-def}
Y_0:=Q^\star\Lambda^+\phi(0)+\int_0^1 G(x)^\star\phi(x)\,dx,
\end{equation}
and let \(\psi\) be the solution of
\begin{equation}
\label{eq:psi-adj-eq}
s\psi(x)=-\Lambda^-\psi_x(x)+F_\beta(x)^\star\eta,
\qquad
\Lambda^-\psi(0)=Y_0.
\end{equation}
Again, since \(\Lambda^-\) is diagonal, this is solved componentwise:
\begin{equation}
\label{eq:psi-explicit}
\Lambda^-\psi(x)
=
e^{-sx{\Lambda^-}^{-1}}Y_0
+
\int_0^x e^{-s(x-\xi){\Lambda^-}^{-1}}F_\beta(\xi)^\star\eta\,d\xi.
\end{equation}
Hence \(\psi\in H^1((0,1);\mathbb C^m)\), and by construction
\begin{equation}
\label{eq:left-adj-bc}
Q^\star\Lambda^+\phi(0)-\Lambda^-\psi(0)+\int_0^1 G(x)^\star\phi(x)\,dx=0.
\end{equation}

\medskip

\noindent
\textbf{Step 2: identification of \(\zeta\).}
Set
\begin{equation}
\label{eq:zeta-def-proof}
\zeta(\phi, \psi):=\Lambda^-\psi(1)+\int_0^1 H(x)^\star\psi(x)\,dx.
\end{equation}
Repeating the same computations as in the derivation of the IDE, but now for the adjoint system, one obtains the transposed delay relation
\begin{equation}
\label{eq:zeta-transposed-clean}
\zeta(\phi, \psi)
=
\sum_{k=1}^l A_k^\star e^{-s\tau_k}\eta
+
\int_0^{\tau_*} N(\nu)^\star e^{-s\nu}\eta\,d\nu.
\end{equation}
Comparing with~\eqref{eq:Qs-def-proof}
we get
\begin{equation}
\label{eq:Q-transpose-zeta}
\qhat(s)^\star\eta=\eta-\zeta(\phi, \psi).
\end{equation}
Since \(\qhat(s)^\star\eta=0\) by \eqref{eq:left-kernel}, it follows that
\begin{equation}
\label{eq:zeta-equals-eta-goal}
\zeta(\phi, \psi)=\eta.
\end{equation}
Therefore \(\phi\) and \(\psi\) satisfy
\begin{equation}
\label{eq:adjoint-eig-final}
(s-A^\star)(\phi,\psi)=0.
\end{equation}
Moreover, using the same characteristic computations as before, we obtain
\begin{equation}
\label{eq:Cstar-bookkeeping}
\begin{aligned}
&B_0^\star \Lambda^+\phi(0)
+
\int_0^1\Big(h_\alpha(x)^\star\phi(x)+h_\beta(x)^\star\psi(x)\Big)\,dx
\\&=
\sum_{i=1}^n \widetilde B_i^\star e^{-s/\lambda_i}\eta
+
\int_0^{\tau_*} M(\nu)^\star e^{-s\nu}\eta\,d\nu.
\end{aligned}
\end{equation}
Since \(\zeta(\phi, \psi)=\eta\), we deduce from \eqref{eq:B1star-def} that
\begin{align}
\mathcal B_1^\star(\phi,\psi)
&=
B_1^\star\eta
+
\sum_{i=1}^n \widetilde B_i^\star e^{-s/\lambda_i}\eta
+
\int_0^{\tau_*} M(\nu)^\star e^{-s\nu}\eta\,d\nu
\notag\\
&=
\left(
B_1+\sum_{i=1}^n \widetilde B_i e^{-s/\lambda_i}
+\int_0^{\tau_*} M(\nu)e^{-s\nu}\,d\nu
\right)^\star\eta
\notag\\
&=
\phat(s)^\star\eta.
\label{eq:phat-transpose-control-detailed}
\end{align}
Hence, by \eqref{eq:left-kernel},
\[
\mathcal B_1^\star(\phi,\psi)=0.
\]

\medskip

\noindent
\textbf{Step 3: contradiction.}
We have constructed \((\phi,\psi)\in D(\mathcal A_1^\star)\) such that
\[
(s-\mathcal A_1^\star)(\phi,\psi)=0,
\qquad
\mathcal B_1^\star(\phi,\psi)=0.
\]
Moreover \((\phi,\psi)\neq(0,0)\). Indeed, if \((\phi,\psi)=(0,0)\), then \eqref{eq:zeta-def-proof} gives \(\zeta(\phi, \psi)=0\), hence \(\eta=0\) by \eqref{eq:zeta-equals-eta-goal}, a contradiction. Thus
\[
\ker(s-\mathcal A_1^\star)\cap\ker \mathcal B_1^\star\neq\{0\},
\]
which contradicts \eqref{eq:pde-stab-ass} using Proposition~\ref{prop:stabilizability_target2}. Therefore \eqref{eq:ide-rank-ass} must hold.
\end{proof}
\subsection{Proof of Theorem~\ref{thm:rank_reduction}}
\label{section:proof_thm_stable_rank}
We begin with some intermediate results. 
\begin{lem}\label{lem:dev_det_q}
There exists a compactly supported function $\Ntarget\in L^2(0, \infty)$ such that for all $s\in\mathbb{C}_{\omega_2}$, 

\[\det\widehat{q}(s) = \det\Delta_0(s) + \Lap \Ntarget(s),\]
with $\widehat{q}$ defined in~\eqref{eq:Qs-def-proof} and $\omega_2$ in~\eqref{eq:defomega2}.
\end{lem}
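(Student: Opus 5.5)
We expand the determinant by the Leibniz formula and separate the terms that involve only the principal part from those that contain at least one factor of $\Nhat$. Write $\qhat(s)=\Delta_0(s)-\Nhat(s)$. Here each entry of $\Delta_0(s)$ is $\delta_{ij}-\sum_k (A_k)_{ij}e^{-\tau_k s}$, which is the Laplace transform of the finite measure $\delta_{ij}\delta_0-\sum_k (A_k)_{ij}\delta_{\tau_k}$. Each entry of $\Nhat(s)$ is the Laplace transform of $N_{ij}\in L^2(0,\tau_*)$, extended by zero. Multilinearity of the determinant in the columns gives
\[
\det\qhat(s)=\det\Delta_0(s)+\sum_{\emptyset\neq J\subset\{1,\dots,m\}} (-1)^{|J|}\det\bigl[\Delta_0(s),\Nhat(s)\bigr]_J ,
\]
where $[\Delta_0,\Nhat]_J$ is the matrix whose columns with index in $J$ are taken from $\Nhat$ and the others from $\Delta_0$. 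Every term in the sum is a finite sum of products of the form $\prod_{r}\widehat{\mu_r}(s)\prod_{p}\Nhat_{i_pj_p}(s)$. In each product the $\mu_r$ are finite combinations of Dirac masses at points of $[0,\tau_*]$, and there is at least one $\Nhat$ factor.

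The key step is to show that each such product is the Laplace transform of a compactly supported $L^2$ function. We use the convolution theorem, which holds for all $s\in\C$ since every object involved is compactly supported on $[0,\infty)$. Convolving $f\in L^2$ supported in $[0,a]$ with $\delta_\tau$ gives a translate, still in $L^2$ and supported in $[0,a+\tau]$. Convolving two compactly supported $L^2$ functions gives a continuous, bounded, compactly supported function, by Young's inequality with $L^2*L^2\subset L^\infty$ and support in the sum of supports, hence an $L^2$ function. Convolving with $\delta_0$ is the identity. By induction on the number of factors, any product containing at least one $\Nhat$ factor equals $\Lap g(s)$ with $g\in L^2$ supported in $[0,m\tau_*]$. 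Summing these finitely many contributions defines $\Ntarget$, which is compactly supported in $[0,m\tau_*]$ and belongs to $L^2$.

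The identity then holds for every $s\in\C$, and in particular on $\C_{\omega_2}$. The only point requiring care is the absolute convergence of the Laplace integrals, and it is automatic here because all functions have compact support. I expect the main obstacle to be purely bookkeeping: keeping track of the mixed measure and function products so that we never need an $L^1$-versus-$L^2$ distinction beyond the compact-support argument above. Note that the constant term $\delta_0$ only appears inside the $\Delta_0$ factors, so the product of all $\delta$-terms is exactly $\det\Delta_0(s)$ and is not absorbed into $\Ntarget$.
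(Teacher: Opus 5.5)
Your proposal is correct and follows essentially the same route as the paper. Both arguments use column-wise multilinearity of the determinant to isolate $\det\Delta_0(s)$, then a convolution argument (Dirac masses act as translations, convolutions of compactly supported $L^2$ functions remain compactly supported and $L^2$) to show that every term containing at least one $\Nhat$ factor is the Laplace transform of a compactly supported $L^2$ function. Your version is a bit more explicit than the paper's, via Young's inequality and the support bound $[0,m\tau_*]$, but the structure is identical.
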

\begin{proof}
Writing $\widehat{q}$ column-wise as
\[
\widehat{q}(s) = \bigl[\Delta_{0,1}(s)-\widehat{N}_1(s)\ \cdots\ \Delta_{0,m}(s)-\widehat{N}_m(s)\bigr],
\]
where $\widehat{N}_i(s)$ is the Laplace transform of the $i$-th column of
$N\in\mathbb{R}^{m\times m}$ and $\Delta_{0,i}(s)$ is the $i$-th column of
$\Delta_0$ defined in~\eqref{def:pp}, multilinearity of the determinant gives
\[
\det\widehat{q}(s) = \det\Delta_0(s) + \Lap \Ntarget(s),
\]
with
\begin{equation}\label{def:Ntarget}
\Lap \Ntarget(s) := (-1)^m\det\widehat{N}(s)
+ \sum_{\substack{L\subset\{1,\dots,m\}\\ L\neq\varnothing,\,L\neq\{1,\dots,m\}}}
\det M_L(s),
\end{equation}
where $M_L(s)$ is the $m\times m$ matrix with columns $m_j$ defined for all $j \in \{1,\dots,m\}$
\[
m_j = \begin{cases}
\Delta_{0,j}(s) & \text{if } j\notin L,\\
-\widehat{N}_j(s) & \text{if } j\in L.
\end{cases}
\]
It remains to show that $\Lap \Ntarget$ is the Laplace transform of a function $N_0 \in L^2_{\omega_2}(0, \infty)$.
Each entry of $\widehat{N}$ is the Laplace transform of a compactly supported function in $L^2((0,\infty),\mathbb{R})$. Consequently, the inverse Laplace transform of $\det\widehat{N}$ is given in the time domain by a finite sum of convolutions of compactly supported $L^2$-functions; since convolution preserves both compact support and $L^2$-regularity, $\det\widehat{N}$ is the Laplace transform of an integrable function compactly supported in $(0, \infty)$. The same argument applies to each $\det M_L(s)$: its entries are products of entries of $\widehat{N}$ with entries of $\Delta_0$, which in the time domain correspond to convolutions of compactly supported $L^2$-functions with the measures associated with the inverse Laplace transform of $\Delta_0$. Hence each $\det M_L$ is the Laplace transform of a causal compactly supported function in $L^2((0,\infty),\mathbb{R})$. 
We conclude that $\Lap N_0$ is the Laplace transform of a compactly supported function $N_0 \in L^2(0, \infty)$.
\end{proof}

\begin{prop}\label{prop:Z0_fini}
Under Assumption~\ref{Assum_OL_stab}, the zero set of $\det\widehat{q}$ in
$\mathbb{C}_{\omega_2}$, namely
\[
Z_q := \bigl\{s\in\mathbb{C}_{\omega_2} : \det\widehat{q}(s)=0\bigr\},
\]
is finite.
\end{prop}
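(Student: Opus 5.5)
The plan is to combine Lemma~\ref{lem:dev_det_q} with Corollary~\ref{coro:pp_ide_stable}. On $\mathbb{C}_{\omega_2}$ we write
\[
\det\widehat{q}(s) = \det\Delta_0(s) + \Lap\Ntarget(s).
\]
Since $\omega_2<\omega_1$, Corollary~\ref{coro:pp_ide_stable} gives $|\det\Delta_0(s)|>\infpp$ on $\mathbb{C}_{\omega_2}$. The zeros of $\det\widehat q$ can therefore only occur where $|\Lap\Ntarget(s)|\geq\infpp$. The strategy is to show that this region is bounded. The zero set is then the zero set of a nonzero entire function inside a compact set, hence finite.

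\textbf{Step 1: decay of $\Lap\Ntarget$.} Because $\Ntarget\in L^2(0,\infty)$ is compactly supported, say in $[0,\tau_0]$, it also belongs to $L^1$ and $\Lap\Ntarget$ is entire. On the closed half-plane $\Re s\geq -\omega_2$ we have $|e^{-st}|\leq e^{\omega_2 t}$, so $e^{\omega_2 t}\Ntarget\in L^1$ gives a uniform bound on $\Lap\Ntarget$. To obtain decay, I will use the Riemann--Lebesgue lemma along vertical lines: $\Lap\Ntarget(\sigma+i y)$ is the Fourier transform of $e^{-\sigma t}\Ntarget(t)$. The map $\sigma\mapsto e^{-\sigma t}\Ntarget$ is continuous in $L^1$ for $\sigma$ in any compact interval. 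Approximating by step functions therefore gives $\Lap\Ntarget(\sigma+iy)\to 0$ as $|y|\to\infty$, uniformly for $\sigma\in[-\omega_2,\sigma_0]$. For large real part, the direct bound $|\Lap\Ntarget(s)|\leq \int_0^{\tau_0} e^{-\Re(s) t}|\Ntarget(t)|\,dt$ tends to $0$ as $\Re s\to+\infty$ by dominated convergence. Combining the two regimes, there exists $R>0$ such that $|\Lap\Ntarget(s)|<\infpp/2$ for all $s\in\mathbb{C}_{\omega_2}$ with $|s|>R$.

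\textbf{Step 2: localisation and finiteness.} For $|s|>R$ in $\mathbb{C}_{\omega_2}$ we get $|\det\widehat q(s)|\geq \infpp-\infpp/2>0$. Hence $Z_q$ is contained in the compact set $K:=\{|s|\leq R,\ \Re s\geq-\omega_2\}$. The function $\det\widehat q$ is entire, since $\det\Delta_0$ is an exponential polynomial and $\Lap\Ntarget$ is entire. It is not identically zero, because it is nonzero for large real $s$. Its zeros are therefore isolated and have no accumulation point in $\mathbb{C}$, so only finitely many lie in $K$, and $Z_q$ is finite.

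\textbf{Main obstacle.} The delicate point is Step~1: the decay must hold uniformly up to the boundary line $\Re s=-\omega_2$, and not just on each vertical line. Two ingredients handle this. The first is the compact support of $\Ntarget$ from Lemma~\ref{lem:dev_det_q}, which makes $e^{\omega_2 t}\Ntarget$ integrable. The second is the equicontinuity argument in $\sigma$. If uniformity at the boundary line proved awkward, I would first shrink to a slightly smaller $\omega_2$, which is still allowed since~\eqref{eq:defomega2} only imposes a strict inequality. I would then apply the same argument on the closed strip, for which strict inequality gives room.
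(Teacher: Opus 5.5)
Your proof is correct and follows essentially the same route as the paper: you split $\det\widehat q=\det\Delta_0+\Lap\Ntarget$ and use the uniform lower bound $\infpp$ from Corollary~\ref{coro:pp_ide_stable}. Together with the decay of $\Lap\Ntarget$ (Riemann--Lebesgue/dominated convergence), this confines $Z_q$ to a bounded set, and you conclude by isolatedness of zeros of a nonzero holomorphic function. Your version is slightly more careful than the paper's. You justify that the decay is uniform up to the line $\Re s=-\omega_2$. You also take $K$ in the closed half-plane and use that $\det\widehat q$ is entire, which rules out zeros accumulating at the boundary; the paper's ``compact $K\subset\mathbb{C}_{\omega_2}$'' glosses over this point.
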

\begin{proof}
By Lemma~\ref{lem:dev_det_q}, $\det\widehat{q}(s) = \det\Delta_0(s) + \Lap \Ntarget(s)$ with a compactly supported function $\Lap N_0 \in L^2(0, \infty)$. By the Riemann-Lebesgue lemma~\cite[p.~103]{Rudin1991} or dominated convergence, $|\Lap \Ntarget(s)|\to 0$ as $|s|\to\infty$ in $\mathbb{C}_{\omega_2}$, so there exists a compact set $K\subset\mathbb{C}_{\omega_2}$ such that $|\Ntarget(s)|<\infpp$ for all $s\in\mathbb{C}_{\omega_2}\setminus K$, where $\infpp$ is from Corollary~\ref{coro:pp_ide_stable}. By~\eqref{inf:pp_positif}, $\det\widehat{q}(s)\neq 0$ outside $K$, so $Z_q\subset K$. Since $\det\widehat{q}$ is holomorphic and not identically zero (by Assumption~\ref{Assum_OL_stab}, its principal part $\det\Delta_0$ is nonzero), its zeros are isolated. Therefore $Z_q$, being a closed discrete subset of the compact set $K$, is finite.
\end{proof}

\begin{prop}\label{prop:Q_inv_bounded}
Under Assumption~\ref{Assum_OL_stab}, there exists a compact
$K_{\omega_2}\subset\mathbb{C}_{\omega_2}$ such that $s\mapsto\widehat{q}^{-1}(s)$
exists, is holomorphic, and is bounded on $\mathbb{C}_{\omega_2}\setminus K_{\omega_2}$.
\end{prop}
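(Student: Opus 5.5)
The plan is to use the adjugate formula $\widehat{q}^{-1}(s)=\operatorname{adj}\widehat{q}(s)/\det\widehat{q}(s)$. We then bound the numerator from above and the denominator from below, uniformly outside a compact set. For $K_{\omega_2}$ we take the same compact set $K$ as in the proof of Proposition~\ref{prop:Z0_fini}, enlarged if necessary so that it is a closed bounded subset of $\mathbb{C}_{\omega_2}$. Concretely, $K_{\omega_2}:=\{s\in\mathbb{C}_{\omega_2}: |s|\le \rho,\ \Re s\ge -\omega_2'\}$ for some $\omega_2<\omega_2'<\omega_1$ and $\rho$ large. The strip $-\omega_2'>\Re s>-\omega_2$ near the boundary needs no separate treatment, because the bounds below hold on all of $\mathbb{C}_{\omega_2}$ away from a large disk.

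\textbf{Lower bound on the determinant.} By Lemma~\ref{lem:dev_det_q}, $\det\widehat{q}=\det\Delta_0+\widehat{N}_0$ with $N_0$ compactly supported in $L^2$. Since $\omega_2<\omega_1$, Corollary~\ref{coro:pp_ide_stable} gives $|\det\Delta_0(s)|>\infpp$ on $\mathbb{C}_{\omega_2}$. Because $N_0$ is compactly supported, $e^{\omega_2 t}N_0\in L^1$, so $\widehat N_0(s)\to0$ as $|s|\to\infty$ uniformly in the closed half-plane $\Re s\ge-\omega_2$. This follows from the Riemann--Lebesgue lemma applied along vertical lines, together with dominated convergence in $\Re s\to+\infty$. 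The uniformity in $\Re s$ on the bounded strip is the one technical point. I would handle it by noting that $\{e^{-\sigma t}N_0\}$ for $\sigma\in[-\omega_2,\sigma_0]$ is a compact family in $L^1$, and Riemann--Lebesgue is uniform on compact subsets of $L^1$. Hence there is $\rho$ with $|\widehat N_0(s)|<\infpp/2$ for $|s|>\rho$, which yields $|\det\widehat{q}(s)|>\infpp/2$ there. This gives invertibility, and holomorphy follows since $\det\widehat q$ is holomorphic and nonvanishing.

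\textbf{Upper bound on the adjugate.} Each entry of $\widehat{q}(s)=\Delta_0(s)-\widehat N(s)$ is bounded on $\mathbb{C}_{\omega_2}$. We have $|e^{-\tau_k s}|\le e^{\tau_k\omega_2}$, and $|\widehat N_{ij}(s)|\le \|e^{\omega_2 t}N_{ij}\|_{L^1}<\infty$ since $N$ is $L^2$ and compactly supported on $(0,\tau_*)$. The cofactors are polynomials in these entries, so $\operatorname{adj}\widehat q$ is bounded on $\mathbb{C}_{\omega_2}$. Combining the two bounds gives
\[
\|\widehat q^{-1}(s)\|\le \frac{2}{\infpp}\sup_{\mathbb{C}_{\omega_2}}\|\operatorname{adj}\widehat q\|
\]
on $\mathbb{C}_{\omega_2}\setminus K_{\omega_2}$.

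The main obstacle is the uniform decay of $\widehat N_0$ over the whole half-plane, including near the boundary line $\Re s=-\omega_2$. I would address it as described above, using the uniform Riemann--Lebesgue argument for a compact family. Alternatively, I would use an explicit integration-by-parts-free estimate, approximating $N_0$ in $L^1_{\omega_2}$ by step functions, whose transforms decay like $1/|s|$ uniformly.
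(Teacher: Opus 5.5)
Your skeleton is the paper's: $\widehat q^{-1}=\operatorname{adj}\widehat q/\det\widehat q$, with the adjugate bounded above and the determinant bounded below off a compact set. The paper gets the lower bound by citing \cite[Lemma~5]{braun2026spectralexponentialstabilitycriterion}. You instead derive it from Corollary~\ref{coro:pp_ide_stable} and the uniform decay of $\widehat N_0$ from Lemma~\ref{lem:dev_det_q}, and you make explicit the boundedness of $\operatorname{adj}\widehat q$ on $\mathbb{C}_{\omega_2}$, which the paper uses without comment. Both pieces are correct.

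The gap is the bounded part of the boundary strip. A compact subset of the \emph{open} half-plane $\mathbb{C}_{\omega_2}$ lies at positive distance from the line $\Re s=-\omega_2$. So $\mathbb{C}_{\omega_2}\setminus K_{\omega_2}$ always contains a set $\{-\omega_2<\Re s<-\omega_2',\ |s|\le\rho\}$. Your lower bound $|\det\widehat q(s)|>\infpp/2$ holds only for $|s|>\rho$, so you have no control there, and the claim that the strip needs no separate treatment is wrong. Your $K_{\omega_2}$ also has the inequality reversed. With $\omega_2<\omega_2'$ the constraint $\Re s\ge-\omega_2'$ is vacuous on $\mathbb{C}_{\omega_2}$, so the set is not closed, hence not compact. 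With $\omega_2'<\omega_2$ it is compact but leaves exactly that piece uncovered. What you actually prove is boundedness on $\{s\in\mathbb{C}_{\omega_2}:|s|>\rho\}$, i.e.\ outside a compact subset of $\mathbb{C}$ that touches the boundary line. That is the form used to get $|\psi_u|\to 1$ in Step~3 of the proof of Theorem~\ref{thm:rank_reduction}, but it is weaker than the statement.

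Closing the gap needs one more ingredient: $\det\widehat q$ must not vanish on $\{\Re s=-\omega_2,\ |\Im s|\le\rho\}$. This is not automatic. If $\det\widehat q(s_0)=0$ with $\Re s_0=-\omega_2$, then the smallest singular value of $\widehat q(s)$ tends to $0$ as $s\to s_0$ inside $\mathbb{C}_{\omega_2}$. Hence $\|\widehat q^{-1}(s)\|\to\infty$, and no compact $K_{\omega_2}\subset\mathbb{C}_{\omega_2}$ can work.

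To fix it:
\begin{itemize}
\item $\det\widehat q$ is entire and, by your own estimate, not identically zero. So it has finitely many zeros in the compact half-disk $\{\Re s\ge-\omega_2,\ |s|\le\rho\}$.
\item The freedom in \eqref{eq:defomega2} lets you choose $\omega_2$ so that none of these zeros lies on the line $\Re s=-\omega_2$.
\item Take $\omega_2'<\omega_2$ close enough to $\omega_2$ that every zero in $\mathbb{C}_{\omega_2}$ has $\Re s>-\omega_2'$.
\item By continuity and compactness, $|\det\widehat q|\ge c>0$ on $\{-\omega_2\le\Re s\le-\omega_2',\ |s|\le\rho\}$.
\item With $K_{\omega_2}:=\{\Re s\ge-\omega_2',\ |s|\le\rho\}$, your two bounds give $\|\widehat q^{-1}\|\le\max(2/\infpp,1/c)\sup_{\mathbb{C}_{\omega_2}}\|\operatorname{adj}\widehat q\|$ on $\mathbb{C}_{\omega_2}\setminus K_{\omega_2}$.
\end{itemize}
The paper does not discuss this boundary strip explicitly either; it takes the lower bound off $K_{\omega_2}$ directly from the cited lemma.
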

\begin{proof}
By Proposition~\ref{prop:Z0_fini}, $Z_q$ is finite, so there exists a compact
$K_{\omega_2}\subset\mathbb{C}_{\omega_2}$ with $Z_q\subset K_{\omega_2}$.
Up to enlarging $K_{\omega_2}$, \cite[Lemma~5]{braun2026spectralexponentialstabilitycriterion}
gives
\[
\inf_{s\in\mathbb{C}_{\omega_2}\setminus K_{\omega_2}}|\det\widehat{q}(s)|>0.
\]
Hence there exists $C>0$ such that for all $s\in\mathbb{C}_{\omega_2}\setminus K_{\omega_2}$,
\[
\|\widehat{q}^{-1}(s)\|
= \left\|\frac{\operatorname{adjugate}(\widehat{q}(s))}{\det\widehat{q}(s)}\right\|
< C. 
\]
\end{proof}
We now have all the tools to prove Theorem~\ref{thm:rank_reduction}. The proof is structured as follows
\begin{enumerate}
    \item[0.] Choosing $T>0$ such that $K_T$~\eqref{def:K_T} zeros do not intersect the zeros of $\det \Lap q$.
    \item Deriving the equations characterizing bad gains $(u, v)\in \R^m\times \R^{d-1}$.
    \item Choosing $u\in \R^m$ outside a finite union of hyperplanes.
    \item Choosing $v\in \R^{d-1}$ outside a finite set that depends on $u$.
\end{enumerate}
\begin{proof}
Let $\varepsilon>0$. We divide the proof into several steps.

\medskip\noindent\textbf{Step 0: choice of $T$.}
We first choose $T$ so that the zeros of $K_T$ in $\mathbb{C}_{\omega_2}$ do not
intersect $Z_q$.
We have
\small
\[
Z_T := \{s\in\mathbb{C}_{\omega_2} : K_T(s)=0\}
= \left\{s\in\mathbb{C}_{\omega_2}\setminus\{0\} : Ts\in 2\pi i\mathbb{Z}\right\}.
\]
\normalsize
By Proposition~\ref{prop:Z0_fini}, $Z_q$ is finite.
The bad choices of $T$ are those for which $Z_T\cap Z_q\neq\varnothing$, i.e.,
\begin{align*}
\mathcal{T}_{\mathrm{bad}}
:&= \left\{T>0 : \exists\,s\in Z_q,\ Ts\in 2\pi i\mathbb{Z}^\star\right\}
\\&= \bigcup_{s\in Z_q\cap i\mathbb{R}^\star}\frac{2\pi}{|\Im s|}\,\mathbb{N}^\star.
\end{align*}
Since $Z_q$ is finite, $\mathcal{T}_{\mathrm{bad}}$ is countable and not dense
in $\mathbb{R}_+$. We therefore choose $T\in(0,\varepsilon)\setminus\mathcal{T}_{\mathrm{bad}}$,
so that $Z_T\cap Z_q=\varnothing$.

\medskip\noindent\textbf{Step 1: characterization of bad gains.}
A pair $(u,v)\in\mathbb{R}^m\times\mathbb{R}^{d-1}$ is \emph{bad} if there exist
$s\in\mathbb{C}_{\omega_2}$ and $\eta\in\mathbb{C}^m\setminus\{0\}$ such that
$\eta^\star M_{u,v,T}(s)=0$, i.e.,
\begin{equation}\label{eq:bad_gains_eta}
\begin{cases}
\eta^\star\widehat{q}(s) - K_T(s)\,\eta^\star\widehat{p}_d(s)\,u^\star = 0,\\[0.15cm]
\eta^\star\widehat{p}_j(s) - v_j\,\eta^\star\widehat{p}_d(s) = 0,
\qquad j=1,\ldots,d-1.
\end{cases}
\end{equation}
We claim that $\eta^\star\widehat{p}_d(s)\neq 0$. Indeed, if
$\eta^\star\widehat{p}_d(s)=0$, then~\eqref{eq:bad_gains_eta} gives
$\eta^\star[\widehat{q}(s),\widehat{p}_1(s),\ldots,\widehat{p}_d(s)]=0$,
contradicting~\eqref{eq:ide-rank-ass}. Setting $w := \eta/(\eta^\star\widehat{p}_d(s))$, so that $w^\star\widehat{p}_d(s)=1$,
system~\eqref{eq:bad_gains_eta} becomes
\begin{equation}\label{eq:characterization_bad_s_w}
\begin{cases}
w^\star\widehat{q}(s) = K_T(s)\,u^\star,\\[0.10cm]
w^\star\widehat{p}_j(s) = v_j, \qquad j=1,\ldots,d-1,\\[0.10cm]
w^\star\widehat{p}_d(s) = 1.
\end{cases}
\end{equation}

\medskip\noindent\textbf{Step 2: exclusion of bad $u$ associated with $Z_q$.}
Suppose $s\in Z_q$. Since $Z_T\cap Z_q=\varnothing$, we have $K_T(s)\neq 0$,
so the first equation in~\eqref{eq:characterization_bad_s_w} gives
$u^\star = K_T(s)^{-1}w^\star\widehat{q}(s)$, i.e., $u^\star$ lies in the row space
of $\widehat{q}(s)$.
Since $s\in Z_q$, the matrix $\widehat{q}(s)$ is singular; let
$a\in\ker\widehat{q}(s)\setminus\{0\}$.
For any real $u$ in the row space of $\widehat{q}(s)$, we have $u^\star a=0$,
hence
\[
u^\star\Re(a)=0, \qquad u^\star\Im(a)=0.
\]
Since $a\neq 0$, at least one of $\Re(a)$, $\Im(a)$ is nonzero, so $u$ belongs
to a hyperplane of $\mathbb{R}^m$.
Since $Z_q$ is finite, all bad $u$ associated with points of $Z_q$ are contained
in a finite union of hyperplanes $\mathcal{F}_{\mathrm{bad}}\subsetneq\mathbb{R}^m$.
We choose $u\in\mathbb{R}^m\setminus\mathcal{F}_{\mathrm{bad}}$; with this choice,
no loss of rank occurs at any $s\in Z_q$.

\medskip\noindent\textbf{Step 3: exclusion of the remaining bad $v$.}
For $s\in\mathbb{C}_{\omega_2}\setminus Z_q$, the matrix $\widehat{q}(s)$ is
invertible, and the first equation in~\eqref{eq:characterization_bad_s_w} gives
\begin{equation}\label{eq:w_expression}
w^\star = K_T(s)\,u^\star\widehat{q}^{-1}(s).
\end{equation}
Combined with the normalization $w^\star\widehat{p}_d(s)=1$, this requires
\begin{equation}\label{eq:psi_u_def}
\psi_u(s) := K_T(s)\,u^\star\widehat{q}^{-1}(s)\widehat{p}_d(s) - 1 = 0.
\end{equation}
The function $\psi_u$ is holomorphic on $\mathbb{C}_{\omega_2}\setminus Z_q$
and is not identically zero: by Proposition~\ref{prop:Q_inv_bounded},
$\widehat{q}^{-1}$ and $\widehat{p}_d$ are bounded on
$\mathbb{C}_{\omega_2}\setminus K_{\omega_2}$, while $|K_T(s)|\to 0$ as
$\Re(s)\to+\infty$ or $|\Im(s)|\to+\infty$, so $|\psi_u(s)|\to 1$ in these
limits.
In particular, $\psi_u\not\equiv 0$, its zeros are isolated, and they are all
contained in a compact subset of $\mathbb{C}_{\omega_2}\setminus Z_q$; hence
\[
Z_{\psi_u} := \{s\in\mathbb{C}_{\omega_2}\setminus Z_q : \psi_u(s)=0\}
\]
is finite.
For each $s\in Z_{\psi_u}$, equations~\eqref{eq:w_expression}
and~\eqref{eq:characterization_bad_s_w} force
\[
v_j = K_T(s)\,u^\star\widehat{q}^{-1}(s)\widehat{p}_j(s),
\qquad j=1,\ldots,d-1.
\]
Hence the set of bad gains $v$ is contained in the finite set
\begin{align*}
&\mathcal{L}_{\mathrm{bad}}(u)
\\
&:= \left\{
\begin{pmatrix}
K_T(s)\,u^\star\widehat{q}^{-1}(s)\widehat{p}_1(s)\\
\vdots\\
K_T(s)\,u^\star\widehat{q}^{-1}(s)\widehat{p}_{d-1}(s)
\end{pmatrix}
\in\mathbb{R}^{d-1}
:
s\in Z_{\psi_u}
\right\}.
\end{align*}
We choose $v\in\mathbb{R}^{d-1}\setminus\mathcal{L}_{\mathrm{bad}}(u)$; with
this choice, no loss of rank occurs at any $s\in\mathbb{C}_{\omega_2}\setminus Z_q$.

\medskip\noindent\textbf{Conclusion.}
With
\[
T\in(0,\varepsilon)\setminus\mathcal{T}_{\mathrm{bad}},\quad
u\in\mathbb{R}^m\setminus\mathcal{F}_{\mathrm{bad}},\quad
v\in\mathbb{R}^{d-1}\setminus\mathcal{L}_{\mathrm{bad}}(u),
\]
no $s\in\mathbb{C}_{\omega_2}$ satisfies $\rank M_{u,v,T}(s)<m$,
so $\rank M_{u,v,T}(s)=m$ for all $s\in\mathbb{C}_{\omega_2}$.
\end{proof}
 \begin{rem} \label{rem:valid_rank_reduction}
At each iteration of Algorithm~\ref{algo:gains_rank_reduction}, Propositions~\ref{prop:Z0_fini} and~\ref{prop:Q_inv_bounded}
remain valid, so the rank condition is preserved throughout the reduction.
\end{rem}
\section{Network related results}
\subsection{Proof of Lemma~\ref{lem:cut_odd}}
\label{appendix:proof_lem_cut}
\begin{proof}
{We aim to show how any port-network can be transformed, via a
cutting operation, into one whose associated signed graph
is balanced.
To do so, we cut every subsystem of the port-network using the transform $\mathcal P$ defined for all $w_{i_0} \in L^2((0,1), \R^{n_i+m_i})$, for almost all $x\in (0,1)$, 
\begin{align*}
\mathcal{P} w_{i_0}(x)
:&=
\left(
\frac{1}{\sqrt{2}}\,w_{i_0}\!\left(\frac{x}{2}\right),\;
\frac{1}{\sqrt{2}}\,w_{i_0}\!\left(\frac{x+1}{2}\right)
\right)
\\&=: (w_{i_1}(x), w_{i_2}(x)), \quad \textit{}
\end{align*}
where $w_{i_k} = (w_{i_k}^+, w_{i_k}^-)^\star$ for $k=1,2$.
The inverse is defined as follows
\[
\mathcal{P}^{-1}(w_{i_1},w_{i_2})(x)
=
\begin{cases}
\sqrt{2}\,w_{i_1}(2x), & x\in\bigl(0,\tfrac{1}{2}\bigr),\\[1mm]
\sqrt{2}\,w_{i_2}(2x-1), & x\in\bigl(\tfrac{1}{2},1\bigr).
\end{cases}
\]
Hence $\mathcal{P}$ is bounded, linear, and invertible.
Moreover,
\begin{equation*}
\|\mathcal{P} w_{i_0}\|^2
= \|w_{i_0}\|^2,
\end{equation*}
hence $\mathcal{P}$ is an isometric isomorphism. Under the rescaling $\tilde{x}=2x$ on $(0,\frac{1}{2})$ and
$\tilde{x}=2x-1$ on $(\frac{1}{2},1)$, the two new subsystems $i_1$ and $i_2$
are again of the form~\eqref{eq:system_i_graph} with dimensions $n_{i_0}$
and $m_{i_0}$.
The coupling between $i_1$ and $i_2$ at the midpoint is given by the identity:
\begin{equation}
w_{i_1}^+(1) = w_{i_2}^+(0),
\qquad
w_{i_2}^-(0) = w_{i_1}^-(1).
\end{equation}
Since port $p_{i_1}^{-1}$ (at $x=1$) is connected to port $p_{i_2}^{+1}$
(at $x=0$), the new edge $(i_1,i_2)$ has signature
$\sigma(i_1,i_2)=(-1)(+1)=-1$. 
We now apply this cutting operation to every subsystem $i\in \{1, \cdots, n_g\}$.
The resulting associated signed graph
$\widetilde \graph$ is balanced.
Let $C=i_0|\cdots|i_{q-1}|i_0$ be a cycle of $\graph$. At each vertex $i_r$, denote by $\ell_r$ and $k_r$ the
ports label used by the incoming and outgoing edges of $i_r$,
respectively. By Definition~\ref{def:signature}, the cycle signature is
\[
\sigma(C)=\prod_{r=0}^{q-1}\ell_r k_r=(-1)^{r_C},
\]
where $r_C$ is the number of vertices at which
$\ell_r\neq k_r$. After cutting, the new cycle $\widetilde C$ traverses a
new internal edge, of signature $-1$, exactly at those $r_C$
vertices. Consequently,
\[\sigma(\widetilde C)
=\sigma(C)(-1)^{r_C}
=(-1)^{2r_C}=+1.
\]
Hence every cycle of the resulting graph $\widetilde \graph$ has positive
signature, and the graph is balanced.}
 \end{proof}

By Lemma~\ref{lem:cut_odd}, we may assume without loss of generality that
the signed graph $\mathcal{G}$ is balanced.
\subsection{Proof of Theorem~\ref{thm:graph_main}}
\label{section:proof_thm_graph_main}
\begin{proof}
Under Assumption~\ref{assump:pairwise_consistency}, the signed graph associated
with the port-network~\eqref{eq:system_i_graph}-\eqref{eq:boundary_couplings_graph}
is well defined.
Moreover, by Lemma~\ref{lem:cut_odd}, up to applying finitely many unitary
cutting operators, we may assume the associated signed graph is balanced.
By Lemma~\ref{lem:good_folding}, there exists a good folding choice
$m\in\{-1,+1\}^{n_g}$.
For each subsystem $i$, define the folding operator $\mathcal{F}_m$, for all $w\in L^2((0,1), \R^{n_i+m_i})$
\[
(\mathcal{F}_m w)_i
= \begin{pmatrix}\widetilde{w}_i^+\\ \widetilde{w}_i^-\end{pmatrix},
\]
where
\begin{align*}
m(i)=+1 &\implies
\widetilde{w}_i^+(x):=w_i^+(x),\widetilde{w}_i^-(x):=w_i^-(x),\\
m(i)=-1 &\implies
\widetilde{w}_i^+(x):=w_i^-(1-x),\\&\widetilde{w}_i^-(x):=w_i^+(1-x).
\end{align*}
Thus
\[
\mathcal{F}_m w
= \col(\widetilde{w}_1^+,\dots,\widetilde{w}_{n_g}^+,
       \widetilde{w}_1^-,\dots,\widetilde{w}_{n_g}^-).
\]
The map $\mathcal{F}_m$ is linear; since it consists only of component permutations and spatial reflections $x\mapsto 1-x$, it is bounded on the natural $L^2$-state space. Its inverse is obtained by applying the same operation to the folded components, so $\mathcal{F}_m$ is a bounded linear isomorphism. After folding, the components satisfy
\begin{align*}
m(i)=+1 &\implies \widetilde{w}_i^+\in\mathbb{R}^{n_i},\quad \widetilde{w}_i^-\in\mathbb{R}^{m_i},\\
m(i)=-1 &\implies \widetilde{w}_i^+\in\mathbb{R}^{m_i},\quad \widetilde{w}_i^-\in\mathbb{R}^{n_i}.
\end{align*}
Therefore the total dimensions of $\widetilde{w}^+$ and $\widetilde{w}^-$ are
$
n := \sum_{i:\,m(i)=+1} n_i + \sum_{i:\,m(i)=-1} m_i,~
m := \sum_{i:\,m(i)=+1} m_i + \sum_{i:\,m(i)=-1} n_i,
$
and $n+m = \sum_{i=1}^{n_g}(n_i+m_i)$.
Set
\begin{align*}
\widetilde{w}^+ :&= \col(\widetilde{w}_1^+,\dots,\widetilde{w}_{n_g}^+)\in\mathbb{R}^n,\\
\widetilde{w}^- :&= \col(\widetilde{w}_1^-,\dots,\widetilde{w}_{n_g}^-)\in\mathbb{R}^m,~
\widetilde{w} := \col(\widetilde{w}^+,\widetilde{w}^-).
\end{align*}

The folded in-domain input terms are defined as follows:
\begin{align*}
m(i)=+1 &\implies
\widetilde{h}_i^+(x):=h_i^+(x),~\widetilde{h}_i^-(x):=h_i^-(x),\\
m(i)=-1 &\implies
\widetilde{h}_i^+(x):=h_i^-(1-x),~ \widetilde{h}_i^-(x):=h_i^+(1-x),
\end{align*}
so that
\begin{align*}
\widetilde{h}^+(x) :&= \col(\widetilde{h}_1^+(x),\dots,\widetilde{h}_{n_g}^+(x))
\in\mathbb{R}^{n\times d},\\
\widetilde{h}^-(x) :&= \col(\widetilde{h}_1^-(x),\dots,\widetilde{h}_{n_g}^-(x))
\in\mathbb{R}^{m\times d}.
\end{align*}
The folded velocity (spatially varying here) and coupling matrices are obtained analogously:
\begin{align*}
m(i)=+1 &\implies
\widetilde{\Lambda}_i^\pm(x):=\Lambda_i^\pm(x),\\
&\widetilde{\Sigma}_i^{\ell k}(x):=\Sigma_i^{\ell k}(x),~
\ell,k\in\{+,-\},\\
m(i)=-1 &\implies
\widetilde{\Lambda}_i^+(x):=\Lambda_i^-(1-x), ~
\widetilde{\Lambda}_i^-(x):=\Lambda_i^+(1-x),
\end{align*}
and
\begin{align*}
&\widetilde{\Sigma}_i^{++}(x):=\Sigma_i^{--}(1-x),~
\widetilde{\Sigma}_i^{+-}(x):=\Sigma_i^{-+}(1-x),\\
&\widetilde{\Sigma}_i^{-+}(x):=\Sigma_i^{+-}(1-x),\quad
\widetilde{\Sigma}_i^{--}(x):=\Sigma_i^{++}(1-x).
\end{align*}

Before folding, the boundary equation is (see~\eqref{eq:boundary_couplings_graph})
\[
r = Ky + \Bgraph\,U(t).
\]
The trace of $\mathcal{F}_m$ induces block permutation matrices $P_r(m)$ and
$P_y(m)$ such that $\widetilde{r}=P_r(m)r$ and $\widetilde{y}=P_y(m)y$, giving
\[
\widetilde{r} = \widetilde{K}\,\widetilde{y} + \widetilde{\mathcal{B}}\,U(t),
~
\widetilde{K} := P_r(m)\,K\,P_y(m)^{-1},~
\widetilde{\Bgraph} := P_r(m)\,\Bgraph.
\]
Since $m$ is a good folding choice, all nonzero couplings connect ports with
the same final label, so $\widetilde{K}_{+-}=\widetilde{K}_{-+}=0$.
The folded boundary variables can therefore be ordered as
\[
\widetilde{r} = \begin{pmatrix}\widetilde{w}^+(t,0)\\ \widetilde{w}^-(t,1)\end{pmatrix},
\quad
\widetilde{y} = \begin{pmatrix}\widetilde{w}^-(t,0)\\ \widetilde{w}^+(t,1)\end{pmatrix},
\quad
\widetilde{K} = \begin{pmatrix}Q & 0\\ 0 & R\end{pmatrix}.
\]
Writing \[
\widetilde{\Sigma}^{\ell k}(x)
=
\diag\bigl(
\widetilde{\Sigma}_1^{\ell k}(x),
\dots,
\widetilde{\Sigma}_{n_g}^{\ell k}(x)
\bigr),
\qquad \ell,k\in\{+,-\},
\] and $\widetilde{\mathcal{B}}=\col(B_0,B_1)$, we have
$Q\in\mathbb{R}^{n\times m}$, $R\in\mathbb{R}^{m\times n}$,
$B_0\in\mathbb{R}^{n\times d}$, $B_1\in\mathbb{R}^{m\times d}$.
Consequently, the folded system takes the form~\eqref{eq:hyperbolic_couple}.
\end{proof}
\section{Notations}
\label{appendix:notations}
\begin{table}[H]

              \begin{minipage}{\columnwidth}

                            \begin{center}

                                          \begin{tabular}{ll}

                    $w$ & state of the original system~\eqref{eq:hyperbolic_couple}.\\
                    $U$ & control input.\\
                    $\mathcal T$ & backstepping transform from Lemma~\ref{lem:backsteppin_transform}.\\
                    $\gamma=(\alpha, \beta) =\mathcal T w$ & state of the target system~\eqref{eq:hyperbolic_target_2}\\
                    $\spaceY $ & $L^2((0,1), \R^{dm+1})$\\
                    $\gains$ & controller gains\\

                    $\gainspace$ & controller gains space defined in~\eqref{eq:gainspace}\\

                    $\dmax$ & maximal transport delay defined in~\eqref{eq:retard_max}.\\

                    $X_t$ & state of the IDE~\eqref{eq:ide-input-target}.\\

                    $p_i^{\pm 1}$ & boundary port of a network~\eqref{eq:system_i_graph}.\\

                    $\graph=(\mathcal{V},\mathcal{E},\sigma)$ &  signed graph, see Definition~\ref{def:sign_graph}.\\

                    $(m(1),\dots,m(n_g))$ & folding choice, see Definition~\ref{def:folding_choice}.\\

                    $i_0|\cdots |i_q$ & path on a signed graph.\\

                    $\mathcal{X}_g$ &$ \prod_{i=1}^{n_g}(L^2([0,1],\mathbb{R}))^{n_i+m_i}$.\\

                                                         \bottomrule

                                          \end{tabular}

                            \end{center}

                            \bigskip\centering

              \end{minipage}

    \caption{Notations}

    \label{tab: notations}

\end{table}



\bibliographystyle{IEEEtran}
\bibliography{references}
\end{document}